\providecommand{\examplename}{Example}
\newtheorem{theorem}{Theorem}[section]
\newtheorem{proposition}[theorem]{Proposition}
\newtheorem{corollary}[theorem]{Corollary}
\newtheorem{lemma}[theorem]{Lemma}
\theoremstyle{definition}
\newtheorem{remark}[theorem]{Remark}
\newtheorem*{example*}{\protect\examplename}
\newenvironment{example}
    {\pushQED{\qed}\examplex}
    {\popQED\endexamplex}
\pgfplotsset{compat=1.16}
\appto{\bibsetup}{\sloppy}
\newcommand\N{\mathbb{N}}
\newcommand\Q{\mathbb{Q}}
\newcommand\R{\mathbb{R}}
\newcommand\E{\mathds{E}}
\newcommand\p{\mathds{P}}
\newcommand\1{\mathds{1}}
\newcommand\eqd{\overset{d}{=}}
\newcommand\da{\downarrow}
\newcommand\cid{\xrightarrow{d}}
\newcommand\Var{\mathrm{Var}}
\newcommand\supp{\mathrm{supp}}
\newcommand\co{\mathsf{c}}
\newcommand\nf[1]{\normalfont{#1}}
\newcommand{\D}{\mathrm{d}}
\newcommand{\ov}[1]{\overline{#1}}
\newcommand{\opnorm}[1]{\|{#1}\|_{\text{op}}}
\newcommand{\wt}[1]{\widetilde{#1}}
\newcommand{\wh}[1]{\widehat{#1}}
\newcommand{\trace}{\mathrm{tr}}
\newcommand\Oh{\mathcal{O}}
\newcommand{\Loc}{L^1_{\mathrm{loc}}}
\newcommand{\Leb}{\text{\normalfont Leb}}
\newcommand{\mW}{\mathcal{W}}
\renewcommand{\le}{\leqslant}
\renewcommand{\ge}{\geqslant}
\newcommand{\ve}{\varepsilon}
\title[Multivariate CLT for L\'evy processes: rates without moment assumptions]{Multivariate CLT for L\'evy processes: convergence rates without moment assumptions}
\author{Jorge Gonz\'alez C\'azares$^{*}$, David Kramer-Bang$^\dag$ \& Aleksandar Mijatovi\'c$^{\ddag}$}
\address{$^*$IIMAS--UNAM, MX. $^\dag$Department of Mathematics, Aarhus University, DK. $^\ddag$Department of Statistics, University of Warwick \& The Alan Turing Institute, UK.}
\email{$^*$jorge.gonzalez@sigma.iimas.unam.mx}
\email{$^\dag$bang@math.au.dk}
\email{$^\ddag$a.mijatovic@warwick.ac.uk}
\begin{document}

\begin{abstract}
We prove that the norm of a $d$-dimensional L\'evy process possesses a finite second moment if and only if the convex distance between an appropriately rescaled process at time $t$ and a standard Gaussian vector is integrable in time with respect to the scale-invariant measure $t^{-1}\D t$ on $[1,\infty)$. We further prove that under the standard $\sqrt{t}$-scaling,  the corresponding convex distance is integrable if and only if the norm of the L\'evy process has a finite $(2+\log)$-moment. Both equivalences also hold for the integrability with respect to $t^{-1}\D t$ of the multivariate Kolmogorov distance. 
Our results imply: 
(I) polynomial Berry-Esseen bounds on the rate of convergence in the convex distance in the CLT for L\'evy processes cannot hold without finiteness of $(2+\delta)$-moments for some $\delta>0$ and (II)~integrability of the convex distance with respect to $t^{-1}\D t$ in the domain of non-normal attraction cannot occur for any scaling function.
\end{abstract}

\subjclass[2020]{60F05; 60G51}

\keywords{Central Limit Theorem, multivariate L\'evy process, Convex and Kolmogorov distances, Convergence rates}

\maketitle

\section{Introduction and main results}
\label{sec:intro} 

Let $\bm{X}=(\bm{X}_t)_{t\ge0}$ be a $d$-dimensional L\'evy process with zero mean and finite second moment, and assume that the support of $\bm{X}
$ is $\R^d$. Then, the variance-covariance matrix of $\bm{X}_1$, given by $\bm{\sigma}^2\coloneqq\E[\bm{X}_1\bm{X}_1^\intercal]$, is non-degenerate and $\bm{\sigma}$ is its unique symmetric $d \times d$ matrix square root. Under these assumptions, the standard multivariate central limit theorem (CLT) states that $\bm{X}_t/\sqrt{t} \cid \bm{\sigma}\bm{Z}$ as $t\to\infty$, where $\bm{Z}$ is a $d$-dimensional standard Gaussian random vector. Since the limit law is absolutely continuous, it is well known~\cite{RaoWeak}  (see also Theorem~\ref{thm:appendix_conv_weak} in Appendix~\ref{sec:appendix} below) that the convergence in distribution is equivalent to convergence in the convex and Kolmogorov distances $d_\mathscr{C}(\bm{X}_t/\sqrt{t}, \bm{\sigma}\bm{Z} )\to 0$ and $d_\mathscr{K}(\bm{X}_t/\sqrt{t}, \bm{\sigma}\bm{Z} )\to 0$ as $t\to\infty$, defined via
\begin{equation*}
d_\mathscr{A}\big(\bm{\xi}, \bm{\zeta} \big)
\coloneqq \sup_{A \in \mathscr{A}}\big|\p\big(\bm{\xi}\in A\big)
-\p(\bm{\zeta}\in A)\big|,\qquad\mathscr{A}\in\{\mathscr{C},\mathscr{K}\},
\end{equation*} 
where $\mathscr{C}\coloneqq\{A \in \mathcal{B}(\R^d):A\text{ is convex}\}$ denotes the set of all convex Borel subsets $\mathcal{B}(\R^d)$ of $\R^d$ and $\mathscr{K}\coloneqq\{(-\infty,x_1]\times\cdots\times(-\infty,x_d]:x_1,\ldots,x_d \in \R\}$ denotes the set of all hyper-rays. 

Despite these equivalences, convergence in a given metric is often not sufficiently informative without a quantification of the speed at which the convergence occurs. Assuming a finite $(2+\delta)$-moment (for some $\delta>0$) of the norm $|\bm{X}_1|$, the multivariate Berry-Esseen inequalities (see, e.g.,~\cite{MR1106283,MR4003566,MR236979}, \cite[\S V.3,~Thm~5]{PetrovSumIndep} for a general one-dimensional result and~\cite{MR3693963,MR4583674} for thorough literature reviews in~$\R^d$ with special care to the dependence on the dimension $d$) provide explicit bounds on the distance $d_\mathscr{C}$ (and thus $d_\mathscr{K}\le d_\mathscr{C}$, since hyper-rays are convex $\mathscr{K}\subset\mathscr{C}$). 
Moreover, the control over the rate of convergence is stronger for larger $\delta$. 
However, the standard Berry-Esseen type bounds deteriorate and become arbitrarily slow as $\delta\da 0$. It is thus desirable to understand the speed of convergence without imposing assumptions beyond  $\E[|\bm{X}_1|^2]<\infty$, leading to the first main question addressed in this paper.

\paragraph{\textbf{Question I}} What is the relationship between the finite variance assumption $\E[|\bm{X}_1|^2]<\infty$ and the rate of convergence in the convex and Kolmogorov distances $d_\mathscr{C}$ and
$d_{\mathscr{K}}$  in the multivariate CLT?

\subsection{The  variance is finite 
if and only if  the scaled distance {\texorpdfstring{$d_\mathscr{C}$}{d\_C}} is integrable at infinity}
\label{subsec:first-moment}
Let $d\in\N\coloneqq \{1,2,\ldots\}$
and denote by $|\cdot|$ the Euclidean norm on $\R^d$, i.e. $|\bm{v}|^2=\sum_{i=1}^d v_i^2$ for any $\bm{v}=(v_1,\cdots,v_d)^\intercal \in \R^d$.
Let $\{\bm{e}_1,\ldots,\bm{e}_d\}$ be the canonical base of $\R^d$ and $\bm{I}_d\in\R^{d\times d}$  the identity matrix. A function $f:(0,\infty) \to (0,\infty)$ is said to be locally integrable at $+\infty$, i.e. $f \in \Loc(+\infty)$, if $\int_M^\infty f(x)\D x<\infty$ for some $M>0$. 
We now state our main result for genuinely $d$-dimensional L\'evy processes (cf. Remark~\ref{rem:int-0}(iii) below).

\begin{theorem}\label{thm:int-0}
Let $\bm{X}$ be 
a genuinely $d$-dimensional L\'evy process and $\bm{Z}$ a  standard Gaussian random vector in $\R^d$. 
Let $\mathscr{A}$ be either $\mathscr{K}$ or $\mathscr{C}$ defined above. Then the following conditions are equivalent.
\begin{itemize}[leftmargin=2.5em, nosep]
\item[{\nf(a)}] 
 $\E[|\bm{X}_1|^2]<\infty$.
\item[{\nf(b)}] There exist measurable $\bm{A}:[1,\infty) \to \R^{d}$ and $\bm{B}:[1,\infty) \to \R^{d\times d}$, such that $\bm{B}(t)$ is invertible for all sufficiently large $t$, the limits $\bm{e}_j^\intercal \bm{B}(t)^\intercal \bm{B}(t)\bm{e}_j \to \infty$ and $\bm{B}(t)^{-1}\bm{B}(f(t))\to \bm{I}_d$ hold for all $j\in\{1,\ldots,d\}$ and non-decreasing functions $f$ with $f(t)/t\to 1$ as $t \to \infty$, respectively, and 
\[
t \mapsto t^{-1}d_\mathscr{A}(\bm{X}_t-\bm{A}(t),\bm{B}(t) \bm{Z} )\in \Loc(+\infty).
\]
\end{itemize}
Moreover,  for any function $\bm{B}$ satisfying condition~{\nf{(b)}}, we have $\lim_{t\to\infty}d_\mathscr{C}(\bm{X}_t-t\E\bm{X}_1,\bm{B}(t) \bm{Z} )=0$.
\end{theorem}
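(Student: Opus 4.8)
The plan is to show that any $\bm{B}$ satisfying~(b) is, up to asymptotic equivalence, a legitimate normalising sequence for the CLT, and then to invoke the finite-variance conclusion from~(a)$\Leftrightarrow$(b) together with the standard $\sqrt t$-CLT recalled in the introduction. First I would use the equivalence (a)$\Leftrightarrow$(b) already established in the theorem: since (b) holds, $\E[|\bm{X}_1|^2]<\infty$, so $\bm X$ has a well-defined mean $\E\bm X_1$ and, after centering, the classical multivariate CLT gives $(\bm X_t-t\E\bm X_1)/\sqrt t\cid\bm\sigma\bm Z$ with $\bm\sigma^2=\E[(\bm X_1-\E\bm X_1)(\bm X_1-\E\bm X_1)^\intercal]$. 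By the Rao-type equivalence cited from~\cite{RaoWeak} (Theorem~\ref{thm:appendix_conv_weak}), this is the same as $d_\mathscr{C}((\bm X_t-t\E\bm X_1)/\sqrt t,\bm\sigma\bm Z)\to0$, i.e. $d_\mathscr{C}(\bm X_t-t\E\bm X_1,\tsqrt t\,\bm\sigma\bm Z)\to0$.

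Next I would compare $\bm B(t)$ with the canonical scaling $\tsqrt t\,\bm\sigma$. The integrability in~(b) forces $t^{-1}d_\mathscr{A}(\bm X_t-\bm A(t),\bm B(t)\bm Z)\to0$ along a subsequence, in fact (by local integrability plus the fact that the integrand is essentially slowly varying in $\log t$) one expects $d_\mathscr{A}(\bm X_t-\bm A(t),\bm B(t)\bm Z)\to0$ outright; combined with the displayed $\sqrt t$-CLT this yields that the two Gaussian laws $\mathrm{Law}(\bm B(t)\bm Z)$ and $\mathrm{Law}(\tsqrt t\,\bm\sigma\bm Z)$, after matching centers, must converge to each other, whence $\bm B(t)^{-1}\tsqrt t\,\bm\sigma\to\bm Q$ for some orthogonal $\bm Q$ and $\bm A(t)-t\E\bm X_1\to0$; the hypotheses $\bm e_j^\intercal\bm B(t)^\intercal\bm B(t)\bm e_j\to\infty$ and the regularity condition $\bm B(t)^{-1}\bm B(f(t))\to\bm I_d$ are exactly what is needed to rule out pathological oscillation and to pin down that $\bm B(t)$ is regularly varying of index $1/2$ in the relevant sense. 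Here the key point is that a convergent family of non-degenerate Gaussians with norms tending to infinity in every coordinate direction has a unique (up to orthogonal transformation) normaliser; this is a standard convergence-of-types argument, which I would carry out via characteristic functions of $\bm B(t)\bm Z$.

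Finally, once $\bm B(t)\bm Z$ and $\tsqrt t\,\bm\sigma\bm Z$ are shown to be of the same type asymptotically, I would conclude by writing, for $A\in\mathscr C$,
\[
\p(\bm X_t-t\E\bm X_1\in A)-\p(\bm B(t)\bm Z\in A)
=\big[\p(\bm X_t-t\E\bm X_1\in A)-\p(\tsqrt t\,\bm\sigma\bm Z\in A)\big]
+\big[\p(\tsqrt t\,\bm\sigma\bm Z\in A)-\p(\bm B(t)\bm Z\in A)\big],
\]
taking $\sup_{A\in\mathscr C}$ on both sides: the first bracket tends to $0$ by the displayed $\sqrt t$-CLT in $d_\mathscr C$, and the second tends to $0$ because $d_\mathscr C$ between two Gaussians is continuous in their (non-degenerate) covariance and mean parameters, which converge to one another by the previous step. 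The main obstacle I anticipate is the convergence-of-types step: extracting from the mere local integrability in~(b) that $d_\mathscr{A}(\bm X_t-\bm A(t),\bm B(t)\bm Z)\to0$ (not just along a sparse sequence) and then upgrading pointwise-subsequential Gaussian convergence to a clean statement $\bm B(t)^{-1}\tsqrt t\,\bm\sigma\to\bm Q$ uniformly, using precisely the hypotheses on $\bm e_j^\intercal\bm B(t)^\intercal\bm B(t)\bm e_j$ and on $\bm B(t)^{-1}\bm B(f(t))$; everything else is soft. Note that the final conclusion is stated only for $d_\mathscr C$ (hence automatically for $d_\mathscr K$), so no separate argument for hyper-rays is needed.
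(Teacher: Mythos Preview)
Your overall architecture is the same as the paper's: use (b)$\Rightarrow$(a) to get a finite second moment, invoke the classical CLT with scaling $\sqrt{t}\,\bm\sigma$, then run a convergence-of-types argument to identify $\bm B(t)$ with $\sqrt{t}\,\bm\sigma$ asymptotically, and finish by the triangle inequality and Theorem~\ref{thm:appendix_conv_weak}. However, the step you flag as ``the main obstacle'' is genuinely the whole content, and your plan for it does not work as written. Local integrability of $t\mapsto t^{-1}d_\mathscr{A}(\bm X_t-\bm A(t),\bm B(t)\bm Z)$ does \emph{not} imply that the integrand is slowly varying in $\log t$ or that it tends to $0$ outright; a priori nothing prevents oscillation. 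Your claim ``$\bm A(t)-t\E\bm X_1\to 0$'' is also false as stated: only the normalised quantity $\bm B(t)^{-1}(\bm A(t)-t\E\bm X_1)$ can be controlled.

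The paper resolves this obstacle by first projecting onto an arbitrary unit vector $\bm v$, setting $B_{\bm v}(t)=|\bm v^\intercal\bm B(t)|$, and \emph{symmetrising} the one-dimensional projection $X^{\bm v}=\bm v^\intercal\bm X$ to eliminate the centering $\bm A$ altogether (Lemma~\ref{lem:split-K-dist}). In the one-dimensional symmetric setting, Lemma~\ref{lem:integrable-sequence-to0} extracts from the integrability a subsequence $t_n\to\infty$ with $t_{n+1}/t_n\to 1$ along which the Kolmogorov distance tends to $0$. This special geometric density of the subsequence, combined with the hypothesis $B(t)^{-1}B(f(t))\to 1$, is exactly what is needed in Lemma~\ref{lem:reg_vary_mu+finitemoment} to prove (via characteristic functions and~\cite[Thm~1.9.2]{BinghamBook}) that the log-characteristic function $\mu(x)=-\log\varphi_{Y_1}(x)$ is regularly varying at $0$ with index $2$, and that $B$ is an asymptotic inverse of $x\mapsto 1/(2\mu(1/x))$. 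Regular variation of $\mu$ is what upgrades subsequential to full convergence $B_{\bm v}(t)^{-1}Y_t\cid Z$. Only \emph{then} does the paper run the one-dimensional convergence-of-types argument (Lemma~\ref{lem:asymptotic-uniqueness}) to get $B_{\bm v}(t)\sim\sqrt{t}\,\varsigma$ and conclude $B_{\bm v}(t)^{-1}(X^{\bm v}_t-t\E X^{\bm v}_1)\cid Z$ for every $\bm v$, which is equivalent to the $d$-dimensional weak limit and hence to the claimed $d_\mathscr{C}$-limit. So the missing idea in your proposal is the Lemma~\ref{lem:integrable-sequence-to0}/\ref{lem:reg_vary_mu+finitemoment} mechanism: a dense subsequence plus the L\'evy--Khintchine structure to force regular variation, which then delivers full convergence. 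The paper's reduction to $d=1$ and symmetrisation also sidesteps the multivariate convergence-of-types and centring issues you would otherwise have to confront directly.
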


\begin{remark}
\label{rem:int-0}
Let us comment on assumptions and conclusions of Theorem~\ref{thm:int-0}.\\
(i) Note that $d_\mathscr{C}(\bm{X}_t-\bm{A}(t), \bm{B}(t)\bm{Z} )=d_\mathscr{C}(\bm{B}(t)^{-1}(\bm{X}_t-\bm{A}(t)), \bm{Z} )$, which is not the case for $d_\mathscr{K}$ if $d\ge 2$ unless $\bm{B}(t)$ is an invertible diagonal matrix.\\
(ii) The limit $\bm{e}_j^\intercal \bm{B}(t)^\intercal \bm{B}(t) \bm{e}_j \to \infty$  holds if the smallest eigenvalue $\lambda_{\min}(\bm{B}(t)^\intercal \bm{B}(t))$ of the positive definite matrix $\bm{B}(t)^\intercal \bm{B}(t)$ diverges: 
$
\bm{e}_j^\intercal \bm{B}(t)^\intercal \bm{B}(t) \bm{e}_j 
\ge \inf_{|\bm{v}|=1}\bm{v}^\intercal \bm{B}(t)^\intercal \bm{B}(t) \bm{v}
=\lambda_{\min}(\bm{B}(t)^\intercal \bm{B}(t))$.\\
(iii) The process $\bm{X}$ is genuinely $d$-dimensional if and only if $\supp(\langle \bm{w},\bm{X}_1 \rangle) \ne \{0\}$  for all $\bm{w} \in\R^d\setminus\{ \bm{0}\}$.\footnote{By~\cite[Prop.~24.17(i)]{SatoBookLevy}, $\bm{X}$ is genuinely $d$-dimensional (see~\cite[Def.~24.18]{SatoBookLevy}) if there exists no proper linear subspace $V$ of $\R^d$ such that the following conditions hold: $\{\bm{\Sigma}\bm{w}:\bm{w} \in \R^d\} \subset V$ and  $\supp(\nu)\subset V$ and $\bm{\gamma} \in V$.} In fact, if $|\bm{X}_1|$ has finite variance, $\bm{X}$ is genuinely $d$-dimensional if and only if the symmetric matrix $$\bm{\sigma}^2\coloneqq\bm{\Sigma}+\int_{\R^d\setminus\{ \bm{0}\}}\bm{v}\bm{v}^\intercal\nu(\D \bm{v})=\E[(\bm{X}_1-\E\bm{X}_1)(\bm{X}_1-\E\bm{X}_1)^\intercal]\qquad\text{has full rank,}$$
where $(\bm{\Sigma},\bm{\gamma},\nu)$ is the generating triplet of $\bm{X}$ with the L\'evy measure $\nu$,  the non-negative definite covariance matrix $\bm{\Sigma}$ of the Gaussian component of $\bm{X}$ and  a parameter  $\bm{\gamma}\in\R^d$ (see~\cite[Def.~8.2]{SatoBookLevy}).\\
(iv) If $\bm{X}$ is genuinely $d$-dimensional and $\E[|\bm{X}_1|^2]<\infty$, the proof of Theorem~\ref{thm:int-0} (see Theorem~\ref{thm:int-1} below) implies that the functions 
\begin{equation}
    \label{eq:A_c&B_c}
\bm{A}_c(t)\coloneqq t\E[\bm{X}_1]\qquad \& \qquad\bm{B}_c(t)\coloneqq \sqrt{t}\bm{\Delta}(t), 
\end{equation}
where  $\bm{\Delta}(t)\coloneqq\sqrt{\bm\Sigma(t)}$ and $\bm{\Sigma}(t)\coloneqq \bm{\Sigma}+\int_{\mathfrak{B}_{\bm{0}}(\kappa\sqrt{t})}\bm{v}\bm{v}^\intercal \nu(\D \bm{v})$
for some
$\kappa\in[1,\infty)$ sufficiently large so that $\bm{\Sigma}(1)$ has full rank,\footnote{$\sqrt{\bm{M}}$ denotes the unique positive semi-definite square root of a positive semi-definite matrix $\bm{M}\in\R^{d\times d}$.} satisfy 
$t \mapsto t^{-1}d_\mathscr{A}(\bm{X}_t-\bm{A}_c(t),\bm{B}_c(t) \bm{Z} )\in \Loc(+\infty)$. Here and throughout $\mathfrak{B}_{\bm{0}}(r)$ denotes the ball around the origin in $\R^d$ with radius $r>0$.\\
(v) The proof of the implication (a)$\implies$(b) of Theorem~\ref{thm:int-0} establishes a non-asymptotic bound on the distance $d_\mathscr{A}(\bm{X}_t-\bm{A}_c(t),\bm{B}_c(t) \bm{Z})$ in terms of the first three truncated moments of the L\'evy measure of $\bm{X}$.
\end{remark}

The process $\bm{X}$
is said to be in the \emph{domain of attraction} (DoA) of the standard Gaussian random vector~$\bm{Z}$ in~$\R^d$ if there exist measurable functions $\bm{A}:[1,\infty)\to\R^d$ and $\bm{B}:[1,\infty)\to\R^{d\times d}$ such that $\bm{B}(t)$ is a positive definite symmetric matrix and the weak limit $\bm{B}(t)^{-1}(\bm{X}_t-\bm{A}(t))\cid \bm{Z}$ as $t\to\infty$ holds. In line with the one-dimensional theory of~Khintchine~\cite{Khintchine1935} and Gnedenko--Kolmogorov~\cite{GnedenkoKolmogorov1954}, we say that a L\'evy process $\bm{X}$ in the DoA of $\bm{Z}$ is in the \emph{domain of normal attraction} (DoNA) if $\limsup_{t\to\infty}t^{-1/2}\trace(\bm{B}(t))<\infty$; 
otherwise, $\bm{X}$ is in the \emph{domain of non-normal attraction} (DoNNA). The following well-known result (proved in Appendix~\ref{sec:appendix} below) distinguishes between these two domains of attraction in terms of the second moment of $\bm{X}$.


\begin{proposition}
\label{prop:DoA}
If a L\'evy process $\bm{X}$ is in the DoA of $\bm{Z}$ in~$\R^d$, then $\E[|\bm{X}_1|^p]<\infty$ for any $p\in[0,2)$. 
A genuinely $d$-dimensional L\'evy process $\bm{X}$ is in DoNA of $\bm{Z}$ 
 if and only if $\E[|\bm{X}_1|^2]<\infty$. 
\end{proposition}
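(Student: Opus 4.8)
The plan is to translate the weak convergence $\bm B(t)^{-1}(\bm X_t-\bm A(t))\cid\bm Z$ into pointwise conditions on the generating triplet $(\bm\Sigma,\bm\gamma,\nu)$ of $\bm X$ via the continuity theorem for infinitely divisible laws (\cite[Thm~8.7]{SatoBookLevy}), and then to read off both assertions from the moment equivalence $\E[|\bm X_1|^p]<\infty\iff\int_{|\bm v|>1}|\bm v|^p\,\nu(\D\bm v)<\infty$ (\cite[Thm~25.3]{SatoBookLevy}).

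For the first statement I would argue as follows. The L\'evy measure of $\bm B(t)^{-1}(\bm X_t-\bm A(t))$ is the pushforward of $t\nu$ under $\bm v\mapsto\bm B(t)^{-1}\bm v$; since the Gaussian limit has trivial L\'evy measure, the continuity theorem forces $t\,\nu(\{\bm v:|\bm B(t)^{-1}\bm v|>\ve\})\to0$ for every $\ve>0$, and since $|\bm B(t)^{-1}\bm v|\ge|\bm v|/\opnorm{\bm B(t)}$ this gives $t\,\nu(\{|\bm v|>\ve\opnorm{\bm B(t)}\})\to0$. A convergence-of-types argument (apply the continuity theorem to $\bm X_{2t}\eqd\bm X_t+\bm X_t'$ and use that the limit is non-degenerate) shows $\opnorm{\bm B(t)}$ is regularly varying of index $1/2$, so $\opnorm{\bm B(t)}=\oh(t^{1/2+\delta})$ for every $\delta>0$; combining this with the previous display and the monotonicity of $x\mapsto\nu(\{|\bm v|>x\})$ yields $\nu(\{|\bm v|>x\})=\oh(x^{-2+\delta})$ for every $\delta>0$, whence $\int_{|\bm v|>1}|\bm v|^p\,\nu(\D\bm v)<\infty$ and $\E[|\bm X_1|^p]<\infty$ for all $p<2$.

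For the dichotomy, sufficiency is immediate: if $\bm X$ is genuinely $d$-dimensional with $\E[|\bm X_1|^2]<\infty$, then $\bm\sigma^2$ has full rank by Remark~\ref{rem:int-0}(iii), so $\bm\sigma$ is positive definite and the classical multivariate CLT gives $(\bm X_t-t\E\bm X_1)/\sqrt t\cid\bm\sigma\bm Z$; taking $\bm B(t)=\sqrt t\,\bm\sigma$ makes $t^{-1/2}\trace(\bm B(t))=\trace(\bm\sigma)$ bounded, so $\bm X$ is in the DoNA. For necessity I would proceed thus. Suppose $\bm X$ is genuinely $d$-dimensional and in the DoNA of $\bm Z$; since $\opnorm{\bm B}\le\trace(\bm B)$ for positive definite $\bm B$, the DoNA condition gives $\opnorm{\bm B(t)}\le C\sqrt t$ eventually, hence $t\,\nu(\{|\bm v|>\delta\sqrt t\})\to0$ for all $\delta>0$ by the previous paragraph. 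Moreover, applying genuine $d$-dimensionality to the symmetrisation $\bm X-\bm X'$---whose characteristic exponent dominates a quadratic near the origin---together with the convergence-of-types normalisation yields the matching lower bound $\lambda_{\min}(\bm B(t))\ge c\sqrt t$ eventually, so the condition number of $\bm B(t)$ stays bounded. The remaining (Doeblin) part of the continuity theorem says that the Gaussian covariance of $\bm B(t)^{-1}(\bm X_t-\bm A(t))$ plus its truncated small-jump covariance tends to $\bm I_d$; as the condition number is bounded, one may replace the canonical ellipsoidal truncation region $\{\bm v:|\bm B(t)^{-1}\bm v|\le1\}$ by the ball $\mathfrak{B}_{\bm0}(\sqrt t)$ with error vanishing by the tail bound above, turning this into $t\,\bm B(t)^{-1}\bm\Sigma(\sqrt t)\bm B(t)^{-\intercal}\to\bm I_d$ with $\bm\Sigma(x)\coloneqq\bm\Sigma+\int_{|\bm v|\le x}\bm v\bm v^\intercal\,\nu(\D\bm v)$. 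Taking traces and bounding $\trace(\bm\Sigma(\sqrt t)\bm B(t)^{-2})\ge\trace(\bm\Sigma(\sqrt t))/\opnorm{\bm B(t)}^2\ge\trace(\bm\Sigma(\sqrt t))/(C^2t)$ gives $\sup_{t}\trace(\bm\Sigma(\sqrt t))<\infty$; since $\bm\Sigma(x)$ is non-decreasing in the positive-semidefinite order, $\trace(\bm\Sigma)+\int_{\R^d\setminus\{\bm0\}}|\bm v|^2\,\nu(\D\bm v)=\sup_{x>0}\trace(\bm\Sigma(x))<\infty$, i.e.\ $\E[|\bm X_1|^2]<\infty$.

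The main obstacle will be that the normalisation $\bm B(t)$ is matrix- rather than scalar-valued, so one cannot simply reduce to the one-dimensional Gnedenko--Kolmogorov--Feller theory by projecting onto fixed directions: all the regular-variation bookkeeping for $\opnorm{\bm B(t)}$ and $\lambda_{\min}(\bm B(t))$ must be carried out at the operator level, and the ellipsoidal truncation regions $\{|\bm B(t)^{-1}\bm v|\le1\}$ must be swapped for Euclidean balls---a step that works precisely because in the DoNA the hypothesis $\opnorm{\bm B(t)}=\Oh(\sqrt t)$, together with genuine $d$-dimensionality (through the lower bound on $\lambda_{\min}(\bm B(t))$), keeps the condition number of $\bm B(t)$ bounded.
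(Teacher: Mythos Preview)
Your approach is genuinely different from the paper's and, while the overall strategy is sound, two of the load-bearing steps are not justified as stated.

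\textbf{What the paper does instead.} The paper sidesteps all matrix-level regular-variation bookkeeping by projecting onto the fixed coordinate directions $\bm e_j$. Writing $\bm B(t)=\bm P(t)^\intercal\bm\Lambda(t)\bm P(t)$ and using $\bm P(t)\bm Z\eqd\bm Z$, one finds that each one-dimensional L\'evy process $X^j=\bm e_j^\intercal\bm X$ is in the DoA of $Z$ with scalar normalisation $|\bm P_j^\intercal(t)\bm\Lambda(t)|$. Classical one-dimensional theory then gives: (i) the truncated second moment $L_j(x)=\E[(X^j_1)^2\1_{\{|X^j_1|\le x\}}]$ is slowly varying, from which $\E[|X^j_1|^p]<\infty$ for $p<2$ follows by Potter's bound; and (ii) if $L_j(x)\to\infty$ then, by Feller, any admissible normalisation must satisfy $n^{-1/2}|\bm P_j^\intercal(n)\bm\Lambda(n)|\to\infty$, whereas $|\bm P_j^\intercal(n)\bm\Lambda(n)|\le\trace(\bm\Lambda(n))=\trace(\bm B(n))\le C\sqrt n$ under DoNA --- a contradiction. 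The point is that projecting onto \emph{fixed} directions yields fixed one-dimensional L\'evy processes, so the well-developed scalar DoA theory applies directly.

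\textbf{Gaps in your argument.} First, the claim that $\opnorm{\bm B(t)}$ is regularly varying of index $1/2$ is not a consequence of multivariate convergence of types as you describe it: the relation $\bm B(2t)^{-1}\bm B(t)\to\bm C$ with $\bm C\bm C^\intercal=\tfrac12\bm I_d$ does not give $\opnorm{\bm B(t)}/\opnorm{\bm B(2t)}\to 2^{-1/2}$, since the operator norm is not multiplicative and the eigenspaces of $\bm B(t)$ rotate with $t$. Without this, your passage from $t\,\ov\nu(\ve\opnorm{\bm B(t)})\to0$ to a tail bound of the form $\ov\nu(x)=\oh(x^{-2+\delta})$ breaks down, because you cannot invert $x=\ve\opnorm{\bm B(t)}$ to bound $t$ below by a power of $x$. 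Second, your bootstrap for $\lambda_{\min}(\bm B(t))\ge c\sqrt t$ via the quadratic lower bound $\Re\psi(\bm u)\ge c|\bm u|^2$ (which, incidentally, \emph{is} a correct consequence of genuine $d$-dimensionality once one picks $R$ with $\bm\Sigma(R)$ positive definite) only applies when $\bm B(t)^{-1}\bm u$ lies in the small-$|\bm u|$ regime where that bound holds; you therefore need to first establish $\lambda_{\min}(\bm B(t))\to\infty$, and your sketch does not address the possibility that $\lambda_{\min}(\bm B(t_n))$ stays bounded or tends to zero along a subsequence. Both gaps are fixable with additional work (e.g.\ compactness of unit eigenvectors and the fact that $\Re\psi>0$ on $\R^d\setminus\{0\}$ rule out bounded $\lambda_{\min}$), but as written the argument is incomplete, and the paper's coordinate-projection route is both shorter and avoids these issues entirely.
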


By Proposition~\ref{prop:DoA}, 
any L\'evy process $\bm{X}$ in DoNNA has infinite second moment $\E[|\bm{X}_1|^2]=\infty$ and thus $\int_{\R^d\setminus \mathfrak{B}_{\bm{0}}(1) }|\bm{v}|^2\nu(\D \bm{v}) =\infty$. Theorem~\ref{thm:int-0} implies the following ``hard lower bound'' on the rate of convergence in the Kolmogorov and convex distances for L\'evy processes in the DoNNA of $\bm{Z}$: the distance cannot be upper bounded by a function that is integrable with respect to $t^{-1}\D t$ on $[1,\infty)$.


\begin{corollary}
\label{cor:hard-lower-bound}
Suppose $\bm{X}$ is in the domain of non-normal attraction of $\bm{Z}$. Then  for any measurable $\bm{A}:[1,\infty) \to \R^{d}$ and $\bm{B}:[1,\infty) \to \R^{d\times d}$, such that $\bm{B}(t)$ is invertible for all sufficiently large $t$, we have $\bm{e}_j^\intercal \bm{B}(t)^\intercal \bm{B}(t)\bm{e}_j \to \infty$ and $\bm{B}(t)^{-1}\bm{B}(f(t))\to \bm{I}_d$ for all $j\in\{1,\ldots,d\}$ and non-decreasing functions $f$ with $f(t)/t\to 1$ as $t \to \infty$, respectively, it holds that
\[
t \mapsto t^{-1}d_\mathscr{K}(\bm{X}_t-\bm{A}(t),\bm{B}(t) \bm{Z} )\notin \Loc(+\infty).
\]
\end{corollary}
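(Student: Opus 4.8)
The plan is to prove Corollary~\ref{cor:hard-lower-bound} by contradiction, invoking the implication (b)$\Rightarrow$(a) of Theorem~\ref{thm:int-0} together with Proposition~\ref{prop:DoA}. Fix arbitrary measurable $\bm{A}:[1,\infty)\to\R^d$ and $\bm{B}:[1,\infty)\to\R^{d\times d}$ with the properties listed in the statement, and suppose, towards a contradiction, that $t\mapsto t^{-1}d_\mathscr{K}(\bm{X}_t-\bm{A}(t),\bm{B}(t)\bm{Z})\in\Loc(+\infty)$. I would then observe that the hypotheses on $\bm{B}$ (invertibility for large $t$, the divergence $\bm{e}_j^\intercal\bm{B}(t)^\intercal\bm{B}(t)\bm{e}_j\to\infty$, and $\bm{B}(t)^{-1}\bm{B}(f(t))\to\bm{I}_d$ for every $j$ and every non-decreasing $f$ with $f(t)/t\to1$) are exactly those required by condition~(b) of Theorem~\ref{thm:int-0} with $\mathscr{A}=\mathscr{K}$. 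Hence Theorem~\ref{thm:int-0} yields condition~(a), namely $\E[|\bm{X}_1|^2]<\infty$.

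It remains to contradict this using that $\bm{X}$ is in the DoNNA of $\bm{Z}$. Here the one point that needs care is that Theorem~\ref{thm:int-0} and the second assertion of Proposition~\ref{prop:DoA} both presuppose $\bm{X}$ to be genuinely $d$-dimensional, whereas Corollary~\ref{cor:hard-lower-bound} does not state this explicitly. I would close this gap by noting that membership in the DoA of the \emph{nondegenerate} Gaussian $\bm{Z}$ already forces genuine $d$-dimensionality: if $\supp(\langle\bm{w},\bm{X}_1\rangle)=\{0\}$ for some $\bm{w}\neq\bm{0}$, i.e. $\langle\bm{w},\bm{X}_t\rangle=0$ a.s. for all $t\ge0$, then for a DoA-normalising pair $(\bm{A}_0,\bm{B}_0)$ the scalar $\langle\bm{B}_0(t)^\intercal\bm{w},\,\bm{B}_0(t)^{-1}(\bm{X}_t-\bm{A}_0(t))\rangle=-\langle\bm{w},\bm{A}_0(t)\rangle$ is deterministic; normalising $\bm{B}_0(t)^\intercal\bm{w}$ (nonzero, since $\bm{B}_0(t)$ is invertible) and passing to a subsequence of unit vectors converging on the sphere, Slutsky's theorem forces a one-dimensional projection of $\bm{Z}$ to be a.s. constant, which is absurd. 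Consequently $\bm{X}$ is genuinely $d$-dimensional, and Proposition~\ref{prop:DoA} gives that a genuinely $d$-dimensional process in the DoA of $\bm{Z}$ that is \emph{not} in the DoNA must have $\E[|\bm{X}_1|^2]=\infty$.

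Combining the two steps: the contradiction hypothesis forces $\E[|\bm{X}_1|^2]<\infty$, while $\bm{X}$ being in the DoNNA forces $\E[|\bm{X}_1|^2]=\infty$. Hence no pair $(\bm{A},\bm{B})$ with the stated properties can make $t\mapsto t^{-1}d_\mathscr{K}(\bm{X}_t-\bm{A}(t),\bm{B}(t)\bm{Z})$ locally integrable at $+\infty$, which is the assertion of the corollary. The argument is short, and its only non-routine ingredient is the genuine-$d$-dimensionality observation in the second step; everything else is bookkeeping that lines the hypotheses of Corollary~\ref{cor:hard-lower-bound} up with condition~(b) of Theorem~\ref{thm:int-0}. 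If one instead regards genuine $d$-dimensionality as a standing assumption whenever $\bm{X}$ is placed in the DoA of a standard Gaussian vector, that step can be omitted entirely.
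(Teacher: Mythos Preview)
Your proposal is correct and follows exactly the paper's approach: the corollary is stated immediately after the paper notes that, by Proposition~\ref{prop:DoA}, DoNNA implies $\E[|\bm X_1|^2]=\infty$, so the contrapositive of Theorem~\ref{thm:int-0} (b)$\Rightarrow$(a) gives the conclusion. Your extra step verifying that membership in the DoA of the nondegenerate Gaussian $\bm Z$ forces genuine $d$-dimensionality is a valid and useful observation that the paper leaves implicit; your Slutsky/subsequence argument for it is sound.
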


For any $\bm{X}$ in the DoNNA of $\bm{Z}$ and scaling matrix $\bm{B}$ with $d_\mathscr{K}(\bm{B}(t)^{-1}(\bm{X}_t-t\E[\bm{X}_1]), \bm{Z} )\to0$, the trace $\trace(\bm{B}(t))$ tends to infinity as $t\to\infty$ faster than any multiple of $\sqrt{t}$. 
However, by Corollary~\ref{cor:hard-lower-bound},  the Kolmogorov and convex distances from $\bm{Z}$ are not integrable with respect to $t^{-1}\D t$ at infinity.

For $\bm{X}$ in the DoNA, the scaling matrix $\bm{B}_c(t)=\sqrt{t}\bm{\Delta}(t)$ depends on the L\'evy measure of $\bm{X}$ and time $t$ in a nontrivial way. 
Since the scaling matrix has a finite limit $\bm{\Delta}(t)\to\bm{\sigma}$ as $t\to\infty$, by Theorem~\ref{thm:int-0} for $\mathscr{A}\in\{\mathscr{C},\mathscr{K}\}$, we get
\[
d_\mathscr{A}\big((\bm{X}_t-t\E\bm{X}_1)/\sqrt{t},\bm{\sigma}\bm{Z}\big)
\le d_\mathscr{A}\big((\bm{X}_t-t\E\bm{X}_1)/\sqrt{t},\bm{\Delta}(t)\bm{Z}\big)
+d_\mathscr{A}(\bm{\Delta}(t)\bm{Z},\bm{\sigma}\bm{Z})\to0\quad\text{as $t\to\infty$,}
\]
suggesting the following natural question for any L\'evy process $\bm{X}$ in the DoNA of $\bm{Z}$.

\smallskip
\smallskip

\paragraph{\textbf{Question II}} Is the scaled distance  $t \mapsto t^{-1}d_\mathscr{A}((\bm{X}_t-t\E\bm{X}_1)/\sqrt{t},\bm{\sigma} \bm{Z} )$ in the CLT locally integrable at infinity for the Kolmogorov $\mathscr{A}=\mathscr{K}$ or convex $\mathscr{A}=\mathscr{C}$ metrics?

\subsection{Integrability of the scaled distance in the \texorpdfstring{$\sqrt{t}$-}{}CLT is equivalent to \texorpdfstring{$(2+\log)$}{(2+log)}-moments}
\label{subsec:2+log-mom}
The answer to Question II is in general no. The following theorem presents the complete characterisation of the local integrability at infinity of the scaled distance in the CLT.

\begin{theorem}
\label{thm:int-2}
Let $\bm{X}$ and $\bm{Z}$ be as in Theorem~\ref{thm:int-0} and assume $\E[|\bm{X}_1|^2]<\infty$. Suppose $\mathscr{A}\subset\mathscr{C}$ and that
there exists $U\in\mathcal{B}(\R)$ such that 
$U^d\in\mathscr{A}$ and $\p(Z\in U)^d\ne\E[Z^2\1_U(Z)]$ for  a standard Gaussian random variable $Z$ in $\R$. Then $\E[|\bm{X}_1|^2\max\{0,\log(|\bm{X}_1|)\}]<\infty$ if and only if
\begin{equation}
\label{eq:int-2}
t \mapsto t^{-1}d_\mathscr{A}\big((\bm{X}_t-t\E\bm{X}_1)/\sqrt{t},\bm{\sigma}\bm{Z}\big)\in \Loc(+\infty).
\end{equation}
\end{theorem}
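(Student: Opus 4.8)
The plan is to reduce the $\sqrt{t}$-scaled CLT to the flexible-scaling result already available in Theorem~\ref{thm:int-0} (in the form of its proof, i.e.\ Theorem~\ref{thm:int-1}), and then to estimate the discrepancy between the natural time-dependent scaling $\bm{B}_c(t)=\sqrt{t}\bm{\Delta}(t)$ and the fixed scaling $\sqrt{t}\bm{\sigma}$. Concretely, I would start from the triangle inequality
\begin{equation*}
\bigl|d_\mathscr{A}\bigl((\bm{X}_t-t\E\bm{X}_1)/\sqrt{t},\bm{\sigma}\bm{Z}\bigr)-d_\mathscr{A}\bigl((\bm{X}_t-t\E\bm{X}_1)/\sqrt{t},\bm{\Delta}(t)\bm{Z}\bigr)\bigr|\le d_\mathscr{A}(\bm{\Delta}(t)\bm{Z},\bm{\sigma}\bm{Z}).
\end{equation*}
Since by Remark~\ref{rem:int-0}(iv) the first term differs from $t^{-1}d_\mathscr{A}(\bm{X}_t-\bm{A}_c(t),\bm{B}_c(t)\bm{Z})$ only by the harmless rescaling $\bm{B}_c(t)/\sqrt{t}=\bm{\Delta}(t)$ (which is legitimate for $d_\mathscr{C}$ always, and for $\mathscr{A}=\mathscr{K}$ because $\bm{\Delta}(t)$ and $\bm{\sigma}$ commute in the relevant sense once one works in the eigenbasis — this is where the hypothesis $\mathscr{A}\subset\mathscr{C}$ together with the one-dimensional test set $U$ enters), the local integrability at infinity of the left-hand CLT distance is equivalent, modulo the $t^{-1}$ weight, to the local integrability of $t\mapsto t^{-1}d_\mathscr{A}(\bm{\Delta}(t)\bm{Z},\bm{\sigma}\bm{Z})$. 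Thus the theorem reduces to showing that
\begin{equation*}
t\mapsto t^{-1}d_\mathscr{A}(\bm{\Delta}(t)\bm{Z},\bm{\sigma}\bm{Z})\in\Loc(+\infty)\iff \E[|\bm{X}_1|^2\max\{0,\log|\bm{X}_1|\}]<\infty.
\end{equation*}

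The core of the argument is therefore a sharp two-sided estimate of the Gaussian-vs-Gaussian distance $d_\mathscr{A}(\bm{\Delta}(t)\bm{Z},\bm{\sigma}\bm{Z})$ in terms of the matrix difference $\bm{\sigma}^2-\bm{\Sigma}(t)=\int_{|\bm{v}|>\kappa\sqrt t}\bm{v}\bm{v}^\intercal\,\nu(\D\bm{v})=:\bm{R}(t)$, a positive semidefinite matrix with $\opnorm{\bm{R}(t)}\to0$. For the upper bound one expands: since $\bm\Delta(t)^2=\bm\sigma^2-\bm R(t)$, a first-order perturbation gives $d_\mathscr{C}(\bm\Delta(t)\bm Z,\bm\sigma\bm Z)=\Oh(\|\bm\sigma^{-2}\bm R(t)\|)=\Oh(\|\bm R(t)\|)$, using that the Gaussian density and its derivative are bounded and that $\bm\sigma^2$ is fixed and nondegenerate. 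For the lower bound, the hypothesis on $U$ is exactly what guarantees that the leading-order term does not vanish: testing against the set $U^d\in\mathscr A$ (or rather its image under an orthogonal change of coordinates diagonalising $\bm R(t)$) and using $\p(Z\in U)^d\ne\E[Z^2\1_U(Z)]$ forces $d_\mathscr{A}(\bm\Delta(t)\bm Z,\bm\sigma\bm Z)\ge c\,\trace(\bm R(t))$ for some $c>0$ and all large $t$. Hence $d_\mathscr{A}(\bm\Delta(t)\bm Z,\bm\sigma\bm Z)\asymp\trace(\bm R(t))=\int_{|\bm{v}|>\kappa\sqrt t}|\bm{v}|^2\,\nu(\D\bm v)$ up to constants.

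The final step is the calculus identity, via Fubini/Tonelli,
\begin{equation*}
\int_1^\infty \frac{1}{t}\Bigl(\int_{|\bm v|>\kappa\sqrt t}|\bm v|^2\,\nu(\D\bm v)\Bigr)\D t
=\int_{|\bm v|>\kappa}|\bm v|^2\Bigl(\int_1^{|\bm v|^2/\kappa^2}\frac{\D t}{t}\Bigr)\nu(\D\bm v)
=2\int_{|\bm v|>\kappa}|\bm v|^2\log(|\bm v|/\kappa)\,\nu(\D\bm v),
\end{equation*}
which is finite if and only if $\int|\bm v|^2\max\{0,\log|\bm v|\}\,\nu(\D\bm v)<\infty$, and the latter is equivalent to $\E[|\bm{X}_1|^2\max\{0,\log|\bm{X}_1|\}]<\infty$ by the standard relation between moments of $|\bm X_1|$ and truncated moments of $\nu$ (the finite-variance assumption already in force makes the small-jump and Gaussian parts irrelevant). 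Combining the $\asymp$ with this identity yields the claimed equivalence. I expect the main obstacle to be the lower bound: one must extract a genuinely positive lower-order term from $d_\mathscr{A}$ uniformly in the direction of the dominant eigenvector of $\bm R(t)$ — this is precisely why the somewhat technical hypothesis about the existence of the coordinate set $U$ with $\p(Z\in U)^d\ne\E[Z^2\1_U(Z)]$ is imposed, and checking that it indeed produces a non-degenerate leading coefficient (rather than an accidental cancellation across coordinates or eigendirections) is the delicate point; the upper bound and the Fubini computation are routine by comparison.
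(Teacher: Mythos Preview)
Your overall architecture is exactly the paper's: reduce via the triangle inequality (Theorem~\ref{thm:int-1} controls $t^{-1}d_\mathscr{A}(\bm{X}_t/\sqrt t,\bm{\Delta}(t)\bm{Z})$), so the question becomes whether $t\mapsto t^{-1}d_\mathscr{A}(\bm{\Delta}(t)\bm{Z},\bm{\sigma}\bm{Z})\in\Loc(+\infty)$; then show this Gaussian--Gaussian distance is comparable to a trace of the tail matrix; then finish with the Fubini identity you wrote, which is precisely the paper's~\eqref{eq:equiv_integrabil_2+log}. Your parenthetical worry about rescaling for $\mathscr{A}=\mathscr{K}$ is unnecessary: since $\mathscr{A}\subset\mathscr{C}$, one has $d_\mathscr{A}\le d_\mathscr{C}$, and the triangle inequality is applied directly in $d_\mathscr{A}$ without any rescaling step.

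The gap is in your lower bound. You propose to test against $U^d$ ``or rather its image under an orthogonal change of coordinates diagonalising $\bm{R}(t)$''. But $\mathscr{A}$ is not assumed closed under rotations (e.g.\ $\mathscr{K}$ is not), so the rotated set need not lie in $\mathscr{A}$, and you cannot use it. Testing against the fixed set $U^d$ without rotation does not obviously give a clean leading coefficient either, because the first-order variation of $\p(\bm{A}\bm{Z}\in U^d)$ in $\bm{A}$ depends on the \emph{direction} of $\bm{R}(t)$ in matrix space, which in turn depends on $\nu$. The paper avoids diagonalisation altogether: it interpolates the \emph{inverse} covariances, $\bm{M}(s,t)=s\bm{\sigma}^{-1}+(1-s)\bm{\Delta}(t)^{-1}$, writes $\p(\bm{M}(s,t)^{-1}\bm{Z}\in V)$ for a fixed set $V=\bm{\sigma}U^d$, and applies the mean value theorem in $s$. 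Differentiating the Gaussian density yields \emph{two} pieces: a Jacobian/determinant term and a quadratic-form term. Lemma~\ref{lem:det_lemma} shows the determinant derivative is $\sim -\det(\bm{\sigma})\trace(\bm{I}_d-\bm{M}(t))$, and the quadratic-form term is handled via cyclic trace manipulations together with Lemma~\ref{lem:sq_of_PSD_matrices}. The upshot is that the two pieces have asymptotic leading coefficients $c_1=-\p(Z\in U)^d$ and $c_2=\E[Z^2\1_U(Z)]$, and the hypothesis $c_1+c_2\ne 0$ is exactly what prevents cancellation between them. This is the precise mechanism behind the assumption, and it does not require choosing a test set that tracks the eigendirections of $\bm{R}(t)$.

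A secondary point: the paper does not work with $\trace(\bm{R}(t))=\trace(\bm{\sigma}^2-\bm{\Sigma}(t))$ directly but with $\trace(\bm{\sigma}-\bm{\Delta}(t))$ and $\trace(\bm{I}_d-\bm{M}(t))$. These are all mutually comparable (see~\eqref{eq:b1_b2} and~\eqref{eq:Id-M_vs_sigma_Delta}), so your $\asymp\trace(\bm{R}(t))$ is correct in spirit; just be aware that passing from $\bm{\sigma}^2-\bm{\Delta}(t)^2$ to $\bm{\sigma}-\bm{\Delta}(t)$ in the matrix setting requires care (noncommuting square roots), which is why the paper isolates Lemma~\ref{lem:sq_of_PSD_matrices}.
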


Note that, by~\cite[Thm~25.3 \& Prop.~25.4]{SatoBookLevy}, the condition $\E[|\bm{X}_1|^2\max\{0,\log(|\bm{X}_1|)\}]<\infty$ is equivalent to $\int_{\R^d \setminus \mathfrak{B}_{\bm{0}}(1)} |\bm{v}|^2\log(|\bm{v}|)\nu(\D \bm{v})<\infty$. Furthermore, note that the set $U=(-\infty,-1]$ satisfies 
\[
\p(Z_1\in U)^d
=\bigg(\int_{-\infty}^{-1}\frac{e^{-x^2/2}}{\sqrt{2\pi}}\D x\bigg)^d<
\int_{-\infty}^{-1}\frac{e^{-x^2/2}}{\sqrt{2\pi}}x^2\D x
=\E[Z_1^2\1_U(Z_1)]
\]
for any $d\ge 1$. In particular, the assumptions on $\mathscr{A}$ in Theorem~\ref{thm:int-2} are satisfied if $\mathscr{K}\subseteq\mathscr{A}\subseteq\mathscr{C}$. In Example~\ref{ex:2+log} below, we construct a L\'evy process $\bm{X}$ that satisfies the second moment condition $\E[|\bm{X}_1|^2]<\infty$ in Theorem~\ref{thm:int-0}, but not the integrability condition in Theorem~\ref{thm:int-2}, and an explicit scaling matrix $\bm{\Delta}(t)$ with a limit (as $t\to\infty$) proportional to the identity $\bm{I}_d$.

Theorems~\ref{thm:int-0} and~\ref{thm:int-2} show that replacing $\bm\Delta(t)$ with its limit $\bm\sigma$ in the scaling function may affect the convergence rate, possibly making the distance $d_{\mathscr{A}}$ non-integrable with respect to the measure $t^{-1}\D t$ on $[1,\infty)$ in the DoNA of the standard normal distribution on $\R^d$. Moreover, since 
the Berry-Esseen type bound yields polynomial decay of the convex distance, Theorem~\ref{thm:int-2} implies that 
at least $(2+\log)$-moments of $|\bm{X}_1|$ are necessary for such an estimate to hold. This suggests an explanation for the deterioration of the Berry-Esseen type bounds under $(2+\delta)$-moment assumption as $\delta\da 0$ (see, e.g., the lower bounds for one-dimensional random walks in~\cite{MR602378}). More specifically, Theorem~\ref{thm:int-2} implies that the upper bound  on the Kolmogorov distance $d_\mathscr{K}\big((\bm{X}_t-t\E\bm{X}_1)/\sqrt{t},\bm{\sigma}\bm{Z}\big)$ proportional to $1/\log t$,  suggested by~\cite[\S V.3,~Thm~5]{PetrovSumIndep} in the one-dimensional case under the $(2+\log)$-moment assumption, is not optimal since $t\mapsto 1/(t\log t)\notin\Loc(+\infty)$.

\begin{example}
\label{ex:2+log}
We construct a L\'evy process $\bm{X}$ with $\E[|\bm{X}_1|^2\max\{0,\log(|\bm{X}_1|)\}]=\infty>\E[|\bm{X}_1|^2]$, and give explicitly its centering and scaling functions $\bm{A}(t)$ and $\bm{B}(t)$. Let $\bm{X}$ be a L\'evy process with the generating triplet $(\bm{0},\bm{0},\nu)$, where $\nu(\D \bm{v})=\1_{\R^d\setminus \mathfrak{B}_{\bm{0}}(\varsigma)}(\bm{v})|\bm{v}|^{-2-d}\log(|\bm{v}|)^{-2}\D \bm{v}$ for some $\varsigma>1$. Recall that $\E[|\bm{X}_1|^2\max\{0,\log(|\bm{X}_1|)\}]=\infty$ is equivalent to $\int_{\R^d \setminus \mathfrak{B}_{\bm{0}}(1)} |\bm{v}|^2\log(|\bm{v}|)\nu(\D \bm{v})=\infty$. This integral can be evaluated using spherical coordinates:
\begin{align*}
    \int_{\R^d \setminus \mathfrak{B}_{\bm{0}}(1)} |\bm{v}|^2\log(|\bm{v}|)\nu(\D \bm{v})&=\int_{\R^d \setminus \mathfrak{B}_{\bm{0}}(\varsigma)}|\bm{v}|^{-d}\log(|\bm{v}|)^{-1}\D \bm{v}
    =C_d\int_{\varsigma}^\infty \frac{\D r}{r\log(r)}=\infty,
\end{align*} 
where $C_d>0$ is a constant that only depends on $d$. 
Since $r \mapsto 1/(r\log^2(r))$ is integrable on $(\varsigma,\infty)$,
a similar argument based on spherical coordinates yields $\E[|\bm{X}_1|^2]<\infty$. Since the L\'evy process is isotropic, it has no centering $\bm{A}(t)=0$. Moreover, for any $\kappa>\varsigma$, the scaling matrix takes the form  $\bm{B}(t)=\sqrt{t}\bm{\Delta}(t)$, where $\bm{\Delta}(t)$ is the unique symmetric $d \times d$ matrix satisfying
\begin{equation*}
    \bm{\Delta}(t)\bm{\Delta}(t)^\intercal  
    =\int_{ \mathfrak{B}_{\bm{0}}(\kappa\sqrt{t})\setminus \mathfrak{B}_{\bm{0}}(\varsigma)}\frac{\bm{v}\bm{v}^\intercal }{|\bm{v}|^{2+d}\log(|\bm{v}|)^{2}}\D \bm{v}
    =C_d\bigg(\frac{1}{\log(\varsigma)}-\frac{1}{\log(\kappa\sqrt{t})}\bigg)\bm{I}_d.
\end{equation*} 
Hence $\bm{\Delta}(t)$ a time-varying multiple of the identity matrix $\bm{I}_d$.
\end{example}

\subsection{Main contributions and related literature}
\label{subsec:literature}

The two main contributions of the present paper are the following characterisations: \textbf{(i)} the integrability (with respect to  the scale-invariant measure $t^{-1}\D t$ on $[1,\infty)$) of either the convex or multidimensional Kolmogorov distance under an appropriate scaling of a general L\'evy process $\bm{X}$ in $\R^d$ is \textit{equivalent} to the  existence of its second moment (see Theorem~\ref{thm:int-0}); \textbf{(ii)} the integrability of these distances under the classical $\sqrt{t}$-scaling  is \textit{equivalent} to the norm $|\bm{X}_1|$ possessing $(2+\log)$-moments (see Theorem~\ref{thm:int-2}); see also~\cite{YouTube_talk} for 
a short YouTube video describing the two main contributions and elements of proofs.

 The famous classical result of Friedman, Katz and Koopmans for random walks~\cite{FriedmanKatzCLT} was extended in~\cite{MR4285923} to one-dimensional L\'evy processes by showing 
that, for appropriately normalised variables, the Kolmogorov distance is integrable against  the measure $t^{-1}\D t$ at infinity under the second-moment assumption. This results is a special case of the implication (Theorem~\ref{thm:int-0}: (a)$\implies$(b)) in~\textbf{(i)}, whose proof in $\R^d$ requires a multidimensional generalisation of a limit theorem  for Le\'vy processes in~\cite{MR1834755} (see Section~\ref{subsec:strategy} below for more details). The reverse implication 
(Theorem~\ref{thm:int-0}: (b)$\implies$(a)) in~\textbf{(i)},
stating that the integrability at infinity against  $t^{-1}\D t$ of the convex distance implies the finiteness of the second moment, is inspired by the classical results of Heyde~\cite{MR243636,MR334308} for one-dimensional random walks. The generalisation to continuous time poses significant technical difficulties. Indeed, the summability assumption in discrete time~\cite{MR243636,MR334308} implies that the Kolmogorov distance must tend to zero, while in continuous time this cannot be deduced directly from the integrability assumption, making the proof of the implication Theorem~\ref{thm:int-0}: (b)$\implies$(a) perhaps the deepest contribution of the paper.
To the best of our knowledge, Theorem~\ref{thm:int-0} is the first result in a general multidimensional continuous-time setting that gives (without moment assumptions) an equivalence between the integrability of the convex distance and the finiteness of variance of the L\'evy process.

Contribution~\textbf{(ii)} is concerned with the convergence rate in the convex and multidimensional Kolmogorov distances under the classical $\sqrt{t}$-scaling.
In this case, the integrability of the distance is characterised in Theorem~\ref{thm:int-2} under the second moment assumption only. 
Such equivalence for one-dimensional L\'evy processes 
follows easily from the integrability of the Kolmogorov distance in~\textbf{(i)} and the mean value theorem 
(see~\cite[Thm~1.2]{MR4285923}).
While a multivariate extension of such a result is  expected to hold, technical difficulties abound, making the proof of Theorem~\ref{thm:int-2} much more delicate.  
This is the case, for instance, because of the possibly complicated dependence structure that components of a multidimensional processes may exhibit or because the scaling functions differ between coordinates. At a technical level, this complication manifests itself by requiring us to work with vectors and matrices, each with their own norm, whose analytical and topological properties may vary substantially from those of the corresponding univariate objects. As with Theorem~\ref{thm:int-0}, the proof of Theorem~\ref{thm:int-2} requires a multivariate extension of the classical limit theorem for L\'evy processes in the small-time regime~\cite{MR1834755}.

The convex distance bounds the multivariate Kolmogorov distance $d_{\mathscr{K}}\le d_{\mathscr{C}}$ as $\mathscr{K} \subset \mathscr{C}$ in any dimension $d$. If $d=1$, the metrics are equivalent (since $d_{\mathscr{K}}\le d_{\mathscr{C}}\le 2d_{\mathscr{K}}$), which is not the case for $d>1$ (see e.g. Example~\ref{ex:C_vs_K} in Appendix~\ref{sec:appendix} below). 
Furthermore, the Kolmogorov distance in $d=1$ is always equal to the difference of probabilities on an interval of the form $(-\infty,x]$ or $(-\infty,x)$ for some $x\in\R$, while an analogous property for the distance $d_\mathscr{C}$ in $d>1$ is not evident, making a direct extension of the proofs in~\cite{MR4285923} to multiple dimensions infeasible.
Likewise, the applications of the mean value theorem in~\cite{MR4285923} for $d=1$ have no direct extension to $d>1$. Such difficulties also arose in the multivariate extensions of the  Berry-Esseen inequality, see e.g., the proofs of~\cite[Thms~1~\&~2]{MR236979} as well as~\cite{MR3693963,MR4583674}, wherein the dependence on dimension $d$ is crucial. In this context, Theorems~\ref{thm:int-0} \&~\ref{thm:int-2} present hard limits to the bounds that can be established without higher-moment assumptions for \emph{any}~$d$. 



In~\cite{WassersteinPaper} it was recently shown that, for L\'evy processes in the small-time domain of non-normal attraction of a stable law, minor modifications to the slowly varying part of the scaling function could significantly affect the convergence speed in the Wasserstein distance. (This is the case even if the modified slowly varying function remains in the same asymptotic equivalence class.) We stress that the phenomena documented in~\cite{WassersteinPaper} relied on the fact that the slowly varying part of the scaling function did not have a positive finite limit, i.e. for L\'evy processes in the DoNNA. In contrast, Theorems~\ref{thm:int-0} and~\ref{thm:int-2} demonstrate that such phenomena may occur even in the domain of normal attraction. It is plausible that such phenomena may also occur in the stable DoNA considered in~\cite{WassersteinPaper}. However, establishing such results would require techniques beyond those developed in~\cite{WassersteinPaper}.

\subsection{Strategy for the proofs}
\label{subsec:strategy}

The main ingredients to prove the implication  $\mathrm{(a)}\implies\mathrm{(b)}$
in Theorem~\ref{thm:int-0} are the multivariate Berry-Esseen theorem applied to the L\'evy process with truncated jumps, an application of an extension of a mean value theorem in multiple dimensions and a generalisation of a limit theorem~\cite[Lem.~3.1]{MR1834755} in the small-time regime (see Theorem~\ref{thm:int-1} and its proof below for details). Theorem~\ref{thm:int-2} is also proved using Theorem~\ref{thm:int-1}, requiring in addition an application of a multivariate mean value theorem for matrix interpolation and the cyclic invariance of the trace operator. 

The proof of the implication  $\mathrm{(b)}\implies\mathrm{(a)}$ in Theorem~\ref{thm:int-0} relies on a reduction to a one-dimensional problem and an argument  showing that 
convergence in distribution cannot hold if the second moment of $|\bm{X}_1|$ is infinite  (see Theorem~\ref{thm:int-3} in Section~\ref{sec:proof_Thm_3.1} and its proof below for details). The main idea goes back to the classical but little know work of Heyde~\cite{MR334308,MR243636} for one-dimensional random walks. 

\section{Proofs of Theorem~\ref{thm:int-2} and of the implication {\texorpdfstring{$\mathrm{(a)}\!\!\implies\!\!\mathrm{(b)}$}{(a) -> (b)}} in Theorem~\ref{thm:int-0}}
\label{sec:proofs}

Since $\mathscr{K} \subset \mathscr{C}$, the implication $\mathrm{(a)}\!\!\implies\!\!\mathrm{(b)}$ of Theorem~\ref{thm:int-0} will follow from Theorem~\ref{thm:int-1} below (applied to the L\'evy process $(\bm{X}_t-t\E\bm{X}_1)_{t\ge0}$). It will also play a key role in the proof of the equivalence in Theorem~\ref{thm:int-2}. Its proof requires a multivariate extension of the arguments in the proof of~\cite[Thm~1.1]{MR4285923}.

\begin{theorem}\label{thm:int-1}
Let $\bm{X}=(\bm{X}_t)_{t\ge0}$ be a genuinely $d$-dimensional L\'evy process with zero mean and finite second moment, and $\bm{Z}$ be a $d$-dimensional standard Gaussian vector. 
Pick $\kappa\ge 1$ such that $\bm{\Sigma}(t)\coloneqq 
\bm{\Sigma}+\int_{\mathfrak{B}_{\bm{0}}(\kappa\sqrt{t})}\bm{v}\bm{v}^\intercal \nu(\D \bm{v})$ has full rank for $t\ge 1$. Then
\begin{equation}
\label{eq:int-1}
t \mapsto t^{-1}d_\mathscr{C}\big(\bm{X}_t/\sqrt{t}, \bm{\Delta}(t)\bm{Z} \big)\in \Loc(+\infty),
\quad\text{where}\quad
\bm{\Delta}(t)\coloneqq\sqrt{\bm{\Sigma}(t)}.
\end{equation}  
\end{theorem}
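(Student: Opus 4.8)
The plan is to decompose the Lévy process at each time $t$ into a "small-jumps + Gaussian" part and a "large-jumps" part relative to the threshold $\kappa\sqrt{t}$, apply the multivariate Berry--Esseen theorem to the small part, and then control the error from removing the large jumps. Concretely, fix $t\ge 1$ and write $\bm{X}_t = \bm{Y}_t + \bm{W}_t$, where $\bm{Y}_t$ is the Lévy process at time $t$ with Lévy measure restricted to $\mathfrak{B}_{\bm 0}(\kappa\sqrt t)$ (plus the Gaussian component) and $\bm W_t$ collects the jumps of size exceeding $\kappa\sqrt t$; note $\bm Y$ and $\bm W$ are independent, $\bm Y_t$ has zero mean and covariance $\bm\Sigma(t)$, and $\E[|\bm W_t|\ne 0]\le \p(N_t\ge 1)$ where $N_t$ is Poisson with mean $t\,\nu(\mathfrak B_{\bm 0}(\kappa\sqrt t)^{\co})$. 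Since $\mathscr C$ is invariant under the affine map $\bm v\mapsto \bm\Delta(t)^{-1}\bm v$, we have $d_\mathscr{C}(\bm X_t/\sqrt t,\bm\Delta(t)\bm Z)=d_\mathscr{C}(\bm\Delta(t)^{-1}\bm X_t/\sqrt t,\bm Z)$, which we bound by $d_\mathscr{C}(\bm\Delta(t)^{-1}\bm Y_t/\sqrt t,\bm Z)$ plus a term accounting for $\bm W_t$.

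For the first term, the multivariate Berry--Esseen inequality (applied to the normalised sum of $n$ i.i.d.\ copies of the increment of $\bm Y$ over $[0,t/n]$, then letting $n\to\infty$, or directly to the infinitely divisible law) gives a bound of the form $C_d\,\rho_t$, where $\rho_t$ is built from the third absolute moment of $\bm\Sigma(t)^{-1/2}\bm Y_t$. Using $\bm Y_t = \bm Y^{(1)}+\cdots+\bm Y^{(n)}$ in law with $\bm Y^{(k)}$ i.i.d., the relevant quantity reduces to $t\,\E|\bm\Sigma(t)^{-1/2}\bm\xi|^3$-type expressions where $\bm\xi$ ranges over jumps bounded by $\kappa\sqrt t$, plus the Gaussian contribution; carefully, one gets $\rho_t \lesssim t^{-3/2}\opnorm{\bm\Sigma(t)^{-1/2}}^{3}\big(\trace(\bm\Sigma)^{3/2}+\int_{\mathfrak B_{\bm 0}(\kappa\sqrt t)}|\bm v|^3\nu(\D\bm v)\big)$. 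The second term, from discarding $\bm W_t$, is handled by conditioning: on $\{N_t=0\}$ the laws of $\bm X_t$ and $\bm Y_t$ coincide, so $d_\mathscr{C}(\bm X_t/\sqrt t,\bm\Delta(t)\bm Z)\le d_\mathscr{C}(\bm Y_t/\sqrt t,\bm\Delta(t)\bm Z)+\p(N_t\ge 1)$, and $\p(N_t\ge 1)\le t\,\nu(\mathfrak B_{\bm 0}(\kappa\sqrt t)^{\co})$.

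It remains to show $t^{-1}$ times the resulting bound is integrable at $+\infty$. For the large-jump term, $\int_M^\infty t^{-1}\cdot t\,\nu(\mathfrak B_{\bm 0}(\kappa\sqrt t)^{\co})\,\D t = \int_M^\infty \nu(|\bm v|>\kappa\sqrt t)\,\D t$, which by Tonelli equals $\kappa^{-2}\int|\bm v|^2\wedge(\cdot)\,\nu(\D\bm v)$ up to the cutoff, finite since $\int_{|\bm v|>1}|\bm v|^2\nu(\D\bm v)<\infty$. For the Berry--Esseen term, the key analytic input is the asymptotics of $\bm\Sigma(t)$: since $\bm X$ has finite second moment, $\bm\Sigma(t)\to\bm\sigma^2$, hence $\opnorm{\bm\Sigma(t)^{-1/2}}=\Oh(1)$, and we are left to check $\int_M^\infty t^{-1}\cdot t^{-1/2}\int_{\mathfrak B_{\bm 0}(\kappa\sqrt t)}|\bm v|^3\nu(\D\bm v)\,\D t<\infty$; by Tonelli this is a constant times $\int|\bm v|^3\big(\int_{|\bm v|^2/\kappa^2}^\infty t^{-3/2}\D t\big)\nu(\D\bm v)\lesssim \int_{|\bm v|>1}|\bm v|^2\nu(\D\bm v)<\infty$, with the small-jump part $\int_{|\bm v|\le 1}|\bm v|^3\nu(\D\bm v)\le\int_{|\bm v|\le 1}|\bm v|^2\nu(\D\bm v)<\infty$ contributing a finite constant over any finite $t$-range.

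The main obstacle I anticipate is making the multivariate Berry--Esseen step rigorous for the infinitely divisible law $\bm Y_t$ while tracking exactly how the bound depends on $\bm\Sigma(t)$ and on the truncation level---in particular justifying that $\opnorm{\bm\Sigma(t)^{-1/2}}$ stays bounded (uniform non-degeneracy for $t\ge 1$ follows from the full-rank assumption on $\bm\Sigma(1)$ and monotonicity $\bm\Sigma(s)\preceq\bm\Sigma(t)$ for $s\le t$) and that the limiting procedure $n\to\infty$ in the i.i.d.\ decomposition commutes with the convex-distance bound. This is precisely where a multivariate version of the small-time limit theorem \cite[Lem.~3.1]{MR1834755} enters, as flagged in Section~\ref{subsec:strategy}; the remaining Tonelli/Fubini computations are routine once the bound is in place.
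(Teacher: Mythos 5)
Your decomposition, the Berry--Esseen step via the i.i.d.\ splitting of the truncated process combined with the multivariate small-time limit theorem, the uniform control of $\opnorm{\bm{\Sigma}(t)^{-1/2}}$ by monotonicity, and the Tonelli computations all mirror the paper's argument. There is, however, one genuine gap: you assert that $\bm{Y}_t$ ``has zero mean'', and this is false. The full process is centred, but removing the jumps of magnitude exceeding $\kappa\sqrt{t}$ destroys the centring: since $\E\bm{W}_t=t\int_{\R^d\setminus\mathfrak{B}_{\bm{0}}(\kappa\sqrt{t})}\bm{v}\,\nu(\D\bm{v})$ need not vanish, the truncated process has mean $\bm{\mu}_t=-t\int_{\R^d\setminus\mathfrak{B}_{\bm{0}}(\kappa\sqrt{t})}\bm{v}\,\nu(\D\bm{v})\ne\bm{0}$ in general. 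The Berry--Esseen inequality therefore compares $\bm{\Delta}(t)^{-1}(\bm{Y}_t-\bm{\mu}_t)/\sqrt{t}$ with $\bm{Z}$, and your bound is missing the additive term
\[
C(t)\coloneqq d_\mathscr{C}\big(\bm{\Delta}(t)\bm{Z},\,\bm{\Delta}(t)\bm{Z}-\bm{\mu}_t/\sqrt{t}\big),
\]
whose integrability with respect to $t^{-1}\D t$ is a nontrivial third of the proof. It is handled by bounding $C(t)$ by the $L^1$-distance of the shifted Gaussian densities, applying the mean value theorem to the density of $\bm{Z}$, and reducing matters to $\int_1^\infty|\bm{\mu}_t|\,t^{-3/2}\D t<\infty$, which follows from $|\bm{\mu}_t|\le t\int_{\R^d\setminus\mathfrak{B}_{\bm{0}}(\sqrt{t})}|\bm{v}|\,\nu(\D\bm{v})$ and Fubini, using only the finite second moment. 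Without this step the argument is incomplete; with it, your proof coincides with the paper's.

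Two smaller points: the covariance of $\bm{Y}_t$ is $t\bm{\Sigma}(t)$, not $\bm{\Sigma}(t)$ (you presumably mean the per-unit-time rate, but this matters when you normalise by $\sqrt{t}\,\bm{\Delta}(t)$); and in the Tonelli computation for the third-moment term the inner integral should run from $\max\{M,|\bm{v}|^2/\kappa^2\}$, though your separate treatment of the small-jump range makes this harmless.
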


First, we give a multivariate extension of~\cite[Lem.~3.1]{MR1834755}, which is crucial for the proof of Theorem~\ref{thm:int-1}.

\begin{proposition}\label{prop:multi_extention_Asmussen} 
Let $g: [0,\infty) \to [0,\infty)$ be non-decreasing and absolutely continuous with $g(0)=0$ and a locally finite density $g'\ge 0$ satisfying $\int_0^y g'(x)x^{-2}\D x<\infty$ for some $y>0$. Let $\bm{X}$ be a $d$-dimensional L\'evy process with L\'evy measure $\nu$ such that $\E[g(60|\bm{X}_1|)]<\infty$ and $\E[\bm{X}_1]=0$. Then
\begin{equation}\label{eq:limit_thm:asmussen}
    \lim_{n\to\infty}n
    \E\big[g\big(\big|\bm{X}_{1/n}\big|\big)\big]
=\int_{\R^d}g(|\bm{v}|)\nu(\D \bm{v}).
\end{equation}
\end{proposition}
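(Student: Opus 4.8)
The plan is to split the mass of $\bm X_{1/n}$ into a bulk near the origin, $\{|\bm X_{1/n}|\le\delta\}$, an annulus $\{\delta<|\bm X_{1/n}|\le M\}$, and a far tail $\{|\bm X_{1/n}|>M\}$, pass to $\liminf$ and $\limsup$ in $n$, and finally let $M\to\infty$ and $\delta\downarrow0$. This is the multivariate analogue of \cite[Lem.~3.1]{MR1834755}, the scalar there being replaced by $|\bm X_{1/n}|$; the only genuinely new input is the uniform tail control. Concretely I would aim for: (i) $\sup_{n}n\E[g(|\bm X_{1/n}|)\1_{\{|\bm X_{1/n}|\le\delta\}}]\to0$ as $\delta\downarrow0$; (ii) $n\E[g(|\bm X_{1/n}|)\1_{\{\delta<|\bm X_{1/n}|\le M\}}]\to\int_{\{\delta<|\bm v|\le M\}}g(|\bm v|)\,\nu(\D\bm v)$ as $n\to\infty$, for suitable $\delta<M$; (iii) $\lim_{M\to\infty}\sup_{n}n\E[g(|\bm X_{1/n}|)\1_{\{|\bm X_{1/n}|>M\}}]=0$; and, as a by-product, $\int_{\R^d}g(|\bm v|)\,\nu(\D\bm v)<\infty$. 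Combining (i)--(iii) and sending $n\to\infty$, then $M\to\infty$, then $\delta\downarrow0$ squeezes $n\E[g(|\bm X_{1/n}|)]$ onto the required limit.

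The behaviour of $g$ near $0$ enters through the elementary estimate $g(x)=\int_0^x g'(u)\,\D u\le x^2\int_0^x g'(u)u^{-2}\,\D u$, which gives that $c_\delta:=\sup_{0<x\le\delta}g(x)/x^2\le\int_0^\delta g'(u)u^{-2}\,\D u$ is finite for small $\delta$ and $c_\delta\downarrow0$ (so $g(x)=\oh(x^2)$ at $0$). Together with the routine uniform bound $\sup_{0<t\le1}t^{-1}\E[|\bm X_t|^2\wedge1]<\infty$ from the L\'evy--It\^o decomposition, this yields (i) via $n\E[g(|\bm X_{1/n}|)\1_{\{|\bm X_{1/n}|\le\delta\}}]\le c_\delta\,n\E[|\bm X_{1/n}|^2\wedge1]$. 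For (ii) I would use the classical small-time fact --- itself a consequence of the L\'evy--It\^o decomposition --- that $n$ times the law of $\bm X_{1/n}$ converges vaguely on $\R^d\setminus\{\bm 0\}$ to $\nu$; choosing $\delta<M$ outside the at most countable set of radii on which $\nu$ charges a sphere, the integrand $\bm v\mapsto g(|\bm v|)\1_{\{\delta<|\bm v|\le M\}}$ is bounded, $\nu$-a.e.\ continuous and compactly supported in $\R^d\setminus\{\bm 0\}$, so (ii) is immediate.

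The crux is (iii), for which I would need two tail estimates. The first is a crude but uniform a priori bound $\sup_{n\ge1}n\,\p(|\bm X_{1/n}|>s)\le\psi(s)$ with $\psi(s)\to0$ as $s\to\infty$: split off the jumps of size exceeding $s/2$ (contributing $\nu(\{|\bm v|>s/2\})$, which vanishes) and apply Chebyshev to the remainder, whose variance is $t\bigl(\trace(\bm\Sigma)+\int_{\{|\bm v|\le s/2\}}|\bm v|^2\,\nu(\D\bm v)\bigr)$ with the bracket $\oh(s^2)$; here the hypothesis $\E[\bm X_1]=0$, hence $\E|\bm X_1|<\infty$ and $\int_{\{|\bm v|>1\}}|\bm v|\,\nu(\D\bm v)<\infty$, is used to keep the drift of the truncated process under control. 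This crude bound alone does not close (iii) --- for $g(x)\asymp x^2\log x$, as in Theorem~\ref{thm:int-2}, a polynomially decaying $\psi$ is not $\D g$-integrable --- but it fixes a finite threshold $s_0$ with $\psi(s_0)<\tfrac12$. The second estimate is the \emph{sharp} comparison $n\,\p(|\bm X_{1/n}|>s)\le4\,\p(|\bm X_1|>s-c_0)$ for $s\ge s_0$, obtained by a single-large-summand argument: write $\bm X_1\eqd\sum_{i=1}^n\bm\xi_i$ with $\bm\xi_i$ i.i.d.\ copies of $\bm X_{1/n}$, fix $c_0$ with $\p(|\bm X_{(n-1)/n}|>c_0)\le\tfrac12$ for all $n\ge2$ (possible since $\bm X_{(n-1)/n}\cid\bm X_1$ and hence the family is tight), and observe that $B_i:=\{|\bm\xi_i|>s\}\cap\{|\sum_{j\ne i}\bm\xi_j|\le c_0\}$ satisfies $\p(B_i)\ge\tfrac12\p(|\bm X_{1/n}|>s)$ by independence, is contained in $\{|\bm X_1|>s-c_0\}$, and meets $B_j$ only inside $\{|\bm\xi_i|>s\}\cap\{|\bm\xi_j|>s\}$; Bonferroni then gives $\p(|\bm X_1|>s-c_0)\ge\tfrac12 F_n(s)\bigl(1-F_n(s)\bigr)$ with $F_n(s):=n\p(|\bm X_{1/n}|>s)$, and $F_n(s)\le\psi(s_0)<\tfrac12$ kills the quadratic term. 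Writing $n\E[g(|\bm X_{1/n}|)\1_{\{|\bm X_{1/n}|>M\}}]$, for $M\ge s_0$, via Fubini as $ng(M)\p(|\bm X_{1/n}|>M)+n\int_{(M,\infty)}\p(|\bm X_{1/n}|\ge s)\,\D g(s)$ and inserting the sharp bound, both terms are dominated by the $M$-tails of the layer-cake identity $\int_{(0,\infty)}\p(2|\bm X_1|\ge w)\,\D g(w)=\E[g(2|\bm X_1|)]$ (after bounding $s-c_0\ge s/2$), which tend to $0$ as $M\to\infty$ since $\E[g(2|\bm X_1|)]\le\E[g(60|\bm X_1|)]<\infty$.

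To finish, applying (ii) on the annuli $\{1<|\bm v|\le M\}$ together with the sharp tail bound gives $\sup_M\int_{\{1<|\bm v|\le M\}}g(|\bm v|)\,\nu(\D\bm v)\le\sup_n n\E[g(|\bm X_{1/n}|)\1_{\{|\bm X_{1/n}|>1\}}]<\infty$, and with $\int_{\{|\bm v|\le1\}}g(|\bm v|)\,\nu(\D\bm v)\le c_1\int_{\{|\bm v|\le1\}}|\bm v|^2\,\nu(\D\bm v)<\infty$ this yields $\int_{\R^d}g(|\bm v|)\,\nu(\D\bm v)<\infty$; then the three-region decomposition and the iterated limits give \eqref{eq:limit_thm:asmussen}. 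I expect the main friction points to be the sharp tail comparison in (iii) --- in particular removing the quadratic Bonferroni term uniformly in $n$ (for which the crude a priori bound is exactly the right device) and checking that the shift/dilation constants stay within the moment hypothesis --- and turning the vague convergence of (ii) into convergence of integrals of the unbounded function $g(|\cdot|)$, which succeeds precisely because the annular integrand has compact support away from the origin.
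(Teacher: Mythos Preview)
Your proposal is correct and follows the same three-region strategy (near the origin / annulus / far tail) as the paper, but the execution differs in two places worth noting.

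First, the paper opens by applying Fubini once, writing $n\E[g(|\bm X_{1/n}|)]=\int_0^\infty g'(s)\,n\p(|\bm X_{1/n}|>s)\,\D s$, and then splits the \emph{$s$-integral} at continuity points $s_0<1<s_1$; you split in the value of $|\bm X_{1/n}|$ instead. These are equivalent reformulations. For the bulk the paper uses plain Markov and $n\E[|\bm X_{1/n}|^2]=\E[|\bm X_1|^2]$, which tacitly uses a finite second moment (not stated in the hypotheses, though satisfied in every application in the paper); your route via $\sup_{t\le 1}t^{-1}\E[|\bm X_t|^2\wedge 1]<\infty$ is more robust and avoids that implicit assumption.

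Second, for the tail comparison the paper invokes an off-the-shelf decoupling inequality (de~la~Pe\~na--Gin\'e, \cite[Thm~1.1.5]{de1999decoupling}) to obtain in one line
\[
(1-e^{-1})\,n\p(|\bm X_{1/n}|>s)\;\le\;1-(1-\p(|\bm X_{1/n}|>s))^n\;\le\;9\,\p(|\bm X_1|>s/60),
\]
which is the origin of the constant $60$ in the moment hypothesis. Your Bonferroni / single-large-summand argument is a genuine alternative: it is self-contained, gives the sharper constant $2$ in place of $60$, but needs the auxiliary crude a~priori bound (to force $F_n(s)<\tfrac12$) and the tightness of $(\bm X_{(n-1)/n})_{n\ge2}$ to fix $c_0$. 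Both approaches are sound; the paper's is shorter via the external citation, yours is more elementary and yields a stronger inequality.
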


The proof follows the strategy in the proof of~\cite[Lem.~3.1]{MR1834755}, except that we consider a more general class of functions, and the steps need to be considered in the multivariate setting. A direct calculation shows that~\eqref{eq:limit_thm:asmussen} is also valid for $g(x)=x^2$. Furthermore, the class of power functions $g(x)=x^p$, for $p>2$, satisfies the assumptions of Proposition~\ref{prop:multi_extention_Asmussen} since $\int_0^y px^{p-3}\D y=p(p-2)^{-1}y^{p-2}<\infty$ for $y>0$. For these power functions, the moment assumption $\E[g(60|\bm{X}_1|)]<\infty$ is equivalent to $\E[|\bm{X}_1|^p]<\infty$. 

\begin{proof}[Proof of Proposition~\ref{prop:multi_extention_Asmussen}]
Note that $g(x)=\int_0^\infty g'(s)\1_{\{s \le x \}}\D s$ whenever $x \ge 0$, since $g(0)=0$ and $g$ is differentiable. Thus, by Fubini's theorem, $n\E[g(|\bm{X}_{1/n}|)]=\int_0^\infty g'(s)n\p(|\bm{X}_{1/n}| >s) \D s$. Let $0<s_0<1<s_1<\infty$ be points of continuity of $\nu$ (i.e. $\nu(\{\bm{v}\in\R^d:|\bm{v}|=s_i\})=0$ for $i\in\{0,1\}$) and express the integral $\int_0^\infty g'(s)n\p(|\bm{X}_{1/n}| >s) \D s$ as a sum of three integrals over the intervals $(0,s_0)$, $(s_0,s_1)$ and $(s_1,\infty)$. In the first integral, Markov's inequality ensures that
\begin{equation*}
    \int_0^{s_0}g'(s)n\p(|\bm{X}_{1/n}| >s) \D s \le \int_0^{s_0}g'(s)s^{-2}n\E[|\bm{X}_{1/n}|^2] \D s=\E[|\bm{X}_{1}|^2]\int_0^{s_0}g'(s)s^{-2} \D s.
\end{equation*} Note that this is a finite upper bound for all sufficiently small $s_0$, which tends to $0$ as $s_0 \da 0$ by assumption. Next, by~\cite[Ex.~1, p.39]{MR1406564}, it follows that
\begin{equation}\label{eq:CLT_application}
    n\p(|\bm{X}_{1/n}| >s)
    \to 
    \ov\nu(s)\coloneqq
    \nu(\R^d\setminus\bm{B}_{\bm{0}}(s)), \quad \text{ as } n \to \infty,
\end{equation} 
for each point of continuity $s>0$, i.e. $\nu(\{\bm{v} \in \R^d:|\bm{v}|=s\})=0$. Since the set of discontinuities is countable and $\p(|\bm{X}_{1/n}| >s_0)\ge \p(|\bm{X}_{1/n}| >s)$ for $s\in[s_0,s_1]$, by dominated convergence, we have
\begin{align*}
    \lim_{n \to \infty}\int_{s_0}^{s_1} g'(s)n\p(|\bm{X}_{1/n}| >s) \D s=\int_{s_0}^{s_1} g'(s)\ov\nu(s) \D s.
\end{align*} 

By \eqref{eq:CLT_application}, there exists  $s_1>1$ such that $n\p(|\bm{X}_{1/n}| >s_1)<1$ for all $n \ge 1$. Next, we establish that
\begin{equation}\label{eq:double_bound_tail}
(1-e^{-1})n\p(|\bm{X}_{1/n}| >s) 
\le 1-(1-\p(|\bm{X}_{1/n}| >s))^n 
\le 9\p(|\bm{X}_1|>s/60),\quad s>s_1,
\end{equation} 
for all $n \ge 1$. The first inequality in~\eqref{eq:double_bound_tail} follows from the elementary inequalities $1-(1-e^{-1})x\ge e^{-x}\ge (1-x/n)^n$ for $0<x<1$. To prove the final inequality in~\eqref{eq:double_bound_tail}, recall that the random vectors $(\bm{X}_{k/n}-\bm{X}_{(k-1)/n})_{k=1,\ldots,n}$ are iid. Hence, by~\cite[Thm~1.1.5]{de1999decoupling}, we get
\begin{align*}
    1-(1-\p(|\bm{X}_{1/n}| >s))^n 
    &=\p\left(\max_{k=1,\ldots,n}|\bm{X}_{k/n}-\bm{X}_{(k-1)/n}| >s\right)\le \p\left(\max_{k=1,\ldots,n}|\bm{X}_{k/n}| >\frac{s}{2}\right) \\
    &\le 9\p\left(\left|\sum_{k=1}^n(\bm{X}_{k/n}-\bm{X}_{(k-1)/n})\right| >\frac{s}{60}\right)= 9\p\left(|\bm{X}_1| >\frac{s}{60}\right),
\end{align*} 
where the first inequality holds since $|\bm{X}_{j/n}-\bm{X}_{(j-1)/n}|\le |\bm{X}_{j/n}|+|\bm{X}_{(j-1)/n}|\le 2\max_{k=1,\ldots,n}|\bm{X}_{k/n}|$ for all $j\in\{1,\ldots,n\}$.
Thus, by~\eqref{eq:double_bound_tail}, we obtain
\begin{equation*}
    \limsup_{n \to \infty}\int_{s_1}^\infty g'(s)n\p(|\bm{X}_{1/n}| >s) \D s \le \frac{9}{1-e^{-1}}\int_{s_1}^\infty g'(s)\p(|\bm{X}_{1}| >s/60) \D s \to 0, \text{ as } s_1 \to \infty,
\end{equation*} 
since $\E[g(60|\bm{X}_1|)]<\infty$. Note that, for any $\epsilon>0$, the continuity points $s_0$ and $s_1$ can be chosen such that $\int_{(0,s_0)\cup(s_1,\infty)}g'(s)n\p(|\bm{X}_{1/n}| >s) \D s<\epsilon$ for all $n\ge 1$ and 
$\int_{(0,s_0)\cup(s_1,\infty)}g'(s)\ov\nu(s)\D s<\epsilon$. Altogether, we have
\[
\limsup_{n \to \infty} \bigg|\int_0^\infty g'(s)n\p(|\bm{X}_{1/n}| >s)\D s
-\int_0^\infty g'(s)\ov\nu(s)\D s\bigg|
\le 2\epsilon.
\]
Taking $\epsilon\da 0$ 
and applying Fubini's theorem implies
\[
\lim_{n \to \infty} \int_0^\infty g'(s)n\p(|\bm{X}_{1/n}| >s)\D s
= \int_0^\infty g'(s)\ov\nu(s)\D s
= \int_{\R^d}g(|\bm{v}|) \nu(\D \bm{v}).\qedhere 
\]
\end{proof}

Given two functions $g_1,g_2:\R\to\R\setminus\{0\}$, we say $g_1(t)\sim g_2(t)$ as $t\to\infty$ if $\lim_{t\to\infty}g_1(t)/g_2(t)=1$. Similarly, given two functions $g_1,g_2:I\times\R\to\R\setminus\{0\}$ for some $I\subset\R$, we say that $g_1(s,t)\sim g_2(s,t)$ uniformly in $s\in I$ as $t\to\infty$ if $\lim_{t\to\infty}\sup_{s\in I}|g_1(s,t)/g_2(s,t)-1|=0$. 

\begin{lemma}\label{lem:pdf_std_norm_asymp}
Let $f(\bm{v})\coloneqq (2\pi)^{-d/2}e^{-\bm{v}^\intercal\bm{v} /2}$, $\bm{v} \in \R^d$, denote the density of the multivariate standard normal random vector $\bm{Z}$.\\
\nf{(a)} Let $t\mapsto \bm{\theta}(t)\in\R^d$ satisfy $\bm{\theta}(t) \to \bm{0}$ as $t \to \infty$. Then, $f(\bm{\theta}(t)+\bm{v}) \sim f(\bm{v})$ for any $\bm{v} \in \R^d$ as $t \to \infty$.\\
\nf{(b)} Let $I \subset \R$ and $(\bm{M}(s,t))_{s\in I}$ be $d \times d$ matrices such that $\bm{M}(s,t)\to\bm{I}_d$ uniformly in $s\in I$ as $t \to \infty$. Then, for any $\bm{v} \in \R^d$, it holds that $f(\bm{M}(s,t)\bm{v}) \sim f(\bm{v})$ uniformly in $s\in I$ as $t \to \infty$. 
\end{lemma}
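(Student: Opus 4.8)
The plan is to prove both parts by writing $f(\bm w)/f(\bm v)$ explicitly and showing the exponent tends to zero. Since $f(\bm w) = (2\pi)^{-d/2}e^{-|\bm w|^2/2}$, the ratio $f(\bm w)/f(\bm v) = \exp\big(\tfrac12(|\bm v|^2 - |\bm w|^2)\big)$, so it suffices in each case to show that the relevant difference of squared norms converges to $0$ (pointwise in $\bm v$ for part~(a), uniformly in $s\in I$ for part~(b)), and then use continuity of $\exp$ at $0$.

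For part~(a): fix $\bm v\in\R^d$. With $\bm w = \bm\theta(t)+\bm v$, expand $|\bm\theta(t)+\bm v|^2 - |\bm v|^2 = 2\langle\bm\theta(t),\bm v\rangle + |\bm\theta(t)|^2$. Since $\bm\theta(t)\to\bm 0$, the Cauchy–Schwarz bound $|2\langle\bm\theta(t),\bm v\rangle|\le 2|\bm\theta(t)|\,|\bm v|$ shows both terms vanish as $t\to\infty$; hence $|\bm w|^2-|\bm v|^2\to 0$ and $f(\bm\theta(t)+\bm v)/f(\bm v)\to 1$.

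For part~(b): fix $\bm v\in\R^d$ and write $\bm w = \bm M(s,t)\bm v = \bm v + (\bm M(s,t)-\bm I_d)\bm v$. Set $\bm\eta(s,t)\coloneqq(\bm M(s,t)-\bm I_d)\bm v$, so that $|\bm\eta(s,t)|\le \opnorm{\bm M(s,t)-\bm I_d}\,|\bm v|$. The hypothesis $\bm M(s,t)\to\bm I_d$ uniformly in $s\in I$ means $\sup_{s\in I}\opnorm{\bm M(s,t)-\bm I_d}\to 0$ as $t\to\infty$ (using equivalence of matrix norms on $\R^{d\times d}$ if the uniform convergence is stated entrywise), hence $\sup_{s\in I}|\bm\eta(s,t)|\to 0$. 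Then, exactly as in part~(a), $|\bm w|^2 - |\bm v|^2 = 2\langle\bm\eta(s,t),\bm v\rangle + |\bm\eta(s,t)|^2$, and
\[
\sup_{s\in I}\big||\bm M(s,t)\bm v|^2 - |\bm v|^2\big| \le 2|\bm v|\sup_{s\in I}|\bm\eta(s,t)| + \Big(\sup_{s\in I}|\bm\eta(s,t)|\Big)^2 \xrightarrow{t\to\infty} 0.
\]
Therefore $\sup_{s\in I}\big|f(\bm M(s,t)\bm v)/f(\bm v) - 1\big| = \sup_{s\in I}\big|\exp(\tfrac12(|\bm v|^2-|\bm M(s,t)\bm v|^2)) - 1\big|\to 0$ by uniform continuity of $\exp$ on a bounded neighbourhood of $0$, which is the claimed uniform asymptotic equivalence.

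There is no real obstacle here; the only point requiring a little care is part~(b), where one must be explicit that "uniformly in $s$" for the matrices transfers to a uniform bound on $|\bm\eta(s,t)|$ via a norm inequality, and that the final step from convergence of the exponent to convergence of the ratio is itself uniform (which holds because $\exp$ is uniformly continuous on compact sets and the exponents eventually lie in a fixed bounded set).
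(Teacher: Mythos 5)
Your proposal is correct and follows essentially the same route as the paper: both compute the ratio $f(\bm{w})/f(\bm{v})$ explicitly, show that the difference of squared norms in the exponent tends to $0$ (uniformly in $s$ for part (b), where the paper invokes local Lipschitz continuity of $\bm{A}\mapsto\bm{v}^\intercal\bm{A}^\intercal\bm{A}\bm{v}$ and of $\exp$ while you make the same estimate explicit via the operator-norm bound on $(\bm{M}(s,t)-\bm{I}_d)\bm{v}$), and conclude by continuity of $\exp$ at $0$. No gaps.
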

\begin{proof}
Part (a). The relation $f(\bm{\theta}(t)+\bm{v}) \sim f(\bm{v})$ as $t \to \infty$ follows, since it for $t\to \infty$ holds that
\begin{align*}
   -2\log(f(\bm{\theta}(t)+\bm{v})/f(\bm{v}))&=(\bm{\theta}(t)+\bm{v})^\intercal(\bm{\theta}(t)+\bm{v}) -\bm{v}^\intercal\bm{v}=\bm{\theta}(t)^\intercal\bm{\theta}(t)+\bm{\theta}(t)^\intercal \bm{v}+\bm{v}^\intercal \bm{\theta}(t) 
   \to 0.
\end{align*}

Part (b). Note that, for any $\bm{v}$, the map $\bm{A}\mapsto \bm{v}^\intercal\bm{A}^\intercal\bm{A}\bm{v}$ is locally Lipschitz around $\bm{I}_d$ in the space of $d \times d$ matrices $\bm{A}$ and $x\mapsto\exp(x)$ is locally Lipschitz around $0$. Thus, we have
\begin{align*}
    \sup_{s \in I}\bigg|\frac{f(\bm{M}(s,t)\bm{v})}{f(\bm{v})}-1\bigg|
    =\sup_{s \in I}\big|\exp \left( -\tfrac{1}{2}\left[\bm{v}^\intercal\bm{M}(s,t)^\intercal\bm{M}(s,t)\bm{v}-\bm{v}^\intercal\bm{v} \right]\right)-1\big| \to 0,
\end{align*} as $t\to\infty$ since, by assumption, $\bm{M}(s,t) \to \bm{I}_d$ uniformly in $s \in I$ as $t \to \infty$.
\end{proof}

The proof of Theorem~\ref{thm:int-1} follows the general ideas and strategy of the proof of~\cite[Thm~1.1]{MR4285923}. However, due to the multivariate setting, this is a nontrivial generalization, and further considerations are needed. The operator norm is denoted by $\opnorm{\cdot}$, and defined as $\opnorm{\bm{B}}\coloneqq\sup_{\bm{w}\in \R^d\setminus\{\bm{0}\}}(|\bm{B}\bm{w}|/|\bm{w}|)$, for any $d \times d$ matrix $\bm{B}$. 

\begin{proof}[Proof of Theorem~\ref{thm:int-1}]
For all $t\ge 1$, denote by $\wt{\bm{Y}}^{(t)}=(\wt{\bm{Y}}^{(t)}_s)_{s\ge0}$ the compound Poisson process consisting of the jumps of $\bm{X}$
with magnitude greater than $\kappa \sqrt{t}$. Next, define 
$\bm{Y}^{(t)}=(\bm{Y}^{(t)}_s)_{s\ge0}$
as 
$\bm{Y}^{(t)}_s:=\bm{X}_s-\wt{\bm{Y}}^{(t)}_s$, which by~\cite[Thm~19.2]{SatoBookLevy} is
a L\'evy process with generating triplet  $(\bm{\Sigma},\bm{\gamma},\nu|_{\mathfrak{B}_{\bm{0}}(\kappa\sqrt{t})})$, and
whose jumps are of magnitude less than $\kappa \sqrt{t}$.
Note that $\bm{Y}^{(t)}_t$ has moments of all orders  since the support of the L\'evy measure of $\bm{Y}^{(t)}$ is compact~\cite[Thm~25.3]{SatoBookLevy}. Therefore, we can consider the well-defined value $\bm{\mu}_t\coloneqq \E \bm{Y}_t^{(t)}$ in $\R^d$. Moreover, recall that the constant $\kappa\ge1$ is chosen such that
\[
 \bm{\Sigma}(t)=\bm{\Sigma}+\int_{\mathfrak{B}_{\bm{0}}(\kappa\sqrt{t})}\bm{v}\bm{v}^\intercal \nu(\D \bm{v})=\Var\big(\bm{Y}_t^{(t)}\big)/t
 \] is positive definite for all $t \ge 1$.
 The first equality in the last display follows from the identity
 $\bm{\sigma}^2=\bm{\Sigma}+\int_{\R^d}\bm{v}\bm{v}^\intercal\nu(\D \bm{v})$, which holds by~\cite[Example~25.12]{SatoBookLevy} applied to $\bm{X}$. The same reasoning applied to $\bm{Y}^{(t)}$ gives the second equality above. 

 Define the convex distance between $\bm{X}_t/\sqrt{t}$ and $\bm{\Delta}(t)\bm{Z}$ as the function
\begin{equation}\label{eq:kol_dist_K(t)}
    K(t)\coloneqq d_\mathscr{C}(\bm{X}_t/\sqrt{t}, \bm{\Delta}(t)\bm{Z} )
    ,\quad\text{ for all $t\ge 1$.}
\end{equation}
The event on which $\bm{X}$ only has jumps of magnitude smaller than $\kappa\sqrt{t}$ during the time interval $[0,t]$ will be denoted $\bm{J}_{t}$. The definition of $\bm{Y}_t^{(t)}$ ensures that $\bm{X}_t=\bm{Y}_t^{(t)}$ on the event $\bm{J}_t$, implying
\[
\big|\p(\bm{X}_t\in  A)-\p(\bm{Y}_t^{(t)}\in  A)\big|
\le\E \big[\big|\1_{\{\bm{X}_t\in  A\}}-\1_{\{\bm{Y}_t^{(t)}\in  A\}}\big|\big] 
\le\E\big[\1_{\bm{J}_t^\co}\big]
=\p(\bm{J}_t^\co)
\quad\text{for all $ A \in \mathscr{C}$ and $t\ge 1$.}\]
The triangle inequality applied to $K(t)$, after adding and subtracting $\p(\bm{Y}_t^{(t)}/\sqrt{t}\in  A)$, yields
\begin{equation}
\label{eq:A_less_than_B_plus}
    K(t)\le \bar A(t)+\p(\bm{J}_{t}^\co),\quad \text{
where}\quad  
\bar A(t)\coloneqq d_\mathscr{C}(\bm{Y}^{(t)}_t/\sqrt{t},\bm{\Delta}(t)\bm{Z})
, \text{ for all }t \ge 1.
    \end{equation} 
Shifting the random vectors by $\bm{\mu}_t/\sqrt{t} =\E \bm{Y}_t^{(t)}/\sqrt{t}$ and using the triangle inequality yet again yields an upper bound for $\bar A(t)$:
\begin{gather*}
\bar A(t)=d_\mathscr{C}\big((\bm{Y}^{(t)}_t-\bm{\mu}_t)/\sqrt{t},\bm{\Delta}(t)\bm{Z} -\bm{\mu}_t/\sqrt{t}\big)
\le B(t)+C(t), \quad \text{ where }\\
B(t)\coloneqq d_\mathscr{C}\big((\bm{Y}^{(t)}_t-\bm{\mu}_t)/\sqrt{t},\bm{\Delta}(t)\bm{Z}\big)
, \quad \text{and} \quad 
C(t)\coloneqq d_\mathscr{C}(\bm{\Delta}(t)\bm{Z},\bm{\Delta}(t)\bm{Z} -\bm{\mu}_t/\sqrt{t})
, \quad \text{ for all }t \ge 1.
\end{gather*}
Inequality \eqref{eq:A_less_than_B_plus} and this upper bound on $\bar A(t)$ reduce~\eqref{eq:int-1} to proving the finiteness of the  three integrals:
\begin{equation*}
\text{\nf (a)}\enskip
\int_1^\infty \p(\bm{J}_t^\co) \frac{\D t}{t}<\infty,
\qquad\text{\nf (b)}\enskip
\int_1^\infty B(t) \frac{\D t}{t}<\infty,
\qquad\text{\nf (c)}\enskip
\int_1^\infty C(t) \frac{\D t}{t}<\infty.
\end{equation*}
Since the integrands in (a)--(c) are non-negative,  the integrals are well defined, and it thus suffices to show finiteness. Recall that $\ov\nu(r) = \nu(\R^d\setminus \mathfrak{B}_{\bm{0}}(r))$ for $r>0$, and note that, by Fubini's theorem,
\begin{equation}
\label{eq:I_expression}
I\coloneqq \int_{\R^d} |\bm{v}|^2\nu(\D \bm{v})
= \int_{\R^d} \int_0^{|\bm{v}|}2r\D r\nu(\D \bm{v})
= \int_{0}^\infty 2r\ov\nu(r)\D r
= \int_{0}^\infty \ov\nu(\sqrt{r})\D r.
\end{equation}

(a) Recall that the process $\wt{\bm{Y}}^{(t)}=\bm{X}-\bm{Y}^{(t)}$ is a compound Poisson process with intensity $\ov\nu(\kappa\sqrt{t})$. Thus, 
the first jump of $\wt{\bm{Y}}^{(t)}$ is exponentially distributed with mean $1/\ov\nu(\kappa\sqrt{t})$ (see~\cite[Thm~21.3]{SatoBookLevy}). By definition, $\bm{J}_t$ is the event where the first jump of $\wt{\bm{Y}}^{(t)}$ occurs after time $t$, so $\p(\bm{J}_t)=e^{-t\ov\nu(\kappa\sqrt{t})}$ and $$\p(\bm{J}_{t}^\co)=1-e^{-t\ov\nu(\kappa\sqrt{t})}\le t\ov\nu(\kappa\sqrt{t}), \quad\text{for all $t>0$},$$
implying the bound 
$\int_1^\infty t^{-1}\p(\bm{J}_t^\co) \D t\le \int_1^\infty \ov\nu(\kappa\sqrt{t})\D t\le I/\kappa^2<\infty$.

(b) Note that $\bm{Y}^{(t)}_t$ is nontrivial and infinitely divisible with finite exponential moments for any $t\ge1$. We can write $\bm{Y}^{(t)}_t$ as the sum  $\bm{Y}^{(t)}_t =\sum_{k=1}^n \bm{Z}_k$ of iid random vectors, where
$\bm{Z}_k\coloneqq \bm{Y}^{(t)}_{tk/n}-\bm{Y}^{(t)}_{t(k-1)/n}\eqd \bm{Y}^{(t)}_{t/n}$. 
By definition, it follows that $(\sqrt{t}\bm{\Delta}(t))(\sqrt{t}\bm{\Delta}(t))^\intercal =t\bm{\Sigma}(t)=\Var(\bm{Y}_t^{(t)})=\Var(\sum_{k=1}^n \bm{Z}_k)$. The Berry-Esseen inequality for iid multivariate random vectors~\cite[Thm~1]{MR236979} yields a constant $c>0$, dependent on the dimension $d$, such that for all $n \ge 1$ and $t \ge 1$,
\begin{align*}
B(t)
&=d_\mathscr{C}\left(t^{-1/2}\bm{\Delta}(t)^{-1}\bigg(\sum_{k=1}^n \bm{Z}_k-\E\sum_{k=1}^n \bm{Z}_k\bigg), \bm{Z} \right)
    \le 
cn \E \big[|t^{-1/2}\bm{\Delta}(t)^{-1}(\bm{Y}^{(t)}_{t/n}
    -\E \bm{Y}^{(t)}_{t/n})|^3\big] \\
    &\le 
cn \E \big[|\bm{Y}^{(t)}_{t/n}
    -\E \bm{Y}^{(t)}_{t/n}|^3\big]\opnorm{t^{-1/2}\bm{\Delta}(t)^{-1}}^3
 \le 4cn \Big(\E \big[\big|\bm{Y}^{(t)}_{t/n}\big|^3\big] 
    + \big|\E \big[\bm{Y}^{(t)}_{t/n}\big]\big|^3\Big) \opnorm{t^{-1/2}\bm{\Delta}(t)^{-1}}^3.
\end{align*}
The second inequality above follows since $| A\bm{v}| \le \opnorm{A}|\bm{v}|$ for any $\bm{v} \in \R^d$ and any $d \times d$ matrix $ A$. The third inequality in the display above follows from the inequality $|(\bm{v}+\bm{w})/2|^p\le (|\bm{v}|^p+|\bm{w}|^p)/2$ for any $\bm{v},\bm{w}\in\R^d$ and $p\ge 1$ (by convexity), applied with $\bm{v}=\bm{Y}_{t/n}^{(t)}$, $\bm{w}=-\E \bm{Y}_{t/n}^{(t)}$ and $p=3$. 
Proposition~\ref{prop:multi_extention_Asmussen} with $g(x)=x^3$ ensures that $\lim_{n\to\infty}n
    \E\big[\big|\bm{Y}^{(t)}_{t/n}\big|^3\big]
=t\int_{\mathfrak{B}_{\bm{0}}(\kappa\sqrt{t})}|\bm{v}|^3\nu(\D \bm{v})$, and together with the equality $\E\big[\bm{Y}^{(t)}_{t/n}\big]=\E\big[\bm{Y}^{(t)}_{1}\big]t/n$ (see~\cite[Example~25.12]{SatoBookLevy}), it follows that
\begin{align*}
B(t)
&\le \lim_{n\to\infty} 4cn \Big(\E \big[\big|\bm{Y}^{(t)}_{t/n}\big|^3\big] 
    + \big|\E \big[\bm{Y}^{(t)}_{t/n}\big]\big|^3\Big) t^{-3/2}\opnorm{\bm{\Delta}(t)^{-1}}^{3}
\\
&=4c\Big(\lim_{n\to\infty}n\E\big[|\bm{Y}^{(t)}_{t/n}|^3\big] 
+\lim_{n\to\infty}\big|\E\big[\bm{Y}_1^{(t)}\big]\big|^3t^3/n^2\Big)t^{-3/2}\opnorm{\bm{\Delta}(t)^{-1}}^{3}
    \\
&=\frac{4c}{\sqrt{t}}\int_{\mathfrak{B}_{\bm{0}}(\kappa\sqrt{t})}
    |\bm{v}|^3 \nu(\D \bm{v}) \opnorm{\bm{\Delta}(t)^{-1}}^{3}, \qquad \text{ for all }t \ge 1.
\end{align*} 

We now show that $\opnorm{\bm{\Delta}(t)^{-1}}$ is bounded by a finite constant independent of $t$, i.e. $\opnorm{\bm{\Delta}(t)^{-1}} \le \opnorm{\bm{\Delta}(1)^{-1}}<\infty$ for all $t \ge 1$. Indeed, recall that $\bm{\Sigma}(t)=\bm{\sigma}^2-\int_{\R^d\setminus \mathfrak{B}_{\bm{0}}(\kappa\sqrt{t})}\bm{v}\bm{v}^\intercal \nu(\D \bm{v})$, and note that
\begin{align}
\opnorm{\bm{\Delta}(t)^{-1}}^2
&=\sup_{\bm{w}\in \R^d\setminus\{\bm{0}\}}\frac{|\bm{w}|^2}{|\bm{\Delta}(t) \bm{w}|^2}
=\sup_{\bm{w}\in \R^d\setminus\{\bm{0}\}}\frac{|\bm{w}|^2}{\bm{w}^\intercal \bm{\Sigma}(t) \bm{w}}
=\sup_{\bm{w}\in \R^d\setminus\{\bm{0}\}}\frac{|\bm{w}|^2}{\bm{w}^\intercal \bm{\Sigma}(1) \bm{w} + \bm{w}^\intercal (\bm{\Sigma}(t)-\bm{\Sigma}(1))\bm{w}}\nonumber\\
&\le\sup_{\bm{w}\in \R^d\setminus\{\bm{0}\}}\frac{|\bm{w}|^2}{\bm{w}^\intercal \bm{\Sigma}(1) \bm{w}}
= \sup_{\bm{w}\in\R^d\setminus\{\bm{0}\}}\frac{|\bm{w}|^2}{|\bm{\Delta}(1) \bm{w}|^2}=\opnorm{\bm{\Delta}(1)^{-1}}^2,\label{eq:op_norm_ineq}
\end{align}
where the inequality from the positive semi-definiteness of $\bm{\Sigma}(t)-\bm{\Sigma}(1)=\int_{\mathfrak{B}_{\bm{0}}(\kappa\sqrt{t})\setminus \mathfrak{B}_{\bm{0}}(\kappa)}\bm{v}\bm{v}^\intercal \nu(\D \bm{v})$ for $t\ge1$.
This concludes that $B(t) \le 4c\opnorm{\bm{\Delta}(1)^{-1}}^3 t^{-1/2}
    \int_{\mathfrak{B}_{\bm{0}}(\kappa\sqrt{t})}|\bm{v}|^3\nu(\D \bm{v})$. Integrating over $B(t)$ yields,
\begin{equation}
\label{eq:bound_on_B(t)_integral}
\frac{1}{4c\opnorm{\bm{\Delta}(1)^{-1}}^3}\int_1^\infty B(t)\frac{\D t}{t}\le
   \int_1^\infty 
\int_{\mathfrak{B}_{\bm{0}}(\kappa\sqrt{t})}|\bm{v}|^3\nu(\D \bm{v})\frac{\D t}{t^{3/2}}\le 3\int_1^\infty 
\int_0^{\kappa\sqrt{t}} r^2\ov \nu(r)\D r\frac{\D t}{t^{3/2}},
\end{equation} 
where the second inequality follows from
\begin{equation*}
    \int_{\mathfrak{B}_{\bm{0}}(w)}|\bm{v}|^3\nu(\D \bm{v})
=-w^3\ov\nu(w)+3\int_{0}^w r^2\ov\nu(r)\D r, 
\quad \text{ for all } w>0.
\end{equation*}
Thus, to show that the integral in (b) is finite, it suffices to show that the right-hand side of~\eqref{eq:bound_on_B(t)_integral} is finite. To see this, note that $0\le y^2\ov\nu(y)
\le \int_{\R\setminus \mathfrak{B}_{\bm{0}}(y)}|\bm{v}|^2\nu(\D \bm{v})\to0$ as $y\to\infty$ by the monotone convergence theorem, which ensures that 
$\int_0^{\kappa\sqrt{t}}r^2\ov\nu(r)\D r/\sqrt{t}\to0$
as $t\to\infty$. Hence, integration-by-parts and~\eqref{eq:I_expression} concludes part (b):
\begin{align*}
\int_1^\infty t^{-3/2}
    \int_{0}^{\kappa\sqrt{t}} r^2\ov\nu(r)\D r\D t
&=  \bigg[-2t^{-1/2}\int_{0}^{\kappa\sqrt{t}} r^2\ov\nu(r)\D r\bigg]_{1}^\infty
    + \kappa^3\int_1^\infty \ov\nu(\kappa\sqrt{t})\D t\\ 
&= 2\int_{0}^{\kappa} r^2\ov\nu(r)\D r + \kappa \int_{\kappa^2}^\infty \ov \nu(\sqrt{y}) \D y \le 2\int_{0}^{\kappa} r^2\ov\nu(r)\D r + \kappa I<\infty.
\end{align*}

(c) Let $f:\R^d \to (0,\infty)$ denote the density of $\bm{Z}$, i.e. $f(\bm{v})\coloneqq (2\pi)^{-d/2}e^{-\bm{v}^\intercal\bm{v} /2}$, for $\bm{v} \in \R^d$. Thus,
\begin{align*}
    C(t)&=d_\mathscr{C}(\bm{Z},\bm{Z} -\bm{\Delta}(t)^{-1}\bm{\mu}_t/\sqrt{t})
    \le \int_{\R^d}|f(\bm{v}+\bm{\Delta}(t)^{-1}\bm{\mu}_t/\sqrt{t})-f(\bm{v})|\D \bm{v}.
\end{align*} 
Let $\varpi_{\bm{v}}(s,t)\coloneqq (\bm{v}+\bm{\Delta}(t)^{-1}\bm{\mu}_t/\sqrt{t})s+\bm{v}(1-s)=st^{-1/2}\bm{\Delta}(t)^{-1}\bm{\mu}_t+\bm{v}$ for $\bm{v}\in \R^d$, $s \in [0,1]$ and $t \ge 1$, i.e. line segments connecting the points $\bm{v}$ and $\bm{v}+\bm{\Delta}(t)^{-1}\bm{\mu}_t/\sqrt{t}$. The mean value theorem implies the existence of some $\bm{w}_{\bm{v}}(t)\in \{\varpi_{\bm{v}}(s,t):s \in [0,1]\}$, such that 
\begin{align*}
    |f(\bm{v}+\bm{\Delta}(t)^{-1}\bm{\mu}_t/\sqrt{t})-f(\bm{v})| &= |\nabla f(\bm{w}_{\bm{v}}(t))(t^{-1/2}\bm{\Delta}(t)^{-1}\bm{\mu}_t)| = |f(\bm{w}_{\bm{v}}(t))\bm{w}_{\bm{v}}(t)^\intercal(t^{-1/2}\bm{\Delta}(t)^{-1}\bm{\mu}_t)|\\
    &\le t^{-1/2}|\bm{\mu}_t|\opnorm{\bm{\Delta}(1)^{-1}}f(\bm{w}_{\bm{v}}(t))|\bm{w}_{\bm{v}}(t)|,
\end{align*} since $\opnorm{\bm{\Delta}(t)^{-1}} \le \opnorm{\bm{\Delta}(1)^{-1}}$ by~\eqref{eq:op_norm_ineq}. Next, we note that $|\bm{w}_{\bm{v}}(t)| \le \max\{|\varpi_{\bm{v}}(1,t)|,|\bm{v}|\}$, where $|\varpi_{\bm{v}}(1,t)| \sim |\bm{v}|$ as $t \to \infty$. Furthermore, 
$$f(\bm{w}_{\bm{v}}(t)) \le (2\pi)^{-d/2}e^{-\min\{|\varpi_{\bm{v}}(1,t)|^2,|\bm{v}|^2\}/2}=\max\{f(\varpi_{\bm{v}}(1,t)),f(\bm{v})\},$$ 
where $f(\varpi_{\bm{v}}(1,t)) \sim f(\bm{v})$ as $t \to \infty$ by Lemma~\ref{lem:pdf_std_norm_asymp}(a) with $\bm{\theta}(t)=t^{-1/2}\bm{\Delta}(t)^{-1}\bm{\mu}_t$. The above bounds and asymptotic equivalences, imply that $\int_1^\infty C(t)t^{-1}\D t<\infty$ if 
 \begin{equation*}
\int_{\R^d} f(\bm{v})|\bm{v}|\D \bm{v}\opnorm{\bm{\Delta}(1)^{-1}}\int_1^\infty t^{-1/2}|\bm{\mu}_t|\frac{\D t}{t}<\infty.
 \end{equation*} 
Since $\int_{\R^d} f(\bm{v})|\bm{v}|\D \bm{v} =\E[|\bm{Z}|]<\infty$ and $\opnorm{\bm{\Delta}(1)^{-1}}<\infty$, it suffices to prove that $\int_1^\infty |\bm{\mu}_t|t^{-3/2}\D t<\infty$.
Recall that $\bm{0}=\E[\bm{X}_t]=t\bm{\gamma} +t\int_{\R^d\setminus \mathfrak{B}_{\bm{0}}(1)}\bm{v}\nu(\D \bm{v})$, which implies  $\bm{\mu}_t=\E[\bm{Y}_t^{(t)}]=-t\int_{\R^d\setminus \mathfrak{B}_{\bm{0}}(\kappa\sqrt{t})}\bm{v}\nu(\D \bm{v})$. Hence $|\bm{\mu}_{t}|\le t\int_{\R^d\setminus \mathfrak{B}_{\bm{0}}(\sqrt{t})}|\bm{v}|\nu(\D \bm{v})$ for all $t\ge 1$, since  $\kappa\ge 1$. Fubini’s theorem concludes the proof:
\begin{align*}
\int_1^\infty \frac{|\bm{\mu}_t|}{t^{3/2}}\D t
&\le \int_1^\infty \frac{1}{\sqrt{t}} \int_{\R^d\setminus \mathfrak{B}_{\bm{0}}(\sqrt{t})}|\bm{v}|\nu(\D \bm{v})\D t\\
&=\int_{\R^d\setminus \mathfrak{B}_{\bm{0}}(1)}\int_1^{|\bm v|^2} \frac{\D t}{\sqrt{t}}|\bm{v}|\nu(\D \bm{v})
\le 2\int_{\R^d\setminus \mathfrak{B}_{\bm{0}}(1)}|\bm{v}|^2
\nu(\D \bm{v})
<\infty.\qedhere
\end{align*}
\end{proof}

 The following two lemmas are required in the proof of Theorem~\ref{thm:int-2} below.
\begin{lemma}\label{lem:det_lemma}
Let $\bm{M}(t)$ be a $d \times d$ positive definite matrix (i.e. with strictly positive eigenvalues), such that $\bm{M}(t) \to \bm{I}_d$ as $t\to\infty$. Let $\bm{M}(s,t)\coloneqq s\bm{I}_d +(1-s)\bm{M}(t)$ and $D_t(s)\coloneqq\frac{\D}{\D s}[\det(\bm{M}(s,t))^{-1}]$ for $s \in [0,1]$ (with the derivatives at $s\in\{0,1\}$ being one-sided) and $t \ge 1$. Then, for every $t\ge 1$, the function $s\mapsto D_t(s)$ is well-defined, continuous and finite on $[0,1]$. Moreover, $\det(\bm{M}(s,t))\sim 1$ and $D_t(s) \sim -\trace(\bm{I}_d-\bm{M}(t))$ uniformly in $s\in [0,1]$ as $t \to \infty$.
\end{lemma}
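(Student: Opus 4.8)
The plan is to dispatch the three assertions in turn: well-definedness and $\det(\bm{M}(s,t))\sim1$ are soft, while the asymptotics of $D_t(s)$ need a short determinant computation. Throughout I would write $\bm{E}(t)\coloneqq\bm{I}_d-\bm{M}(t)$, so that $\opnorm{\bm{E}(t)}\to0$. For fixed $t\ge1$, since $\bm{M}(s,t)=s\bm{I}_d+(1-s)\bm{M}(t)$ is a real polynomial in $\bm{M}(t)$, its eigenvalues are $s+(1-s)\lambda_i(t)$, where $\lambda_1(t),\dots,\lambda_d(t)>0$ are the eigenvalues of $\bm{M}(t)$; these are all $>0$ for $s\in[0,1]$, so $\det(\bm{M}(s,t))=\prod_i(s+(1-s)\lambda_i(t))>0$ on $[0,1]$. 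As $s\mapsto\bm{M}(s,t)$ is affine, $s\mapsto\det(\bm{M}(s,t))$ is a polynomial in $s$, strictly positive on the compact set $[0,1]$ and hence bounded below there, so $s\mapsto\det(\bm{M}(s,t))^{-1}$ is $C^\infty$ on $[0,1]$ (with one-sided derivatives at the endpoints); thus $D_t$ is well-defined, continuous and finite there. For the uniform asymptotics I would use $\opnorm{\bm{M}(s,t)-\bm{I}_d}=(1-s)\opnorm{\bm{E}(t)}\le\opnorm{\bm{E}(t)}$, so $\bm{M}(s,t)\to\bm{I}_d$ uniformly in $s\in[0,1]$; since $\det$ is a polynomial in the entries, hence Lipschitz on a fixed neighbourhood of $\bm{I}_d$, this gives $|\det(\bm{M}(s,t))-1|\le C\opnorm{\bm{E}(t)}\to0$ uniformly in $s$, i.e.\ $\det(\bm{M}(s,t))\sim1$ uniformly; and the same bound together with the Neumann series $\bm{M}(s,t)^{-1}=\sum_{k\ge0}(\bm{I}_d-\bm{M}(s,t))^k$ (valid once $\opnorm{\bm{I}_d-\bm{M}(s,t)}<1$) yields $\opnorm{\bm{M}(s,t)^{-1}-\bm{I}_d}\le C'\opnorm{\bm{E}(t)}$ and $|\det(\bm{M}(s,t))^{-1}-1|\le C'\opnorm{\bm{E}(t)}$ uniformly in $s$ for all large $t$.

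For the asymptotics of $D_t$, I would invoke Jacobi's formula $\tfrac{\D}{\D s}\det(\bm{M}(s,t))=\det(\bm{M}(s,t))\,\trace\big(\bm{M}(s,t)^{-1}\tfrac{\D}{\D s}\bm{M}(s,t)\big)$ with $\tfrac{\D}{\D s}\bm{M}(s,t)=\bm{I}_d-\bm{M}(t)=\bm{E}(t)$, which gives
\[
D_t(s)=\tfrac{\D}{\D s}\big[\det(\bm{M}(s,t))^{-1}\big]=-\det(\bm{M}(s,t))^{-1}\,\trace\big(\bm{M}(s,t)^{-1}\bm{E}(t)\big)
\]
(alternatively one can differentiate $\det(\bm{M}(s,t))=\prod_i(1-(1-s)(1-\lambda_i(t)))$ directly). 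Using $|\trace(\bm{A}\bm{B})|\le d\opnorm{\bm{A}}\opnorm{\bm{B}}$ and the bounds from the first paragraph,
\[
\trace\big(\bm{M}(s,t)^{-1}\bm{E}(t)\big)=\trace(\bm{E}(t))+\trace\big((\bm{M}(s,t)^{-1}-\bm{I}_d)\bm{E}(t)\big)=\trace(\bm{E}(t))+O\big(\opnorm{\bm{E}(t)}^2\big)
\]
uniformly in $s$; since $\det(\bm{M}(s,t))^{-1}=1+O(\opnorm{\bm{E}(t)})$ and $|\trace(\bm{E}(t))|\le d\opnorm{\bm{E}(t)}$, this yields
\[
D_t(s)=-\trace(\bm{E}(t))+O\big(\opnorm{\bm{E}(t)}^2\big)\qquad\text{uniformly in }s\in[0,1],
\]
and dividing by $-\trace(\bm{E}(t))$ gives $\sup_{s\in[0,1]}\big|D_t(s)/(-\trace(\bm{E}(t)))-1\big|=O\big(\opnorm{\bm{E}(t)}^2/|\trace(\bm{E}(t))|\big)$, which is the claimed uniform equivalence provided this ratio tends to $0$.

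\textbf{The main obstacle} is exactly that last point: one needs the remainder $O(\opnorm{\bm{E}(t)}^2)$ to be of smaller order than the main term $\trace(\bm{E}(t))$, i.e.\ $\opnorm{\bm{I}_d-\bm{M}(t)}^2=o\big(|\trace(\bm{I}_d-\bm{M}(t))|\big)$ as $t\to\infty$. This holds automatically whenever $\bm{I}_d-\bm{M}(t)$ is positive semidefinite, since then $\trace(\bm{I}_d-\bm{M}(t))\ge\opnorm{\bm{I}_d-\bm{M}(t)}$ and the ratio is $O(\opnorm{\bm{I}_d-\bm{M}(t)})\to0$; this is the setting in which the lemma is applied in the proof of Theorem~\ref{thm:int-2}, where $\bm{M}(t)$ is a normalisation of the truncated covariance $\bm{\Sigma}(t)$, which satisfies $\bm{\Sigma}(t)\preceq\bm{\sigma}^2$ and hence makes $\bm{I}_d-\bm{M}(t)\succeq\bm{0}$. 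The only remaining thing to monitor is uniformity over $s\in[0,1]$, and that is painless because the perturbation $\bm{M}(s,t)-\bm{I}_d=(1-s)\bm{E}(t)$ has operator norm at most $\opnorm{\bm{E}(t)}$ for every $s\in[0,1]$.
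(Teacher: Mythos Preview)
Your argument is correct and takes a somewhat different route from the paper. The paper works eigenvalue-by-eigenvalue: it writes $\det(\bm{M}(s,t))=\prod_i(s+(1-s)\lambda_{t,i})$, differentiates $\log\det$ to reach the same formula $D_t(s)=-\det(\bm{M}(s,t))^{-1}\sum_i(1-\lambda_{t,i})/(s+(1-s)\lambda_{t,i})$, and then invokes the mediant inequality $\min_i a_i/b_i\le(\sum_j a_j)/(\sum_j b_j)\le\max_i a_i/b_i$ with $a_i=(1-\lambda_{t,i})/(s+(1-s)\lambda_{t,i})$ and $b_i=1-\lambda_{t,i}$, noting that $a_i/b_i=1/(s+(1-s)\lambda_{t,i})\to1$ uniformly. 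Your Jacobi-formula derivation is the coordinate-free version of the same identity, and your expansion $D_t(s)=-\trace(\bm{E}(t))+O(\opnorm{\bm{E}(t)}^2)$ simply trades the mediant bound for an operator-norm perturbation estimate; neither is more elementary, though yours would extend verbatim to non-symmetric perturbations.

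The obstacle you flag is real, and you are right not to suppress it: the lemma as stated is \emph{false} without a semi-definiteness condition on $\bm{I}_d-\bm{M}(t)$ (e.g.\ $\bm{M}(t)=\mathrm{diag}(1+1/t,\,1-1/t+1/t^3)$ gives $\trace(\bm{I}_d-\bm{M}(t))=-1/t^3$ while $D_t(s)\sim-2(1-s)/t^2$). The paper's proof carries the \emph{same} hidden hypothesis, since the mediant inequality requires all $b_i=1-\lambda_{t,i}$ to be of one sign. So you have not introduced a gap, only made explicit one that is already in the paper and harmless in its application in the proof of Theorem~\ref{thm:int-2}. One small correction on that application: with $\bm{M}(t)=\sqrt{\bm\sigma}\,\bm\Delta(t)^{-1}\sqrt{\bm\sigma}$ and $\bm\Sigma(t)\preceq\bm\sigma^2$, operator monotonicity of the square root and of the inverse give $\bm\Delta(t)^{-1}\succeq\bm\sigma^{-1}$ and hence $\bm{M}(t)\succeq\bm{I}_d$, i.e.\ $\bm{I}_d-\bm{M}(t)\preceq\bm 0$ rather than $\succeq\bm 0$; your key inequality $|\trace(\bm{I}_d-\bm{M}(t))|\ge\opnorm{\bm{I}_d-\bm{M}(t)}$ is indifferent to the direction of the semi-definiteness, so the conclusion is unaffected.
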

\begin{proof}
Fix an ordering of the eigenvalues of $\bm{M}(t)$ denoted by $(\lambda_{t,i})_{i\in \{1,\ldots, d\}}$, and note that the eigenvalues of $\bm{M}(s,t)$ are given by $\lambda_{t,i}(s)=s+(1-s)\lambda_{t,i}$ for $s \in [0,1]$, $i\in \{1,\ldots, d\}$ and $t \ge 1$ (as the eigenvectors of $\bm{M}(s,t)$ do not depend on $s\in[0,1]$). Hence, $\det(\bm{M}(s,t))=\prod_{i=1}^d \lambda_{t,i}(s)=\prod_{i=1}^d (s+(1-s)\lambda_{t,i})$. Next, 
rewriting 
$\det(\bm{M}(s,t))^{-1}=\exp(-\log (\det(\bm{M}(s,t)))$, we get
\begin{align*}
    D_t(s)&=\frac{\D}{\D s}[\det(\bm{M}(s,t))^{-1}]
    =-[\det(\bm{M}(s,t))]^{-1}\frac{\D}{\D s}[ \log\det(\bm{M}(s,t))]\\
    &=-[\det(\bm{M}(s,t))]^{-1}\sum_{i=1}^d\frac{\D}{\D s}[\log(\lambda_{t,i} + s(1-\lambda_{t,i}))]
    =-[\det(\bm{M}(s,t))]^{-1}\sum_{i=1}^d\frac{1-\lambda_{t,i}}{\lambda_{t,i} + s(1-\lambda_{t,i})}.
\end{align*} 
First, we prove that $D_t(s)$ is well-defined and finite for all $s\in [0,1]$ and $t\ge 1$. Recall, by definition of $\det(\bm{M}(s,t))$, that $\det(\bm{M}(s,t)) \to 1$ as $s \uparrow 1$ and $\det(\bm{M}(s,t))\to \prod_{i=1}^d \lambda_{t,i}$ as $s \da 0$. Thus, for fixed $t\ge1$, it follows that
\begin{equation*}
    D_t(s) \to \begin{dcases} \frac{-\sum_{i=1}^d (1-\lambda_{t,i})/\lambda_{t,i}}{\prod_{i=1}^d \lambda_{t,i}}\in \R, &\text{ as } s \da 0,\\
    -\sum_{i=1}^d (1-\lambda_{t,i})=-\trace(\bm{I}_d-\bm{M}(t))\in \R, &\text{ as } s \uparrow 1. \end{dcases}
\end{equation*} 
Since $\det(\cdot)$ is  continuous and the eigenvalues $\lambda_{t,i}>0$ are strictly positive for all $t\ge 1$ and $i\in\{1,\ldots,d\}$, it is clear that $s\mapsto D_t(s)$ is well-defined and continuous on $[0,1]$ for any $t\ge 1$.

Next, we show that $\det(\bm{M}(s,t))\sim1$ and $D_t(s) \sim -\trace(\bm{I}_d-\bm{M}(t))$ uniformly in $s\in[0,1]$ as $t \to \infty$. Since $\bm{M}(t) \to \bm{I}_d$ as $t \to \infty$, we note that the continuity of eigenvalues~\cite[p.~124]{kato2012short} implies that the eigenvalues $\lambda_{t,i}$ of $\bm{M}(t)$ have limits $\lambda_{t,i} \to 1$ as $t \to \infty$ for all $i \in \{1,\ldots, d\}$. Note that $\prod_{i=1}^d (s+(1-s)\lambda_{t,i}) \to 1$ uniformly in $s \in [0,1]$ as $t \to \infty$, implying that $\det(\bm{M}(s,t))^{-1} \sim 1$  uniformly in $s \in [0,1]$ as $t \to \infty$. Furthermore, 
\begin{equation}
\label{eq:D_t(s)_assypt}
D_t(s)\sim - \sum_{i=1}^d\frac{1-\lambda_{t,i}}{\lambda_{t,i} + s(1-\lambda_{t,i})}\quad\text{uniformly in $s \in [0,1]$ as $t \to \infty$.}
\end{equation}

Note that for any $a_1,\ldots,a_d\in\R$ and $b_1,\ldots,b_d>0$ and
 each index $j\in\{1,\ldots,d\}$ we have 
$$ 
\min_{i\in\{1,\ldots,d\}}\frac{a_i}{b_i}
\le 
\frac{a_j}{b_j}
\le\max_{i\in\{1,\ldots,d\}}\frac{a_i}{b_i},\quad\text{and hence}\quad
b_j \min_{i\in\{1,\ldots,d\}}\frac{a_i}{b_i}\le a_j \le
b_j \max_{i\in\{1,\ldots,d\}}\frac{a_i}{b_i}.
$$
Summing up these inequalities over $j\in\{1,\ldots,d\}$ and dividing by 
$\sum_{j=1}^d b_j$
yields
\[
\min_{i\in\{1,\ldots,d\}}\frac{a_i}{b_i}
\le \frac{\sum_{j=1}^d a_j}{\sum_{j=1}^d b_j}
\le\max_{i\in\{1,\ldots,d\}}\frac{a_i}{b_i}.
\]
Thus, since $\trace(\bm{I}_d-\bm{M}(t))= \sum_{i=1}^d (1-\lambda_{t,i})$, the asymptotic expression in~\eqref{eq:D_t(s)_assypt} and 
$1/(s+(1-s)\lambda_{t,i})\sim 1$ uniformly in $s \in [0,1]$ as $t \to \infty$
imply $D_t(s) \sim -\trace(\bm{I}_d-\bm{M}(t))$ uniformly in $s \in [0,1]$ as $t \to \infty$. 
\end{proof}

\begin{lemma}
\label{lem:sq_of_PSD_matrices}
Let $\bm{N}_1,\bm{N}_2$ be symmetric positive semi-definite matrices. Define $\bm{N}_3\coloneqq\bm{N}_1+\bm{N}_2$ and, for $i\in\{1,3\}$, let $\bm{M}_i$ be symmetric positive semi-definite matrices satisfying $\bm{M}_i^2=\bm{N}_i$. Then $\bm{M}_3-\bm{M}_1$ is positive semi-definite and
\[
\sqrt{\trace\big((\bm{M}_3-\bm{M}_1)^2\big)}
\le\trace(\bm{M}_3-\bm{M}_1).
\]
\end{lemma}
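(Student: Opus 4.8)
The plan is to prove the two assertions in turn, the first being an instance of operator monotonicity of the matrix square root and the second a two-norm comparison for the eigenvalues of a positive semi-definite matrix.

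First I would establish that $\bm{M}_3-\bm{M}_1$ is positive semi-definite. Since $\bm{N}_3-\bm{N}_1=\bm{N}_2$ is positive semi-definite, we have $\bm{w}^\intercal\bm{N}_1\bm{w}\le\bm{w}^\intercal\bm{N}_3\bm{w}$ for all $\bm{w}\in\R^d$, so it suffices to invoke the operator monotonicity of $\bm{N}\mapsto\sqrt{\bm{N}}$ on symmetric positive semi-definite matrices (a special case of the Löwner--Heinz inequality). For completeness I would include the short elementary argument rather than merely cite it: the symmetric matrix $\bm{M}_1-\bm{M}_3$ has a largest (real) eigenvalue $\lambda$ with unit eigenvector $\bm{v}$, and assuming $\lambda>0$ for contradiction, the identity $\bm{M}_1\bm{v}=\bm{M}_3\bm{v}+\lambda\bm{v}$ together with $\bm{M}_i^\intercal=\bm{M}_i$ and $\bm{M}_i^2=\bm{N}_i$ gives
\[
\bm{v}^\intercal\bm{N}_1\bm{v}=|\bm{M}_1\bm{v}|^2=|\bm{M}_3\bm{v}|^2+2\lambda\,\bm{v}^\intercal\bm{M}_3\bm{v}+\lambda^2=\bm{v}^\intercal\bm{N}_3\bm{v}+2\lambda\,\bm{v}^\intercal\bm{M}_3\bm{v}+\lambda^2.
\]
Because $\bm{v}^\intercal\bm{N}_1\bm{v}\le\bm{v}^\intercal\bm{N}_3\bm{v}$ this forces $2\lambda\,\bm{v}^\intercal\bm{M}_3\bm{v}+\lambda^2\le0$; but $\bm{M}_3$ is positive semi-definite and $\lambda>0$, so the left-hand side is at least $\lambda^2>0$, a contradiction. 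Hence every eigenvalue of $\bm{M}_1-\bm{M}_3$ is $\le0$, i.e.\ $\bm{M}_3-\bm{M}_1$ is positive semi-definite.

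Next I would deduce the trace inequality. Set $\bm{P}\coloneqq\bm{M}_3-\bm{M}_1$, which by the previous step is positive semi-definite, and let $p_1,\ldots,p_d\ge0$ be its eigenvalues. Then $\trace(\bm{P}^2)=\sum_{i=1}^d p_i^2$ and $\trace(\bm{P})=\sum_{i=1}^d p_i\ge0$, and since all cross terms $p_ip_j$ are non-negative,
\[
\trace(\bm{P}^2)=\sum_{i=1}^d p_i^2\le\Big(\sum_{i=1}^d p_i\Big)^2=\trace(\bm{P})^2 ,
\]
and taking non-negative square roots yields $\sqrt{\trace((\bm{M}_3-\bm{M}_1)^2)}\le\trace(\bm{M}_3-\bm{M}_1)$.

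As for difficulty, I do not expect any serious obstacle: the only non-routine ingredient is the operator monotonicity of the square root, which is classical and admits the two-line self-contained proof above, while the trace bound is simply the estimate $\|\bm{p}\|_{\ell^2}\le\|\bm{p}\|_{\ell^1}$ applied to the vector $\bm{p}=(p_1,\ldots,p_d)$ of non-negative eigenvalues of $\bm{M}_3-\bm{M}_1$.
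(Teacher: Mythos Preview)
Your proposal is correct and follows essentially the same route as the paper: both arguments assume a negative eigenvalue of $\bm{M}_3-\bm{M}_1$ (equivalently a positive eigenvalue of $\bm{M}_1-\bm{M}_3$), expand $|\bm{M}_1\bm{v}|^2$ via $\bm{M}_1\bm{v}=(\bm{M}_3+c\bm{I}_d)\bm{v}$ to reach the contradiction $0=\bm{v}^\intercal(\bm{N}_2+2c\bm{M}_3)\bm{v}+c^2|\bm{v}|^2$, and then derive the trace inequality from $\sum_i p_i^2\le(\sum_i p_i)^2$ for the non-negative eigenvalues. Your additional remark that the first step is an instance of the L\"owner--Heinz operator monotonicity of the square root is a nice contextualisation that the paper does not mention.
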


\begin{proof}
Since $\bm{M}_3-\bm{M}_1$ is symmetric, it is diagonalisable: there exists an orthogonal matrix (of eigenvectors) $\bm{P}$ and a diagonal matrix (of eigenvalues) $\bm{\Lambda}$ such that $\bm{M}_3-\bm{M}_1=\bm{P}\bm{\Lambda}\bm{P}^\intercal$ and hence $(\bm{M}_3-\bm{M}_1)^2=\bm{P}\bm{\Lambda}^2\bm{P}^\intercal$. Then the traces in consideration satisfy $\trace((\bm{M}_3-\bm{M}_1)^2)=\trace(\bm{\Lambda}^2)$ and $\trace(\bm{M}_3-\bm{M}_1)=\trace(\bm{\Lambda})$. Thus, the result will follow from the elementary inequality $\sum_{i=1}^d a_i^2\le (\sum_{i=1}^d a_i)^2$ for $a_1,\ldots,a_d\ge 0$, if we show that all the components of $\bm{\Lambda}$ are non-negative (note that this will also imply that $\bm{M}_3-\bm{M}_1$ is positive semi-definite). Suppose otherwise, so that for some column vector $\bm{v}\ne\bm{0}$ of $\bm{P}$ and some diagonal element $-c<0$ of $\bm{\Lambda}$, we have $(\bm{M}_3-\bm{M}_1)\bm{v}=-c\bm{v}$ and hence $\bm{M}_1\bm{v}=(\bm{M}_3+c\bm{I}_d)\bm{v}$. Taking norms, we obtain 
\begin{align*}
\bm{v}^\intercal\bm{N}_1\bm{v}
&=\bm{v}^\intercal\bm{M}_1^\intercal\bm{M}_1\bm{v}
=\bm{v}^\intercal(\bm{M}_3^\intercal+c\bm{I}_d)(\bm{M}_3+c\bm{I}_d)\bm{v}\\
&=\bm{v}^\intercal(\bm{N}_3+c^2\bm{I}_d+c(\bm{M}_3^\intercal+\bm{M}_3))\bm{v}
=\bm{v}^\intercal(\bm{N}_1+\bm{N}_2+2c\bm{M}_3)\bm{v} + c^2|\bm{v}|^2,
\end{align*}
implying $0=\bm{v}^\intercal(\bm{N}_2+2c\bm{M}_3)\bm{v} + c^2|\bm{v}|^2$. This is contradictory, since 
$\bm{v}^\intercal(\bm{N}_2+2c\bm{M}_3)\bm{v}\ge0$
(recall $\bm{N}_2$ and $\bm{M}_3$ are symmetric positive semi-definite and $c>0$)
and 
$c^2|\bm{v}|^2>0$. 
\end{proof}

\begin{proof}[Proof of Theorem~\ref{thm:int-2}]
Assume $\E\bm{X}_1=\bm{0}$ (say, by subtracting the mean from $\bm{X}$).\\
\underline{Step 1.} Let $K(t)\coloneqq\sup_{ A\in \mathscr{A}}|\p(\bm{X}_t/\sqrt{t}\in  A)
-\p(\bm{\Delta}(t) \bm{Z} \in  A)|$ for $t>0$ be as in \eqref{eq:kol_dist_K(t)} and recall $\mathscr{A}\subset\mathscr{C}$. Denote $\varphi(\bm{B})\coloneqq d_\mathscr{A}(\bm{B}\bm{Z},\bm{\sigma}\bm{Z})
$ where $\bm{B}$ is an invertible $d \times d$ matrix and and $\bm\sigma^2=\E[\bm{X}_1\bm{X}_1^\intercal]$. Note that $\varphi(\bm{\Delta}(t))=d_{\mathscr{A}}(\bm{\Delta}(t)\bm{Z} ,\bm{\sigma}\bm{Z})$, where $\bm{\Delta}(t)$ is as in Theorem~\ref{thm:int-0}. The triangle inequality then yields 
\[
K(t) + \varphi(\bm{\Delta}(t))
\ge d_{\mathscr{A}}(\bm{X}_t/\sqrt{t},\bm{\sigma} \bm{Z})
\ge\varphi(\bm{\Delta}(t))-K(t),\qquad\text{for all $t\ge1$.}
\]
We see directly that $\int_1^\infty t^{-1}K(t)\D t<\infty$ by Theorem~\ref{thm:int-1}. Hence~\eqref{eq:int-2} holds if and only if 
\begin{equation}
\label{eq:int-3}
\int_1^\infty \varphi(\bm{\Delta}(t))\frac{\D t}{t}<\infty.
\end{equation}
By~\cite[Thm~25.3 \& Prop.~25.4]{SatoBookLevy}, it remains to show that~\eqref{eq:int-3} is equivalent to $$\int_{\R^d \setminus \mathfrak{B}_{\bm{0}}(1)} |\bm{v}|^2\log(|\bm{v}|)\nu(\D \bm{v})<\infty.$$

\underline{Step 2.} In this step we will show that $\int_{\R^d \setminus \mathfrak{B}_{\bm{0}}(1)} |\bm{v}|^2\log(|\bm{v}|)\nu(\D \bm{v})<\infty$ implies~\eqref{eq:int-3}. In this case, it suffices to consider $\mathscr{A}=\mathscr{C}$ in $\varphi$. 
By definition, $\varphi(\bm{\Delta}(t))\le 2|\p(\bm{\sigma}\bm{Z} \in A_t)-\p(\bm{\Delta}(t)\bm{Z} \in A_t)|$ for each $t\ge 1$ and some $A_t \in \mathscr{C}$. Let $\sqrt{\bm\sigma}$ denote the unique symmetric positive definite matrix satisfying $\sqrt{\bm\sigma}^2=\bm\sigma$ and define the symmetric positive definite matrix $\bm{M}(t)\coloneqq\sqrt{\bm\sigma}\bm{\Delta}(t)^{-1}\sqrt{\bm\sigma}$, and the matrices
\begin{align*}
\bm{M}(s,t)
&\coloneqq s\bm{\sigma}^{-1}+(1-s)\bm{\Delta}(t)^{-1}
=\sqrt{\bm{\sigma}}^{-1}(s\bm{I}_d+(1-s)\bm{M}(t))\sqrt{\bm{\sigma}}^{-1},\\
D_t(s)
&\coloneqq\frac{\D}{\D s}\big[\det(\bm{M}(s,t))^{-1}\big]
=\det(\bm\sigma)\frac{\D}{\D s}\big[\det(s\bm{I}_d+(1-s)\bm{M}(t))^{-1}\big],
\end{align*}
for any $s \in [0,1]$ and  $t \ge 1$. For any fixed $t\ge 1$, define the function $\psi$ by 
\begin{equation}\label{eq:mvt_psi}
\psi(s)\coloneqq\p(\bm{M}(s,t)^{-1}\bm{Z}\in A_t)
=\int_{\R^d}\det(\bm{M}(s,t))^{-1}f(\bm{M}(s,t)\bm{v})\1_{A_t}(\bm{v})\D\bm{v}, \quad \text{ for }s \in [0,1],
\end{equation} 
where $f: \R^d \to (0,\infty)$ is the density of $\bm{Z}$. By the mean value theorem, there exists some $s^*_t \in [0,1]$, such that $\p(\bm{\sigma}\bm{Z} \in A_t)-\p(\bm{\Delta}(t)\bm{Z} \in A_t)=\psi(1)-\psi(0)=\psi'(s^*_t)$. Differentiating~\eqref{eq:mvt_psi} in $s$ yields
\begin{equation}
\label{eq:mvt_psi'}
\begin{split}
\psi'(s) &=\int_{A_t}\bigg(\bigg( \frac{\D}{\D s}\big[\det(\bm{M}(s,t))^{-1}\big]\bigg)f(\bm{M}(s,t)\bm{v})+\det(\bm{M}(s,t))^{-1}\frac{\D}{\D s}f(\bm{M}(s,t)\bm{v}) \bigg)
\D\bm{v}\\
&=\int_{A_t}\left(D_t(s)f(\bm{M}(s,t)\bm{v})-\frac{f(\bm{M}(s,t)\bm{v})}{\det(\bm{M}(s,t))}\bm{v}^\intercal\bm{M}(s,t)\big(\bm{\sigma}^{-1}-\bm{\Delta}(t)^{-1}\big)\bm{v} \right)
\D\bm{v},
\end{split} 
\end{equation}
which is well-defined for $s \in [0,1]$ by Lemma~\ref{lem:det_lemma}. Since $\varphi(\bm{\Delta}(t))\le 2 |\psi'(s^*_t)|$ for all $t\ge1$,~\eqref{eq:int-3} holds if 
\begin{align}\label{eq:sufficient_int}
\begin{aligned}
&\text{\nf (a)}\enskip \int_1^\infty \bigg| \int_{A_t}g_t(\bm{v})\bm{v}^\intercal\bm{M}(s^*_t,t)\big(\bm{\sigma}^{-1}-\bm{\Delta}(t)^{-1}\big)\bm{v} \D\bm{v} \bigg|\frac{\D t}{t}<\infty, \\ 
&\text{\nf (b)}\enskip \int_1^\infty |D_t(s^*_t)| \det(\bm{M}(s^*_t,t)) \p(\bm{M}(s^*_t,t)^{-1}\bm{Z}\in A_t)\frac{\D t}{t}<\infty,
\end{aligned}
\end{align} 
where $g_t(\bm{v})\coloneqq
\det(\bm{M}(s^*_t,t))^{-1}f(\bm{M}(s^*_t,t)\bm{v})$ for $t\ge 1$. We next show that the condition 
\begin{equation}
\label{eq:final_sufficient_int}
\int_1^\infty \trace(\bm{\sigma}-\bm{\Delta}(t)) t^{-1}\D t<\infty,
\end{equation}
implies both (a) and~(b) in~\eqref{eq:sufficient_int}.

Lemma~\ref{lem:det_lemma} implies that $D_t(s^*_t) \sim -\det(\bm\sigma)\trace(\bm{I}_d-\bm{M}(t))$ and $\det(\bm{M}(s^*_t,t))\sim \det(\bm\sigma)^{-1}$ as $t \to \infty$. These identities and the trivial bound $\p(\bm{M}(s^*_t,t)^{-1}\bm{Z}\in A_t) \le 1$ (for all $t\ge 1$) will imply \nf{(b)} in~\eqref{eq:sufficient_int} if we show that $\int_1^\infty |\trace(\bm{I}_d-\bm{M}(t))| t^{-1}\D t<\infty$. The cyclic invariance of the trace operator, the Cauchy--Schwarz inequality (applied to the inner product $\langle \bm{A},\bm{B}\rangle=\trace(\bm{B}^\intercal \bm{A})$  of matrices in $\bm{A},\bm{B}\in \R^{d\times d}$) and Lemma~\ref{lem:sq_of_PSD_matrices} (applied to $\bm{N}_1=\bm\Delta(t)^2$ and $\bm{N}_3=\bm\sigma^2$) yield 
\begin{equation}
\label{eq:Id-M_vs_sigma_Delta}
\begin{split}
|\trace(\bm{I}_d-\bm{M}(t))|
&=\big|\trace\big(\bm{I}_d-\sqrt{\bm{\sigma}}\bm{\Delta}(t)^{-1}\sqrt{\bm{\sigma}}\big)\big|
=\big|\trace\big(\sqrt{\bm\sigma}\big(\bm{\sigma}^{-1}-\bm{\Delta}(t)^{-1}\big)\sqrt{\bm{\sigma}}\big)\big|\\
&=\big|\trace\big(\big(\bm{\sigma}^{-1}-\bm{\Delta}(t)^{-1}\big)\bm\sigma\big)\big|
=\big|\trace\big(\bm{I}_d-\bm{\Delta}(t)^{-1}\bm\sigma\big)\big|
=\big|\trace\big(\bm{\Delta}(t)^{-1}\big(\bm{\Delta}(t)-\bm\sigma\big)\big)\big|\\
&\le\sqrt{\trace(\bm{\Delta}(t)^{-2})\trace((\bm{\sigma}-\bm{\Delta}(t))^2)}
\le\sqrt{\trace(\bm{\Sigma}(t)^{-1})}\,\trace(\bm{\sigma}-\bm{\Delta}(t)).
\end{split}
\end{equation}
Recalling $\bm{\Delta}(t)^2  
= \bm{\Sigma}(t)$
and
$\trace(\bm\Sigma(t)^{-1})\to\trace(\bm\sigma^{-2})$, it follows that~\eqref{eq:final_sufficient_int} indeed implies~\nf{(b)} in~\eqref{eq:sufficient_int}.

Next, consider~\nf{(a)} in~\eqref{eq:sufficient_int}.
The cyclic invariance of the trace operator, 
$\trace(a)=a$ for $a \in \R$ and definition $\bm{M}(t)=\sqrt{\bm{\sigma}}^{-1}\bm{\Delta}(t)\sqrt{\bm{\sigma}}^{-1}$ 
imply
\begin{align*}
\int_{A_t}g_t(\bm{v})\bm{v}^\intercal\bm{M}(s^*_t,t)\big(\bm{\sigma}^{-1}-\bm{\Delta}(t)^{-1}\big)\bm{v} \D\bm{v} 
& =
    \int_{A_t}g_t(\bm{v})\trace\big(\bm{v}^\intercal\bm{M}(s^*_t,t)\big(\bm{\sigma}^{-1}-\bm{\Delta}(t)^{-1}\big)\bm{v}\big) \D\bm{v} \\
    & =\int_{A_t}g_t(\bm{v}) \trace\big(\bm{v}\bm{v}^\intercal\bm{M}(s^*_t,t) \big(\bm{\sigma}^{-1}-\bm{\Delta}(t)^{-1}\big)\big)\D\bm{v} \\
    & = \trace\bigg(\bm{\Delta}(t)^{-1}\int_{A_t}g_t(\bm{v}) \bm{v}\bm{v}^\intercal\bm{M}(s^*_t,t) \D\bm{v}\cdot \bm{\sigma}^{-1}(\bm{\Delta}(t)-\bm{\sigma})\bigg).
\end{align*}
The Cauchy--Schwarz inequality and the cyclic invariance of the trace operator imply 
\begin{align*} 
&\bigg|\trace\bigg(\bm{\Delta}(t)^{-1}\bigg(\int_{A_t}g_t(\bm{v})\bm{v}\bm{v}^\intercal\D\bm{v}\bigg)\bm{M}(s^*_t,t)\bm{\sigma}^{-1}(\bm{\Delta}(t)-\bm{\sigma})\bigg)\bigg|\\
&\le \trace\bigg(
\bigg(\int_{A_t}g_t(\bm{v})\bm{v}\bm{v}^\intercal\D\bm{v}\bigg)^\intercal
\bm{\Delta}(t)^{-2}
\bigg(\int_{A_t}g_t(\bm{v})\bm{v}\bm{v}^\intercal 
\D\bm{v}\bigg)\bm{M}(s^*_t,t)\bm{\sigma}^{-2}\bm{M}(s^*_t,t)^\intercal\bigg)^{\frac{1}{2}}\trace\big((\bm{\sigma}-\bm{\Delta}(t))^2\big)^{\frac{1}{2}}.
\end{align*} 
We have 
$\int_{\R^d}g_t(\bm{v})\bm{v}\bm{v}^\intercal \D\bm{v} =\Var(\bm{M}(s_t^*,t)^{-1}\bm{Z})=\bm{M}(s_t^*,t)^{-2}\to\bm{\sigma}^{2}$ as $t \to \infty$ 
since $\bm{M}(s^*_t,t)\to\bm{\sigma}^{-1}$,
so the Cauchy--Schwarz inequality shows that the first trace in the second line of the display above is bounded by a positive finite constant. Since $\sqrt{\trace((\bm{\sigma}-\bm{\Delta}(t))^2) } \le \trace(\bm{\sigma}-\bm{\Delta}(t))$ by Lemma~\ref{lem:sq_of_PSD_matrices} (again with $\bm{N}_1=\bm\Delta(t)^2$ and $\bm{N}_3=\bm\sigma^2$), condition~\eqref{eq:final_sufficient_int} also implies~\nf{(a)} in~\eqref{eq:sufficient_int}. 

It remains to show that $\int_{\R^d \setminus \mathfrak{B}_{\bm{0}}(1)} |\bm{v}|^2\log(|\bm{v}|)\nu(\D \bm{v})<\infty$ implies~\eqref{eq:final_sufficient_int}. 
Define $b_1(t)\coloneqq\trace{(\bm\sigma-\bm{\Delta}(t))}
$ and $b_2(t)\coloneqq\trace(\bm\sigma^2-\bm{\Delta}(t)^2)
$ for $t\ge 1$. Assume without loss of generality that $b_1(t)>0$ for all $t$ large. (Indeed, if $b_1(t')=0$ for some $t'$, then $\bm{\sigma}=\bm{\Delta}(t)$ and $b_1(t)=0$ for all $t\ge t'$, implying~\eqref{eq:final_sufficient_int}.) 
The cyclic invariance of the trace operator gives $\trace(\bm{\sigma}\bm{\Delta}(t))=\trace(\bm{\Delta}(t)\bm{\sigma})$ and, by linearity, $b_2(t)=\trace(\bm{\sigma}^2-\bm{\Delta}(t)^2) =\trace((\bm{\sigma}-\bm{\Delta}(t))(\bm{\sigma}+\bm{\Delta}(t)))$. Since $\bm{\sigma}-\bm{\Delta}(t)$ is positive semi-definite and $\bm{\sigma}+\bm{\Delta}(t)$ is positive definite, the sub-multiplicative property of the Frobenius norm gives
\begin{equation*}
    b_2(t)
    =
    \trace((\bm{\sigma}-\bm{\Delta}(t))(\bm{\sigma}+\bm{\Delta}(t))) 
    \le 
    \trace((\bm{\sigma}-\bm{\Delta}(t)) \trace(\bm{\sigma}+\bm{\Delta}(t)) 
    =
    b_1(t) \trace(\bm{\sigma}+\bm{\Delta}(t)).
\end{equation*}

Similarly, let $\bm{\Lambda}(t)$ be the symmetric invertible matrix satisfying $\bm{\Lambda}(t)^2=\bm{\sigma}+\bm{\Delta}(t)$. By the linearity and cyclic invariance of the trace operator and the sub-multiplicativity of the Frobenius norm to obtain 
\begin{align*}
    b_1(t)
    &=
    \trace(\bm{\sigma}-\bm{\Delta}(t))
    =
    \trace(\bm{\Lambda}(t)^{-1}\bm{\Lambda}(t)(\bm{\sigma}-\bm{\Delta}(t))\bm{\Lambda}(t)\bm{\Lambda}(t)^{-1})\\
    &=\trace(\bm{\Lambda}(t)(\bm{\sigma}-\bm{\Delta}(t))\bm{\Lambda}(t)\bm{\Lambda}(t)^{-2})
    \le \trace(\bm{\Lambda}(t)(\bm{\sigma}-\bm{\Delta}(t))\bm{\Lambda}(t))\trace(\bm{\Lambda}(t)^{-2})\\
    &=\trace(\bm{\sigma}^2-\bm{\Delta}(t)^2)\trace((\bm{\sigma}+\bm{\Delta}(t))^{-1})
    =b_2(t)\trace((\bm{\sigma}+\bm{\Delta}(t))^{-1}),
\end{align*} 
since $\bm{\Lambda}(t)(\bm{\sigma}-\bm{\Delta}(t))\bm{\Lambda}(t)$ and $(\bm{\Lambda}(t)^{-1})^2$ are symmetric and positive semi-definite. Thus, we obtain
\begin{equation}\label{eq:b1_b2}
0<\frac{2}{\trace(\bm{\sigma}^{-1})}=\liminf_{t \to \infty} \frac{1}{\trace((\bm{\sigma}+\bm{\Delta}(t))^{-1})}
\le\liminf_{t\to\infty}\frac{b_2(t)}{b_1(t)}
\le\limsup_{t\to\infty}\frac{b_2(t)}{b_1(t)}
\le 
2\trace(\bm{\sigma}) <\infty,
\end{equation}
since $\bm{\Delta}(t) \to \bm{\sigma}$ as $t \to \infty$. In particular, $b_1(t)/b_2(t)<\trace(\bm{\sigma}^{-1})/2$ for all sufficiently large $t$.
Thus, $\int_1^\infty b_2(t) t^{-1}\D t<\infty$ implies  $\int_1^\infty b_1(t) t^{-1}\D t<\infty$. 
Hence, by Fubini's theorem, 
\begin{equation}
\label{eq:equiv_integrabil_2+log}
\begin{split}
\int_1^\infty \trace(\bm{\sigma}^2-\bm{\Delta}(t)^2) \frac{\D t}{t}
&=\int_1^\infty\trace\bigg(\int_{\R^d\setminus \mathfrak{B}_{\bm{0}}(\kappa\sqrt{t})}\bm{v}\bm{v}^\intercal\nu(\D\bm{v})\bigg)\frac{\D t}{t}\\
&=\int_{\R^d\setminus \mathfrak{B}_{\bm{0}}(\kappa)}|\bm{v}|^2 \int_1^{|\bm{v}|^2/\kappa^2}\frac{\D t}{t}\nu(\D \bm{v})
=\int_{\R^d\setminus \mathfrak{B}_{\bm{0}}(\kappa)}
|\bm{v}|^2\log(|\bm{v}|^2/\kappa^2)\nu(\D \bm{v}).
\end{split}
\end{equation}
Thus, assumption $\int_{\R^d\setminus \mathfrak{B}_{\bm{0}}(1)} |\bm{v}|^2\log(|\bm{v}|)\nu(\D \bm{v})<\infty$ implies~\eqref{eq:final_sufficient_int} and hence~\eqref{eq:sufficient_int}, concluding Step 2.

\underline{Step 3.} We now assume \eqref{eq:int-3} holds and show that $\int_{\R^d \setminus \mathfrak{B}_{\bm{0}}(1)} |\bm{v}|^2\log(|\bm{v}|)\nu(\D \bm{v})<\infty$. By \eqref{eq:b1_b2} and~\eqref{eq:equiv_integrabil_2+log}, it suffices to establish the condition in~\eqref{eq:final_sufficient_int}, given by $\int_1^\infty \trace(\bm{\sigma}-\bm{\Delta}(t)) t^{-1}\D t<\infty$. 

Recall $\bm{M}(t)=\sqrt{\bm\sigma}\bm{\Delta}(t)^{-1}\sqrt{\bm\sigma}$, $\bm{M}(s,t)= s\bm{\sigma}^{-1}+(1-s)\bm{\Delta}(t)^{-1}$ and $D_t(s)=\frac{\D}{\D s}[\det(\bm{M}(s,t))^{-1}]$. Let $U\in \mathcal{B}(\R)$ be as in the statement of Theorem~\ref{thm:int-2} and let $V\coloneqq\bm{\sigma}U^d$. By definition of $\varphi$, since $V \in \mathscr{A}$, we have $\varphi(\bm{\Delta}(t)) \ge |\p(\bm{\sigma}\bm{Z} \in  V)-\p( \bm{\Delta}(t)\bm{Z} \in V)|$. Hence, by the mean value theorem, as applied in~\eqref{eq:mvt_psi}--\eqref{eq:mvt_psi'}, condition~\eqref{eq:int-3} implies
$\int_1^\infty|h_1(t)+h_2(t)|t^{-1}\D t<\infty$, where 
\[
h_1(t)\coloneqq D_t(s^*_t)\det(\bm{M}(s^*_t,t)) \p(\bm{M}(s^*_t,t)^{-1}\bm{Z}\in V),\quad
h_2(t)\coloneqq \int_{V}g_t(\bm{v})\bm{v}^\intercal\bm{M}(s^*_t,t)\big(\bm{\sigma}^{-1}-\bm{\Delta}(t)^{-1}\big)\bm{v} \D\bm{v}
\]
for some $s^*_t\in[0,1]$ with $f$, $g_t$ are as in Step 2 above. We will show that $h_1(t)\sim c_1 \trace(\bm{I}_d-\bm{M}(t))$ and $h_2(t)\sim c_2 \trace(\bm{I}_d-\bm{M}(t))$ as $t\to\infty$ with multiplicative constants $c_1\coloneqq-\p(Z\in U)^d<0$ and $c_2\coloneqq\E[Z^2\1_U(Z)]>0$ for a standard Gaussian variable $Z$, thus satisfying $c_1+c_2\ne 0$ by assumption on $U$. These asymptotics will clearly imply $\int_1^\infty |\trace(\bm{I}_d-\bm{M}(t))|t^{-1}\D t<\infty$ and,  in turn, condition~\eqref{eq:final_sufficient_int}.

By Lemmas~\ref{lem:pdf_std_norm_asymp}(b)~\&~\ref{lem:det_lemma}, as $t \to \infty$, we have $\bm{M}(s^*_t,t)\to \bm{\sigma}^{-1}$, 
$\p(\bm{M}(s^*_t,t)^{-1}\bm{Z}\in V)\to\p(\bm{Z}\in U^d)$ and $D_t(s^*_t) \sim -\det(\bm{\sigma})\trace(\bm{I}_d-\bm{M}(t))$. Thus, 
using the cyclic invariance of the trace operator, we deduce that, as $t\to\infty$,
\begin{equation*}
h_1(t)
\sim-\trace(\bm{I}_d-\bm{M}(t)) \p(\bm{Z}\in U^d),
\quad
h_2(t)
\sim \trace \left(\bm{\sigma}^{-1}\int_{V}g_t(\bm{v})\bm{v}\bm{v}^\intercal \D\bm{v}\,\bm{\sigma}^{-1}\cdot\sqrt{\bm{\sigma}}^{-1}(\bm{I}_d-\bm{M}(t))\sqrt{\bm{\sigma}}\right),
\end{equation*} 
implying 
$h_1(t)\sim c_1 \trace(\bm{I}_d-\bm{M}(t))$  as $t\to\infty$ as claimed. Next, note that  
\begin{align*}
\bm{\sigma}^{-1}\int_{V}g_t(\bm{v})\bm{v}\bm{v}^\intercal \D\bm{v} \,\bm{\sigma}^{-1}
&= \E\left[\bm{\sigma}^{-1}\bm{M}(s^*_t,t)^{-1}\bm{Z}\bm{Z}^\intercal\bm{M}(s^*_t,t)^{-1}\bm{\sigma}^{-1}\1_{\{\bm\sigma^{-1}\bm{M}(s^*_t,t)^{-1}\bm{Z} \in U^d\}}\right]\\
&\to\E[\bm{Z}\bm{Z}^\intercal\1_{\{\bm{Z} \in U^d\}}]=c_2 \bm{I}_d.
\end{align*}
Thus, as $t\to\infty$, we have
\begin{align*}
&\trace \left(\bm{\sigma}^{-1}\int_{V}g_t(\bm{v})\bm{v}\bm{v}^\intercal \D\bm{v}\,\bm{\sigma}^{-1}\cdot\sqrt{\bm{\sigma}}^{-1}(\bm{I}_d-\bm{M}(t))\sqrt{\bm{\sigma}}\right)\\
&\qquad\qquad\sim 
c_2\trace \big(\sqrt{\bm{\sigma}}^{-1}(\bm{I}_d-\bm{M}(t))\sqrt{\bm{\sigma}}\big)
=c_2\trace \big(\bm{I}_d-\bm{M}(t)\big),
\end{align*}
implying the claim for $h_2$.
Thus, as stated above, we have
$\int_1^\infty |\trace(\bm{I}_d-\bm{M}(t))|t^{-1}\D t<\infty$.

Analogous argument to the one in~\eqref{eq:Id-M_vs_sigma_Delta}, using the Cauchy-Schwartz inequality and Lemma~\ref{lem:sq_of_PSD_matrices}, we have 
\begin{align*}
\trace(\bm{\sigma}-\bm{\Delta}(t))
&=\big|\trace\big(\bm{\Delta}(t)\bm{\Delta}(t)^{-1}\big(\bm{\Delta}(t)-\bm\sigma\big)\big)\big|\\
&\le \sqrt{\trace(\bm{\Delta}(t)^2)} \,\big|\trace\big(\bm{\Delta}(t)^{-1}\big(\bm{\Delta}(t)-\bm\sigma\big)\big)\big|
=\sqrt{\trace(\bm{\Sigma}(t))}|\trace(\bm{I}_d-\bm{M}(t))|.
\end{align*}
Since  $\bm{\Sigma}(t) \to \bm{\sigma}^2$ as $t \to \infty$ and $\trace(\bm{\sigma}^2)>0$,
this implies $\int_1^\infty \trace(\bm\sigma-\bm{\Delta}(t)) t^{-1}\D t<\infty$, concluding the proof of the theorem.
\end{proof}

\section{Proof of the implication {\texorpdfstring{$\mathrm{(b)}\!\!\implies\!\!\mathrm{(a)}$}{(b) implies (a)}} in Theorem~\ref{thm:int-0}}
\label{sec:proof_Thm_3.1}

The main result of this section is Theorem~\ref{thm:int-3}, which establishes the implication (b)$\implies$(a) in Theorem~\ref{thm:int-0}. Theorem~\ref{thm:int-3} is an extension of~\cite[Thm~1]{MR334308} (see also~\cite[Thm~1]{MR243636} and~\cite[Thm]{doi:10.1137/1118017}) to  (continuous-time)  multidimensional L\'evy processes. To establish this result, we show that the problem can be reduced to the case of a real-valued (one-dimensional) symmetric L\'evy process and prove this simpler case in Theorem~\ref{thm:int-3_one-dim_version}. 

The main ingredients for the proof of Theorem~\ref{thm:int-3_one-dim_version} are Lemmas~\ref{lem:reg_vary_mu+finitemoment} \&~\ref{lem:maintechnicalbit_proof}, which were partly inspired by 
the proofs of~\cite[Thm~1]{MR334308} and~\cite[Thm~1]{MR243636}. However, our lemmas and steps are not easy adaptations of the arguments and results in~\cite{MR334308,MR243636}. In fact, they are vastly different in structure, mainly because the continuous-time nature of our problem presents serious technical difficulties. (For instance, a summable sequence must tend to zero, but an integrable function on $[1,\infty)$ need not tend to $0$.) Moreover, most of the functions involved are not necessarily monotone and cannot be assumed to be monotone without loss of generality, unlike in~\cite[p.~90]{MR334308}, where the analogue of the normalising function $B$ is simply assumed to be monotone. In turn, the continuous time structure of our problem requires us to both impose mild regularity assumptions on the function $B$ in Theorem~\ref{thm:int-3_one-dim_version} and to rely heavily on the structure of L\'evy processes via the L\'evy--Khintchine formula. The multidimensional nature of our problem adds to the complexity 
of the argument and is dealt with in the proof of Theorem~\ref{thm:int-3} below.

\begin{theorem}\label{thm:int-3}
Let $\bm{X}
$ be a genuinely $d$-dimensional L\'evy process and $\bm{Z}$ be a $d$-dimensional standard Gaussian random vector with $\mathscr{A}$ equal to either $\mathscr{K}$ or $\mathscr{C}$. Assume there exist measurable functions $\bm{A}:[1,\infty) \to \R^{d}$ and $\bm{B}:[1,\infty) \to \R^{d\times d}$ such that
\[
t \mapsto t^{-1}d_\mathscr{A}(\bm{X}_t-\bm{A}(t)), \bm{B}(t)\bm{Z} )\in \Loc(+\infty).
\]
Assume also that $\bm{B}(t)$ is invertible for all sufficiently large $t$ and that the limits $\bm{e}_j^\intercal\bm{B}(t)^\intercal \bm{B}(t) \bm{e}_j \to \infty$, for $j\in\{1,\ldots,d\}$, and
$\bm{B}(t)^{-1}\bm{B}(f(t))\to \bm{I}_d$, for any non-decreasing function $f$ with $f(t)/t\to 1$, hold as $t \to \infty$. Then $\E[|\bm{X}_1|^2]<\infty$ and $d_\mathscr{C}(\bm{X}_t-t\E\bm{X}_1, \bm{B}(t)\bm{Z} )\to 0$ as $t\to\infty$.
\end{theorem}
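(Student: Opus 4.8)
The plan is to reduce Theorem~\ref{thm:int-3} to its one-dimensional symmetric counterpart, Theorem~\ref{thm:int-3_one-dim_version}, by symmetrisation and projection onto the coordinate axes, and then to deduce the convergence $d_\mathscr{C}(\bm X_t-t\E\bm X_1,\bm B(t)\bm Z)\to0$ from $\E[|\bm X_1|^2]<\infty$ together with Theorem~\ref{thm:int-1}. First I would symmetrise: let $\bm X'$ be an independent copy of $\bm X$ and put $\bm X^{\mathrm s}:=\bm X-\bm X'$. Since $\mathscr{K}$ and $\mathscr{C}$ are translation-invariant, $d_\mathscr{A}(\bm\xi-\bm\xi',\bm\eta-\bm\eta')\le d_\mathscr{A}(\bm\xi,\bm\eta)+d_\mathscr{A}(\bm\xi',\bm\eta')$ whenever $\bm\xi\perp\bm\xi'$ and $\bm\eta\perp\bm\eta'$ (condition on the primed variable and use $A-\bm x'\in\mathscr{A}$ for $A\in\mathscr{A}$); applying this with $\bm\eta=\bm\eta'=\bm B(t)\bm Z$ and $\bm B(t)(\bm Z-\bm Z')\eqd\sqrt2\,\bm B(t)\bm Z$ shows $t\mapsto t^{-1}d_\mathscr{A}(\bm X^{\mathrm s}_t,\sqrt2\,\bm B(t)\bm Z)\in\Loc(+\infty)$, the centring $\bm A$ is eliminated, and the hypotheses on $\bm B$ are unaffected by the factor $\sqrt2$. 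Because $\E[|\bm X_1|^2]<\infty$ iff $\int_{|\bm v|>1}|\bm v|^2\,\nu(\D\bm v)<\infty$ (\cite[Thm~25.3]{SatoBookLevy}) and the symmetrised L\'evy measure has comparable tails, it suffices to prove $\E[|\bm X^{\mathrm s}_1|^2]<\infty$.

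Next I would fix $j\in\{1,\dots,d\}$ and pass to one dimension. If $X^{\mathrm s}_{1,j}$ is a.s.\ constant (hence $0$), then $\E[(X^{\mathrm s}_{1,j})^2]=0<\infty$; otherwise $Y:=(X^{\mathrm s}_{t,j})_{t\ge0}$ is a nondegenerate symmetric real-valued L\'evy process. Since $\{\bm x:x_j\le a\}$ is convex and an increasing limit of hyper-rays, $d_\mathscr{K}(\xi_j,\eta_j)\le d_\mathscr{A}(\bm\xi,\bm\eta)$ for $\mathscr{A}\in\{\mathscr{K},\mathscr{C}\}$, so $t\mapsto t^{-1}d_\mathscr{K}(Y_t,b_j(t)Z)\in\Loc(+\infty)$, where $b_j(t):=\sqrt2\,(\bm e_j^\intercal\bm B(t)\bm B(t)^\intercal\bm e_j)^{1/2}$ is the standard deviation of the $j$-th coordinate of $\sqrt2\,\bm B(t)\bm Z$ and $Z$ is one-dimensional standard normal. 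It then remains to verify the hypotheses of Theorem~\ref{thm:int-3_one-dim_version}. Slow variation of $b_j$ follows from $\bm B(t)^{-1}\bm B(f(t))\to\bm I_d$: writing $\bm B(f(t))=\bm B(t)(\bm I_d+\bm E(t))$ with $\opnorm{\bm E(t)}\to0$ gives $|b_j(f(t))^2-b_j(t)^2|\le\opnorm{\bm E(t)+\bm E(t)^\intercal+\bm E(t)\bm E(t)^\intercal}\,b_j(t)^2=o(b_j(t)^2)$. For $b_j(t)\to\infty$: if not, pick $u_m\to\infty$ with $b_j(u_m)\to L<\infty$; from $\int^\infty t^{-1}d_\mathscr{K}(Y_t,b_j(t)Z)\,\D t<\infty$ there is a threshold $\eta\downarrow0$ whose exceedance set $\{t:d_\mathscr{K}(Y_t,b_j(t)Z)\ge\eta(t)\}$ has finite logarithmic measure, so its complement $S^\ast$ satisfies $\log|[T,\infty)\setminus S^\ast|\to0$; combining with the slow variation of $b_j$ produces $v_m\to\infty$ with $v_m/u_m\to1$, $b_j(v_m)\to L$ and $d_\mathscr{K}(Y_{v_m},b_j(v_m)Z)\to0$, whence $Y_{v_m}\cid N(0,L^2)$. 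But $\phi_{Y_t}(\theta)=e^{t\psi(\theta)}$ with $\psi\le0$ real, so along $v_m\to\infty$ this converges pointwise only to $\1\{\psi(\theta)=0\}$; matching with the Gaussian limit forces $\psi\equiv0$ near the origin, hence $\psi\equiv0$ by L\'evy--Khintchine, i.e.\ $Y\equiv0$, a contradiction. Therefore $b_j(t)\to\infty$, Theorem~\ref{thm:int-3_one-dim_version} gives $\E[(X^{\mathrm s}_{1,j})^2]<\infty$, and summing over $j$ yields $\E[|\bm X^{\mathrm s}_1|^2]=\sum_j\E[(X^{\mathrm s}_{1,j})^2]<\infty$, hence $\E[|\bm X_1|^2]<\infty$.

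For the convergence statement I may assume $\E\bm X_1=\bm 0$. By Theorem~\ref{thm:int-1}, $\bm\Delta(t)\to\bm\sigma$, and the equivalence between weak convergence and convex-distance convergence to the absolutely continuous Gaussian limit (\cite{RaoWeak}, Theorem~\ref{thm:appendix_conv_weak}), we get $\bm X_t/\sqrt t\cid\bm\sigma\bm Z$. On the full-logarithmic-density set on which the integrand of the hypothesis tends to $0$, $\bm X_t$ is simultaneously close to $\bm A(t)+\bm B(t)\bm Z$ in $d_\mathscr{A}$ and to $\sqrt t\,\bm\sigma\bm Z$ in $d_\mathscr{C}$; the multivariate convergence-of-types theorem for Gaussians (coordinatewise and pairwise when $\mathscr{A}=\mathscr{K}$) forces $\bm B(t)=\sqrt t\,\bm\sigma\bm O(t)(\bm I_d+o(1))$ for suitable orthogonal $\bm O(t)$ and $\bm A(t)=o(\sqrt t)$ along that set, so $d_\mathscr{C}(\bm X_t,\bm B(t)\bm Z)\to0$ there. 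To extend to all $t\to\infty$, given $t$ pick $v\ge t$ in that set with $v/t\to1$; then $\bm B(v)^{-1}\bm B(t)\to\bm I_d$ by the assumption on $\bm B$, and $\bm X_v=\bm X_t+(\bm X_v-\bm X_t)$ with the independent increment satisfying $\E|\bm X_v-\bm X_t|\le((v-t)\trace\bm\sigma^2)^{1/2}=o(\sqrt t)$; since $\bm X_t$ is asymptotically Gaussian at scale $\sqrt t$, the convex distance between $\bm X_t$ and its translates by such an increment is $o(1)$ (gradient bound for the Gaussian density as in part~(c) of the proof of Theorem~\ref{thm:int-1} and Lemma~\ref{lem:pdf_std_norm_asymp}), so $d_\mathscr{C}(\bm X_t,\bm B(t)\bm Z)\le d_\mathscr{C}(\bm X_t,\bm X_v)+d_\mathscr{C}(\bm X_v,\bm B(v)\bm Z)+d_\mathscr{C}(\bm B(v)\bm Z,\bm B(t)\bm Z)\to0$.

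The deep part is Theorem~\ref{thm:int-3_one-dim_version} itself — the continuous-time analogue of Heyde's theorem~\cite{MR334308,MR243636} — established via Lemmas~\ref{lem:reg_vary_mu+finitemoment} and~\ref{lem:maintechnicalbit_proof}: in continuous time, integrability of $t^{-1}d_\mathscr{K}(Y_t,b(t)Z)$ does not force the distance to $0$, so one must exploit the L\'evy--Khintchine representation and the mild regularity of $b$ rather than a ``summable sequence tends to $0$'' argument, and this is where the main effort lies. Within the present reduction the two delicate points are the proof that $b_j(t)\to\infty$ in the second paragraph (which is exactly why both the L\'evy structure and the full-density set are invoked) and the bootstrap from the full-density set to all $t$ in the third.
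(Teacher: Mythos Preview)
Your reduction for $\E[|\bm X_1|^2]<\infty$ is essentially the paper's: symmetrise, project, invoke Theorem~\ref{thm:int-3_one-dim_version}. The paper reverses the order---it first projects onto an arbitrary unit vector $\bm v$ (not only the axes), obtaining $B_{\bm v}(t)\coloneqq|\bm v^\intercal\bm B(t)|$ and integrability of $t^{-1}d_\mathscr{K}(X_t^{\bm v}-A^{\bm v}(t),B_{\bm v}(t)Z)$, and only then symmetrises the one-dimensional process via Lemma~\ref{lem:split-K-dist}. This matters because it feeds directly into the convergence step below. Your separate argument that $b_j(t)\to\infty$ is bypassed in the paper, which simply reads divergence off the hypothesis $\bm e_j^\intercal\bm B(t)^\intercal\bm B(t)\bm e_j\to\infty$; you correctly notice that the standard deviation of the $j$-th coordinate involves $\bm B\bm B^\intercal$ rather than $\bm B^\intercal\bm B$, a point the paper glosses over.

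The convergence part is where your proposal has a genuine gap. The paper's route is much shorter and avoids every delicate step you introduce: Theorem~\ref{thm:int-3_one-dim_version} delivers not only $\E[Y_1^2]<\infty$ but also $d_\mathscr{K}(B_{\bm v}(t)^{-1}Y_t,Z)\to0$; the standard CLT gives $d_\mathscr{K}(Y_t/\sqrt{t\Var(Y_1)},Z)\to0$, so Lemma~\ref{lem:asymptotic-uniqueness} pins down $B_{\bm v}(t)\sim\sqrt{t\Var(X_1^{\bm v})}$, and a second application of Lemma~\ref{lem:asymptotic-uniqueness} transfers convergence to the un-symmetrised process, yielding $(X_t^{\bm v}-t\E X_1^{\bm v})/B_{\bm v}(t)\cid Z$ for every unit $\bm v$. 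Theorem~\ref{thm:appendix_conv_weak} then upgrades this to $d_\mathscr{C}\to0$. No density-set selection, no multivariate convergence-of-types, no bootstrap. Your bootstrap has concrete problems: (i) the ``full-logarithmic-density set'' is never constructed---Lemma~\ref{lem:integrable-sequence-to0} only produces a \emph{sequence} $t_n$ with $t_{n+1}/t_n\to1$, not a set of full logarithmic density; (ii) a convergence-of-types argument under $d_\mathscr{K}$ in $\R^d$ does not straightforwardly determine $\bm B(t)$ up to an orthogonal factor (Kolmogorov distance controls only finitely many half-space directions); (iii) the estimate $d_\mathscr{C}(\bm X_t,\bm X_v)=o(1)$ for $v/t\to1$ is circular---controlling the effect on convex-set probabilities of perturbing by an independent increment of size $o(\sqrt t)$ requires $\bm X_t/\sqrt t$ to already be close to an absolutely continuous law, which is precisely what you are proving.
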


Under the assumptions of Theorem~\ref{thm:int-3}, by Theorem~\ref{thm:int-1}, the functions $\bm{B}(t)=\bm{B}_c(t)=\sqrt{t}\bm{\Delta}(t)$ and $\bm{A}(t) = \bm{A}_c(t)= t\E[\bm{X}_1]$ also satisfy the conditions of Theorem~\ref{thm:int-3}. The proof of Theorem~\ref{thm:int-3} will be essentially reduced to establishing the following one-dimensional version. 

\begin{theorem}\label{thm:int-3_one-dim_version}
Let $Y=(Y_t)_{t\ge0}$ be a real-valued symmetric L\'evy process, and let $Z$ be a standard Gaussian random variable. Assume there exists a measurable function $B:[1,\infty) \to \R$ such that
\[
t \mapsto t^{-1}d_{\mathscr{K}}(B(t)^{-1}Y_t,  Z )\in \Loc(+\infty),
\]
where $B(t)$ is non-zero for all sufficiently large $t$ and $ B(t) \to \infty$ and $B(t)^{-1} B(f(t))\to 1$ as $t \to \infty$ for any non-decreasing $f$ with $f(t)/t\to 1$. Then $\E[Y_1^2]<\infty$ and $d_{\mathscr{K}}( B(t)^{-1}Y_t,  Z )\to 0$ as $t\to\infty$.
\end{theorem}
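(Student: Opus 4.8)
The plan is to establish the two assertions in turn, obtaining $d_{\mathscr K}(B(t)^{-1}Y_t,Z)\to0$ along the way and then arguing by contradiction for $\E[Y_1^2]<\infty$. By symmetry of $Y$ we may take $B(t)>0$ for all large $t$; write $(\sigma^2,0,\nu)$ for its generating triplet, $\psi$ for the (real, non-positive) characteristic exponent, and $W(x)\coloneqq\sigma^2+\int_{|v|\le x}v^2\,\nu(\D v)$ for the truncated variance, so that $\E[Y_1^2]<\infty$ if and only if $W(\infty)<\infty$. First I would extract a good ``block'' sequence: the substitution $t=e^s$ turns the hypothesis into integrability of $s\mapsto d_{\mathscr K}(B(e^s)^{-1}Y_{e^s},Z)$ on a half-line, so each of its super-level sets has finite Lebesgue measure; a diagonal argument over vanishing levels and spacings then produces $t_k\uparrow\infty$ with $t_{k+1}/t_k\to1$ and $d_{\mathscr K}(B(t_k)^{-1}Y_{t_k},Z)\to0$, i.e.\ $B(t_k)^{-1}Y_{t_k}\cid Z$.

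Next I would interpolate to all times. On each block $[t_k,t_{k+1}]$ the ratio $t/t_k$ tends to $1$ uniformly, so the hypothesis $B(t)^{-1}B(f(t))\to1$, applied to a non-decreasing $f$ threading the blocks, gives $B(t)/B(t_k)\to1$ uniformly on the block. Since $\sup_{t\in[t_k,t_{k+1}]}|Y_t-Y_{t_k}|\eqd\sup_{s\le t_{k+1}-t_k}|Y_s|$, L\'evy's inequality for symmetric processes bounds this by $2\,\p(|Y_{t_{k+1}-t_k}|>\,\cdot\,)$, and factoring characteristic functions, $\chi_{Y_{t_{k+1}}}=\chi_{Y_{t_k}}\chi_{Y_{t_{k+1}}-Y_{t_k}}$, forces $Y_{t_{k+1}-t_k}/B(t_k)\cip0$; together with Slutsky's theorem this upgrades the convergence to $B(t)^{-1}Y_t\cid Z$ as $t\to\infty$, hence (the limit having a density) $d_{\mathscr K}(B(t)^{-1}Y_t,Z)\to0$, which is the second assertion. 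It also places $Y$ in the domain of attraction of the normal law, so by the classical Gnedenko--Kolmogorov criterion $W$ is slowly varying at $\infty$ and $B(t)^2\sim tW(B(t))$. If $W(\infty)<\infty$ we are done; henceforth I assume $W(\infty)=\infty$, so $W$ is unbounded and slowly varying and $B$ is regularly varying of index $1/2$, and I seek a contradiction.

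Fix large constants $x>0$ and $c$ and split $Y_t=S_t+J_t$, where $J_t$ collects the jumps of $Y$ of size exceeding $cB(t)$ — a symmetric compound Poisson process of rate $\lambda_t\coloneqq t\,\ov\nu(cB(t))\to0$, independent of $S_t$, with $S_t/B(t)\cid Z$ as well since $\{J_t\ne0\}$ is rare. Conditioning on the jumps of $J_t$ and symmetrising $\nu$ on $\{|v|>cB(t)\}$ gives, to leading order in $\lambda_t$,
\[
\p\big(Y_t/B(t)>x\big)-\p\big(S_t/B(t)>x\big)=t\!\int_{v>cB(t)}\!\big[g_t(x-\tfrac{v}{B(t)})+g_t(x+\tfrac{v}{B(t)})-2g_t(x)\big]\,\nu(\D v)+O(\lambda_t^2),
\]
where $g_t(y)\coloneqq\p(S_t/B(t)>y)$ is non-increasing and converges to $\p(Z>y)$ uniformly on compacts; choosing $x$ and $c$ so that $\p(Z>x-c)>2\,\p(Z>x)$ makes the integrand bounded below by a positive constant for all $v>cB(t)$ and $t$ large, whence the left-hand side is $\ge c_0\lambda_t$ for some $c_0>0$. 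The decisive step — flagged below — is to promote this to $d_{\mathscr K}(B(t)^{-1}Y_t,Z)\ge c_1\,t\,\ov\nu(cB(t))$ for all large $t$. Granting it, a change of variables driven by $B(t)^2\sim tW(B(t))$ (legitimate by the regular variation of $B$) yields
\[
\int^\infty t^{-1}d_{\mathscr K}(B(t)^{-1}Y_t,Z)\,\D t\ \gtrsim\ \int^\infty\ov\nu(cB(t))\,\D t\ \asymp\ \int^\infty\frac{r\,\ov\nu(r)}{W(r)}\,\D r\ \ge\ \int^\infty\frac{\tfrac12\,G'(r)}{\sigma^2+G(r)}\,\D r,
\]
where $G(r)\coloneqq\int_0^r2u\,\ov\nu(u)\,\D u$ satisfies $W(r)\le\sigma^2+G(r)$ and $G'(r)=2r\,\ov\nu(r)$ by Fubini; since $G(\infty)=\int v^2\,\nu(\D v)=W(\infty)-\sigma^2=\infty$, the last integral is $+\infty$, contradicting the hypothesis. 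Hence $W(\infty)<\infty$, i.e.\ $\E[Y_1^2]<\infty$.

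The main obstacle is the lower bound $d_{\mathscr K}(B(t)^{-1}Y_t,Z)\gtrsim t\,\ov\nu(cB(t))$ of the previous paragraph. The ``one big jump'' correction has the correct order $\lambda_t$, non-integrable against $t^{-1}\D t$ once $W(\infty)=\infty$; but $\lambda_t$ is also precisely the order of the error made in replacing $Y_t$ by the truncated process $S_t$, so the positivity cannot be harvested by a naive triangle inequality and must instead be extracted by a careful, scale-by-scale comparison of the distribution functions of $Y_t/B(t)$ and $Z$, using the symmetry of $Y$ and the slow variation of $W$ in an essential way; this is where the technical lemmas underlying Theorem~\ref{thm:int-3_one-dim_version} do their work. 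A secondary subtlety is the interpolation above, needed precisely because an $L^1([1,\infty),t^{-1}\D t)$ function need not converge to $0$ — the continuous-time feature with no counterpart in the classical discrete arguments.
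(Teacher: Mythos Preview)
Your extraction of the block sequence $t_k$ and the interpolation to all $t$ runs parallel to the paper's Lemmas~\ref{lem:integrable-sequence-to0}--\ref{lem:reg_vary_mu+finitemoment} (the paper interpolates via regular variation of the characteristic exponent rather than your L\'evy-maximal-inequality argument, but both land in the same place and recover $B(t)^2\sim tW(B(t))$ and the index-$1/2$ regular variation of $B$). The contradiction argument, however, is genuinely different from the paper's, and the gap you flag is real and unfilled.

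Your proposed pointwise lower bound $d_{\mathscr K}(B(t)^{-1}Y_t,Z)\ge c_1\, t\,\ov\nu(cB(t))$ cannot be obtained by the mechanism you sketch: the one-big-jump correction $\p(Y_t/B(t)>x)-\p(S_t/B(t)>x)$ is indeed of order $\lambda_t$, but so, in general, is the residual $\p(S_t/B(t)>x)-\p(Z>x)$ (under $W(\infty)=\infty$ there is no Berry--Esseen rate for the truncated process better than what the slowly varying $W$ allows), and no triangle inequality separates signal from noise at a fixed $x$. You acknowledge this, but supply no device to overcome it --- and the paper does not supply one either, because it avoids pointwise lower bounds entirely. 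Its argument is Fourier-analytic: starting from $\varphi_{\xi_t}(x)-e^{-x^2/2}=-ix\int_\R (F_t-\Phi)(y)e^{ixy}\D y$, it pairs $F_t-\Phi$ against suitable test functions and uses Parseval--Plancherel to convert the $t^{-1}\D t$-integrability of $d_{\mathscr K}$ into the condition $\int_{(1,\infty)}y^2/\ell(1/y)\,\nu_Y(\D y)<\infty$ (Lemma~\ref{lem:maintechnicalbit_proof}, whose key step isolates the combination $2\mu_t(x)-\tfrac12\mu_t(2x)=2t\int(1-\cos(\cdot))^2\D\nu_Y$ to extract a fourth-moment-of-jumps quantity). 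A Kronecker-type lemma (Lemma~\ref{lem:kronecker}) then forces $\wt\ell(1/B(t))^{-1}\int_{(0,B(t)]}x^2\nu_Y(\D x)\to 0$, contradicting the convergence of generating triplets required by $\xi_t\cid Z$. This route never bounds $d_{\mathscr K}$ from below at any single time; it is the $L^2$ structure of $F_t-\Phi$ that makes the big- and small-jump contributions separable. Your final integral manipulation (the change of variables and the comparison with $\int G'/(\sigma^2+G)$) is correct and would finish the argument if the lower bound held, so the plan is not misguided --- but the flagged step is a genuine missing idea, and the paper's technical lemmas implement a different strategy rather than filling it.
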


Note that a symmetric L\'evy process has zero-mean, which is why no centering term is needed in Theorem~\ref{thm:int-3_one-dim_version}. The following technical lemmas, partially inspired by those in~\cite[Sec.~2]{MR334308}, are required in the proof of Theorem~\ref{thm:int-3_one-dim_version}. Despite their elemental nature, we failed to find them in the literature.

\begin{lemma}\label{lem:integrable-sequence-to0}
Let $g:[1,\infty)\to\mathbb{R}$ be measurable and $\int_{1}^{\infty}|g(t)|t^{-1}\,\D t<\infty$. Then there exists an increasing sequence $(t_n)_{n\in\N}$ in $[1,\infty)$ satisfying $t_n\to\infty$, $\;t_{n+1}/t_n\to 1$ and $g(t_n)\to 0$ as $n\to\infty$. 
\end{lemma}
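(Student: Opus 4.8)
The plan is to strip away the scale-invariant weight by the change of variables $t=e^{u}$. Setting $h(u)\coloneqq g(e^{u})$ for $u\ge0$, the hypothesis becomes $\int_{0}^{\infty}|h(u)|\,\D u=\int_{1}^{\infty}|g(t)|t^{-1}\,\D t<\infty$, and it suffices to construct an increasing sequence $(u_{n})_{n\in\N}$ in $[0,\infty)$ with $u_{n}\to\infty$, $u_{n+1}-u_{n}\to0$ and $h(u_{n})\to0$; then $t_{n}\coloneqq e^{u_{n}}$ satisfies $t_{n+1}/t_{n}=e^{u_{n+1}-u_{n}}\to1$ and $g(t_{n})=h(u_{n})\to0$, which is exactly the claim.

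To produce the $u_{n}$ I would argue scale by scale. Fix $m\in\N$ and partition $[0,\infty)$ into the intervals $I_{m,j}\coloneqq[j/m,(j+1)/m)$, $j\in\N_{0}$. Since $\sum_{j\ge0}\int_{I_{m,j}}|h|=\int_{0}^{\infty}|h|<\infty$, the general term tends to $0$, so there is $J_{m}\in\N$ with $\int_{I_{m,j}}|h|\le m^{-2}$ for all $j\ge J_{m}$. For such $j$, Markov's inequality gives $\Leb\bigl(\{u\in I_{m,j}:|h(u)|>2/m\}\bigr)\le\tfrac{m}{2}\int_{I_{m,j}}|h|\le\tfrac1{2m}<\Leb(I_{m,j})$, so one may fix a point $v_{m,j}\in I_{m,j}$ with $|h(v_{m,j})|\le2/m$. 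The point of this step is that in continuous time integrability of $g(t)/t$ does \emph{not} force $g$ to vanish, so one cannot hope for pointwise smallness of $h$ and must instead extract good points from an averaging bound.

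Next I would pick a strictly increasing sequence of cutoffs $c_{m}\uparrow\infty$ with $c_{m+1}-c_{m}\ge2$ and $c_{m}\ge J_{m}/m$ (e.g.\ $c_{m}\coloneqq2m+\max\{J_{1},J_{2}/2,\dots,J_{m}/m\}$), and at scale $m$ retain exactly the points $v_{m,j}$ whose interval $I_{m,j}$ lies inside $[c_{m},c_{m+1})$; by the choice of $c_{m}$ this index set is non-empty and every such $j$ exceeds $J_{m}$, so the retained points are legitimate. The sequence $(u_{n})$ is then obtained by concatenating, for $m=1,2,3,\dots$, the retained points of scale $m$ in increasing order.

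The remaining work—and the only real obstacle—is the bookkeeping that this concatenation does the job. Monotonicity holds because within scale $m$ the retained points lie in distinct consecutive subintervals of length $1/m$, while the last retained point of scale $m$ is $<c_{m+1}$ and the first retained point of scale $m+1$ is $\ge c_{m+1}$. The gaps vanish because consecutive differences within scale $m$ are $<2/m$ and the gap across scales $m\to m+1$ is $O(1/m)$, so for every $\ve>0$ all but finitely many gaps are $<\ve$. One has $u_{n}\to\infty$ since the retained points of scale $m$ cover a subset of $[c_{m},c_{m+1})$ with $c_{m}\to\infty$. Finally $h(u_{n})\to0$ because each $u_{n}$ equals some $v_{m,j}$ with $|h(v_{m,j})|\le2/m$ and the scale index $m$ tends to infinity along the sequence. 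This completes the plan.
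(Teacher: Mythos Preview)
Your proof is correct and follows essentially the same approach as the paper: both make the substitution $t=e^{u}$ to reduce to $h\in L^{1}([0,\infty))$, then use the averaging/selection principle (small integral over a short interval forces a point with small value) at successively finer scales to build the sequence $(u_n)$. The only difference is bookkeeping---the paper constructs $(u_n)$ recursively with an adaptive scale index $m(n)=\max\{m\le n:S_m\le u_n\}$, whereas you predetermine windows $[c_m,c_{m+1})$ and concatenate the scale-$1/m$ points within each; both executions are equally valid and comparably short.
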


\begin{proof}
Define $h:[0,\infty)\to[0,\infty)$ by $h(u):=|g(e^u)|$. Then $h\ge0$ is measurable and integrable: $\int_{0}^{\infty} h(u)\,\D u=\int_{1}^{\infty}|g(t)|t^{-1}\,\D t<\infty$. Hence, $\int_x^\infty h(u)\D u\to 0$ as $x\to\infty$ by dominated convergence, so that, for every $\varepsilon>0$, 
\begin{equation}
\label{eq:Int_control}
\exists S(\varepsilon)\in (0,\infty)
\quad\text{such that}\quad
\int_{x}^{x+\delta} h(u)\,du\le \varepsilon
\quad\text{for all}\quad
x\ge S(\varepsilon),\,\delta>0.  
\end{equation}
Moreover,
\begin{equation}
\label{eq:selection}
\text{if}\enskip
\int_{x}^{x+\delta} h(u) \D u\le \varepsilon
\enskip \text{for some $x\in[0,\infty)$, then there exists $y\in[x,x+\delta]$ with
$h(y)\le \frac{\varepsilon}{\delta}$.}
\end{equation}

Fix sequences $\delta_n\da 0$ and $\upsilon_n\da 0$ with $\sum_{n\in\N}\delta_n=\infty$. Set $\varepsilon_n\coloneqq \delta_n\upsilon_n$, $n\in\N$, and let $S_n\coloneqq S(\varepsilon_n)<\infty$ be as in~\eqref{eq:Int_control}. We now recursively construct an increasing sequence $(u_n)_{n\in\N}$ (that will yield the required sequence $t_n=e^{u_n}$). Pick $u_1\coloneqq S_1$. Assume we have defined some $u_n\ge u_1$ for some $n\ge 1$ and set 
\[
m(n)\coloneqq\max\{\,m\le n:\; S_m\le u_n\,\}\le n.
\]
Since $S_1=u_1\le u_n$, $m(n)$ is well defined and
$u_n\ge S_{m(n)}$. By~\eqref{eq:Int_control} (with $x=u_n+\delta_{m(n)}$,  $\delta=\delta_{m(n)}$),  
\[
\int_{u_n+\delta_{m(n)}}^{u_n+2\delta_{m(n)}} h(u)\D u\le \varepsilon_{m(n)},
\quad n\in\N.
\]
Hence, by~\eqref{eq:selection}, we may pick $u_{n+1}$ to satisfy
\begin{equation}
\label{eq:choice_of_n+1}
u_{n+1}\in\big[u_n+\delta_{m(n)},\,u_n+2\delta_{m(n)}\big]
\quad\text{and}\quad
h(u_{n+1})\le \frac{\varepsilon_{m(n)}}{\delta_{m(n)}}=\upsilon_{m(n)}.
\end{equation}

Since $m(n)\le n$, we have $u_{n+1}-u_n\ge \delta_{m(n)}\ge \delta_n$, implying $u_n\ge u_1+\sum_{k=1}^{n-1}\delta_k\to\infty$ as $n\to\infty$. Moreover, for any $M\in\N$,  since $S_M<\infty$ and $\lim_{n\to\infty}u_n=\infty$, there exists $N\ge M$ such that $u_N\ge S_M$, implying $m(N)\ge M$. Since $M$ is arbitrary, we obtain $\lim_{n\to\infty}m(n)=\infty$. Thus,~\eqref{eq:choice_of_n+1} gives
\[
\delta_{m(n)}\le u_{n+1}-u_n\le 2\delta_{m(n)}\to 0
\quad\&\quad
0\le h(u_{n+1})\le \upsilon_{m(n)}\to0\quad\text{as $n\to\infty$.}
\]
Setting $t_n:=e^{u_n}$, we get $t_{n+1}/t_n=e^{u_{n+1}-u_n}\to1$ and 
$|g(t_n)|=h(u_n)\to 0$ as $n\to\infty$.
\end{proof}

For the remainder of this section, we denote $\R_+\coloneqq [0,\infty)$ and let $(\Sigma_Y,\gamma_Y,\nu_Y)$ be the generating triplet of the L\'evy process $Y$ with respect to the cutoff function $x \mapsto \1_{\{|x|<1\}}$ (see~\cite[Def.~8.2]{SatoBookLevy}). 
Given a random variable~$\xi$, denote by $\varphi_{\xi}(u)\coloneqq\E[e^{iu\xi}]$, $u\in\R$, its characteristic function. Under the assumptions of Theorem~\ref{thm:int-3_one-dim_version}, define $\xi_t\coloneqq B(t)^{-1}Y_t$, $F_t(x)\coloneqq\p(\xi_t\le x)$ and $\Phi(x)=\p(Z\le x)$ for $t\ge 1$ and $x\in\R$. Recall that a function $f$ is said to be regularly varying at $0$ with index $\alpha$, if $f(\lambda x)/f(x) \to \lambda^\alpha$ as $x \da 0$ for any $\lambda>0$.
If $\alpha=0$, $f$ is said to be slowly varying.

\begin{lemma}
\label{lem:reg_vary_mu+finitemoment}
Let the assumptions of Theorem~\ref{thm:int-3_one-dim_version} hold. Define $\mu:\R\to\R$ and $\mu_t:\R\to\R$ via $\mu(x)\coloneqq -\log\varphi_{Y_1}(x)$ and $\mu_t(x)\coloneqq -\log\varphi_{\xi_t}(x)=t\mu(B(t)^{-1}x)$. Then, the following statements hold.
\begin{itemize}[leftmargin=2.5em, nosep]
    \item[{\nf(a)}] The function $\mu$ is regularly varying at $0$ with index $2$. Hence the function $\ell$, given by $\ell(x)\coloneqq2\mu(x)x^{-2}=\Sigma_Y+4x^{-2}\int_{(0,\infty)}(1-\cos(x v))\nu_Y(\D v)$ for $x\in\R$, is slowly varying at $0$.
    \item[{\nf(b)}] The function $B$ is an asymptotic inverse of $x\mapsto 1/(2\mu(1/x))$ as $x\to\infty$, i.e. $1/(2\mu(1/B(t)))\sim t$ as $t\to\infty$, and is thus regularly varying at $\infty$ with index $1/2$.
    \item[{\nf(c)}] We have $\xi_t \coloneqq B(t)^{-1}Y_t\cid Z$ as $t\to\infty$, that is, for any $x\in\R$, we have
    \begin{equation}
    \label{eq:cfs-converge}
    \varphi_{\xi_t}(x)
    =\exp(-\mu_t(x))
    =\exp\big(-t\mu(B(t)^{-1}x)\big)
    \to e^{-x^2/2},
    \quad \text{as }t\to\infty.
    \end{equation}
    \item[{\nf(d)}] For any $p\in[0,2)$, the process $Y$ has a finite $p$-moment and $\sup_{t\ge 1}\E[|\xi_t|^p]<\infty$.
    \item[{\nf(e)}] For any $p\ge 1$ we have $\sup_{t\ge 1}\int_\R|F_t(x)-\Phi(x)|^p\D x<\infty$.
\end{itemize}
\end{lemma}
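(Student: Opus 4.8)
The plan is to extract one well-behaved sequence of times from the integrability hypothesis and then use the regularity of $B$ to propagate the information to all times. First I would record the L\'evy--Khintchine exponent of the symmetric process, $\mu(x)=\tfrac12\Sigma_Y x^2+2\int_{(0,\infty)}(1-\cos(xv))\,\nu_Y(\D v)\ge 0$, so that $\varphi_{\xi_t}(x)=\exp(-t\mu(B(t)^{-1}x))\in(0,1]$ never vanishes, $\mu_t$ is well defined, and the identity for $\ell$ in~(a) holds by definition. Applying Lemma~\ref{lem:integrable-sequence-to0} to the measurable non-negative map $t\mapsto d_{\mathscr K}(\xi_t,Z)$ gives an increasing sequence $t_n\to\infty$ with $t_{n+1}/t_n\to1$ and $d_{\mathscr K}(\xi_{t_n},Z)\to0$. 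Since $\Phi$ is continuous this forces $\xi_{t_n}\cid Z$, hence $\varphi_{\xi_{t_n}}\to\varphi_Z$, and, the limit being continuous, the convergence is uniform on compacts; as these characteristic functions are bounded away from $0$ on compacts for large $n$, taking logarithms yields
\[
t_n\,\mu\big(B(t_n)^{-1}x\big)\longrightarrow x^2/2\qquad\text{uniformly for $x$ in compact sets.}
\]

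Next I would upgrade this to the full limit~\eqref{eq:cfs-converge}, i.e.\ part~(c). For $t\ge1$ let $f(t)$ be the largest $t_n$ with $t_n\le t$; then $f$ is non-decreasing with $f(t)/t\to1$, so $\rho_t:=B(t)^{-1}B(f(t))\to1$. Writing $t\,\mu(B(t)^{-1}x)=\tfrac{t}{f(t)}\cdot f(t)\,\mu\big(B(f(t))^{-1}(\rho_t x)\big)$ and noting $\rho_t x\to x$ stays in a compact set, the uniform-on-compacts convergence along $(t_n)$ gives $t\,\mu(B(t)^{-1}x)\to x^2/2$ for every $x$, hence $\varphi_{\xi_t}(x)\to e^{-x^2/2}$, which is~(c); and by the same P\'olya argument applied now to $(\xi_t)_{t\to\infty}$ the convergence $t\,\mu(B(t)^{-1}x)\to x^2/2$ is in fact uniform for $x$ in compacts.

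For~(a) I would show that $\{B(t)^{-1}:t\ge T\}$ is multiplicatively dense at $0$: taking $f(t)=t-1$ in the hypothesis gives $B(t-1)/B(t)\to1$, so the sampled sequence $\beta_k:=B(T+k)^{-1}$ satisfies $\beta_k\to0$ and $\beta_{k+1}/\beta_k\to1$; since $\log\beta_k\to-\infty$ with increments tending to $0$, every sufficiently small $y>0$ admits some $t$ with $|\log(B(t)^{-1}/y)|$ arbitrarily small. Hence, given $y\to0$, choose $t(y)\to\infty$ with $c(y):=y\,B(t(y))\to1$; then for fixed $\lambda>0$
\[
\frac{\mu(\lambda y)}{\mu(y)}
=\frac{t(y)\,\mu\big(B(t(y))^{-1}(c(y)\lambda)\big)}{t(y)\,\mu\big(B(t(y))^{-1}c(y)\big)}
\longrightarrow\frac{\lambda^2/2}{1/2}=\lambda^2,
\]
using the uniform-on-compacts convergence of the previous step (the arguments $c(y)\lambda\to\lambda$ and $c(y)\to1$ lie in a compact set), which also shows $\mu(y)>0$ for all small $y$. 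Thus $\mu$ is regularly varying at $0$ with index $2$ and $\ell=2\mu(\cdot)/(\cdot)^2$ is slowly varying, i.e.~(a). Part~(b) then follows at once: \eqref{eq:cfs-converge} with $x=1$ gives $2t\,\mu(1/B(t))\to1$, so $B$ is an asymptotic inverse of $x\mapsto1/(2\mu(1/x))$, which by~(a) is regularly varying at $\infty$ with index $2$; by the standard theory of asymptotic inverses of regularly varying functions (and since $B(t)\to\infty$), $B$ is regularly varying at $\infty$ with index $1/2$.

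Finally, $\xi_t\cid Z$ (with positive scaling after replacing $B$ by $|B|$, legitimate since $\xi_t$ is symmetric) shows $Y$ lies in the domain of attraction of $Z$, so Proposition~\ref{prop:DoA} gives $\E[|Y_1|^p]<\infty$, equivalently $\int_{|v|>1}|v|^p\,\nu_Y(\D v)<\infty$, for every $p\in[0,2)$, whence $\E[|Y_t|^p]<\infty$ for all $t$. For the uniform bound in~(d) I would use $\E[|\xi_t|^p]=C_p\int_{\R}\frac{1-\varphi_{\xi_t}(u)}{|u|^{1+p}}\D u$ (valid for symmetric $\xi_t$ and $p\in(0,2)$): on $\{|u|\ge1\}$ the integrand is at most $2|u|^{-1-p}$; on $\{|u|<1\}$, bound $1-\varphi_{\xi_t}(u)\le t\,\mu(B(t)^{-1}u)=\tfrac12u^2\,\big[tB(t)^{-2}\ell(B(t)^{-1})\big]\,\ell(B(t)^{-1}u)/\ell(B(t)^{-1})$ and combine $tB(t)^{-2}\ell(B(t)^{-1})\to1$ with Potter's bounds for the slowly varying $\ell$ to get $1-\varphi_{\xi_t}(u)\le C|u|^{2-\delta}$ uniformly in large $t$, for any fixed $\delta\in(0,2-p)$, making the $\{|u|<1\}$ integral finite uniformly; the bounded-time range $t\in[1,T_0]$ is handled directly from $\E[|Y_t|^p]<\infty$. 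This gives $\sup_{t\ge1}\E[|\xi_t|^p]<\infty$, i.e.~(d). For~(e), since $|F_t-\Phi|\le1$ and $p\ge1$ we have $|F_t-\Phi|^p\le|F_t-\Phi|$, so it suffices to bound $\int_{\R}|F_t-\Phi|\,\D x$, which is the Wasserstein-$1$ distance $\mathcal W_1(\xi_t,Z)\le\E[|\xi_t|]+\E[|Z|]$, finite uniformly in $t$ by~(d). The main obstacle is the continuous-time upgrade carried out above and re-used for~(a): integrability against $t^{-1}\D t$ only yields a subsequence (unlike summability in the discrete-time results of~\cite{MR334308,MR243636}), so turning subsequential convergence into a genuine limit — and, for~(a), into full regular variation via the multiplicative density of $\{B(t)^{-1}\}$ — is exactly where the hypotheses $B(t)\to\infty$ and $B(t)^{-1}B(f(t))\to1$ (for non-decreasing $f$ with $f(t)/t\to1$) must be combined with the P\'olya-type uniform convergence of characteristic functions.
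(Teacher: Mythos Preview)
Your proof is correct and follows a genuinely different route from the paper's. The paper establishes (a) first along the subsequence $(t_n)$ by invoking the characterisation theorems for regular variation~\cite[Thms~1.4.1~\&~1.9.2]{BinghamBook}, then derives (b) by constructing the asymptotic inverse $g$ of $x\mapsto 1/(2\mu(1/x))$, showing $B(t_n)\sim g(t_n)$, and extending to all $t$ via $n(x)=\inf\{n:t_n\ge x\}$; only then does (c) fall out. You instead upgrade the subsequential limit to the full limit (c) \emph{first}, exploiting the hypothesis $B(t)^{-1}B(f(t))\to 1$ with $f(t)=\max\{t_n:t_n\le t\}$ together with the uniform-on-compacts convergence of characteristic functions; this is a clean manoeuvre that makes (b) a one-line consequence of (c) with $x=1$. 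Your proof of (a) via the multiplicative density of $\{B(T+k)^{-1}\}_k$ at $0$ (using $f(t)=t-1$) is self-contained and avoids citing the characterisation theorems. For (d), the paper defers to an external lemma~\cite[Lem.~3.1]{bang2021asymptotic}, whereas your explicit bound through $\E[|\xi_t|^p]=C_p\int(1-\varphi_{\xi_t}(u))|u|^{-1-p}\D u$ and Potter's bounds on $\ell$ is more transparent. The trade-off is that the paper's route is shorter if one is willing to cite regular-variation machinery and the external moment lemma, while yours is essentially self-contained. One small caveat: your treatment of the range $t\in[1,T_0]$ in (d) tacitly needs $\inf_{t\in[1,T_0]}|B(t)|>0$, which is not guaranteed for merely measurable $B$; the downstream uses of (d) and (e), however, only require the supremum over large $t$ (recall $\Loc(+\infty)$ gives finiteness only from some $M$ onwards), so this does not affect the logical flow.
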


\begin{proof}
Since $Y$ is a symmetric L\'evy process, it follows from~\cite[Ex.~18.1]{SatoBookLevy} that $\gamma_Y=0$ and $\nu_Y$ is a symmetric measure on $\R\setminus\{0\}$. Thus the characteristic function $\varphi_{Y_1}$ is real-valued (i.e. $\Im\varphi_{Y_1}=0$). 

\noindent\underline{Part (a)} Note that $d_{\mathscr{K}}(\xi_t,Z)=\sup_{q\in\Q}|\p(B(t)^{-1}Y_t\le q)-\p(Z\le q)|$. The measurability of $B$ and the stochastic continuity of $Y$ imply that $t\mapsto |\p(B(t)^{-1}Y_t\le q)-\p(Z\le q)|$ is measurable for each $q$, making the function  $t\mapsto d_{\mathscr{K}}(\xi_t,Z)$  measurable. Let $(t_n)_{n\in\N}$ be the sequence given in Lemma~\ref{lem:integrable-sequence-to0} for the measurable function $t\mapsto d_{\mathscr{K}}(\xi_t,Z)$, so that $t_n\to\infty$, $d_{\mathscr{K}}(\xi_{t_n},Z)\to0$ and 
$t_{n+1}/t_n\to 1$ as $n\to\infty$.
Hence $\xi_{t_n}=B(t_n)^{-1}Y_{t_n}\cid Z$ as $n\to\infty$, implying the convergence of the characteristic functions, 
\begin{equation*}
\varphi_{\xi_{t_n}}(x)
=\exp\big(-t_n\mu(B(t_n)^{-1}x) \big) 
\to e^{-x^2/2}, \quad  \text{as $n \to \infty$ for all $x\in\R$,}
\end{equation*}
where $\mu(x)=\tfrac{1}{2}\Sigma_Y x^2+2\int_{(0,\infty)}(1-\cos(xv))\nu_Y(\D v)$ by~\cite[Eq.~(38.1)]{SatoBookLevy}. Hence, it follows that 
\begin{align}\label{eq:mu_convergence}
    t_n\mu( B(t_n)^{-1}x) \to x^2/2, \quad \text{ as }n \to \infty \text{ for all $x \in \R$.}
\end{align}
Recall that $B(t)$ is non-zero for all sufficiently large $t$, and the limits $ B(t) \to \infty$ and $B(t)^{-1} B(f(t))\to 1$ hold as $t \to \infty$ for any non-decreasing function $f$ with $f(t)/t\to 1$. Hence, the limits $t_n\to\infty$ and $t_{n+1}/t_n\to 1$ as $n\to\infty$ imply that $B(t_n)^{-1}\to 0$ and $B(t_n)^{-1} B(t_{n+1})\to1$ as $n\to\infty$. Thus, 
\[
\frac{\mu(B(t_n)^{-1}x)}{\mu(B(t_n)^{-1})}
=\frac{t_n\mu(B(t_n)^{-1}x)}{t_n\mu(B(t_n)^{-1})}\to x^2,
\quad\text{as $n\to\infty$ for all $x\in\R$.}
\]
Hence,~\eqref{eq:mu_convergence} and~\cite[Thms~1.4.1~\&~1.9.2]{BinghamBook} imply that $\mu$ is regularly varying at $0$ with index $2$ and $\ell$ is slowly varying at $0$.

\noindent\underline{Part (b)} The function $x\mapsto 1/(2\mu(1/x))$ is regularly varying at $\infty$ with index $2$, so~\cite[Thm~1.5.12]{BinghamBook} implies the existence of an asymptotically unique increasing function $g$ that is regularly varying at $\infty$ with index $1/2$ and such that
\[
1/\big(2\mu(1/g(x))\big)
\sim g\big(1/(2\mu(1/x))\big)
\sim x,\quad \text{as }x\to\infty.
\]
By~\eqref{eq:mu_convergence}, it follows that
\[
t_n
\sim\frac{1}{2\mu(1/B(t_n))},
\quad\text{and hence}\quad
g(t_n)
\sim g\bigg(\frac{1}{2\mu(1/B(t_n))}\bigg)
\sim B(t_n),
\quad\text{as }n\to\infty.
\]
This gives the claim along the sequence $(t_n)_{n\in\N}$. To complete the proof of part (b), define $n(x)\coloneqq \inf\{n\in\N:t_n\ge x\}$ for $x\ge 0$ and note that $1\le \liminf_{x\to\infty}t_{n(x)}/x\le\limsup_{x\to\infty}t_{n(x)}/t_{n(x)-1}=1$ and hence $t_n(x)\sim x$ as $x\to\infty$. The claim now follows from the properties of $B$ and $g$:
\[
B(x)
\sim B(t_{n(x)})
\sim g(t_{n(x)})
\sim g(x), \qquad \text{ as }x \to \infty.
\]

\noindent\underline{Part (c)} The result follows from (b). Indeed, since $B$ is an asymptotic inverse of $x\mapsto1/(2\mu(1/x))$, we have $t\mu(B(t)^{-1}\eta)\to \eta^2/2$ as $t\to\infty$ for any $\eta\in\R$, implying the limit in~\eqref{eq:cfs-converge}.

\noindent\underline{Part (d)} The claim follows from~\cite[Lem.~3.1]{bang2021asymptotic} and parts (b) and (c).

\noindent\underline{Part (e)} Since $|F_t(x)-\Phi(x)|\le 1$ for all $t\ge 1$, $x\in\R$, it suffices to consider the case $p=1$. Next note that the symmetry $1-F_t(x)=F_t(-x)$ and $1-\Phi(x)=\Phi(-x)$  for $x>0$ the triangle inequality imply 
\begin{align*}
\int_\R|F_t(x)-\Phi(x)|\D x
&= 2\int_{\R_+}|(1-F_t(x))-(1-\Phi(x))|\D x\\
& \le 
\int_{\R_+}(\p(|B(t)^{-1}Y_t|>x)+\p(|Z|>x))\D x=
\E[|B(t)^{-1}Y_t|] + \E[|Z|].
\end{align*}
These moments are uniformly bounded for $t\ge 1$ by part (d), so the claim follows.
\end{proof}

\begin{lemma}
\label{lem:maintechnicalbit_proof}
Let $\ell$ as in Lemma~\ref{lem:reg_vary_mu+finitemoment}(a). Then, under the assumptions of Theorem~\ref{thm:int-3_one-dim_version}, we have
\begin{equation}\label{eq:int_ell_y^2}
    \int_{(1,\infty)}
    \frac{y^2}{\ell(1/y)}\nu_Y(\D y)<\infty.
\end{equation}
\end{lemma}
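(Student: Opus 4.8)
The plan is to tie the integral in~\eqref{eq:int_ell_y^2} to the hypothesis $t\mapsto t^{-1}d_{\mathscr K}(\xi_t,Z)\in\Loc(+\infty)$, where $\xi_t=B(t)^{-1}Y_t$ as in Lemma~\ref{lem:reg_vary_mu+finitemoment}. The first step is a reduction by regular variation. Writing $\ov{\nu}_Y(r)\coloneqq\nu_Y(\{v\in\R:|v|>r\})$ and using the identity $y^{2}/\ell(1/y)=1/(2\mu(1/y))$, Lemma~\ref{lem:reg_vary_mu+finitemoment}(b) tells us that $y\mapsto y^{2}/\ell(1/y)$ is an asymptotic inverse of $B$; being regularly varying at $\infty$ with index $2$, it is asymptotically equivalent to an increasing function whose inverse is $\sim B$. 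A change of variables (Fubini applied to $\ov{\nu}_Y(cB(t))=\int\1_{\{|v|>cB(t)\}}\,\nu_Y(\D v)$), together with Potter's bounds and the slow variation of $\ell$, then shows that for any fixed $c\ge 1$,
\[
\int_{(1,\infty)}\frac{y^{2}}{\ell(1/y)}\,\nu_Y(\D y)<\infty
\qquad\Longleftrightarrow\qquad
\int_{M}^{\infty}\ov{\nu}_Y(cB(t))\,\D t<\infty ,
\]
the constant $c$ only rescaling the left-hand integral by $c^{-2}(1+\oh(1))$. Hence it suffices to bound $\int_{M}^{\infty}t^{-1}\big(t\,\ov{\nu}_Y(cB(t))\big)\,\D t$ for a convenient $c$, i.e. to control the big-jump mass $Q_t\coloneqq t\,\ov{\nu}_Y(cB(t))$, which tends to $0$ as $t\to\infty$ (this being part of $\xi_t\cid Z$, established in Lemma~\ref{lem:reg_vary_mu+finitemoment}(c)).

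The heart of the argument is to extract $Q_t$ from $d_{\mathscr K}(\xi_t,Z)$, for which I see two complementary routes. The first is a one-big-jump estimate: decomposing $Y$ into the independent parts carrying the jumps of magnitude $\le cB(t)$ and $>cB(t)$ and conditioning on exactly one jump of the latter type on $[0,t]$ (an event of probability $\asymp Q_t$, with $\ge 2$ such jumps having probability $\oh(Q_t)$), that jump is $\ge c$ after rescaling, symmetric, and independent of the small part, so for $c$ and a threshold $x_0$ suitably chosen it displaces $\xi_t$ past $x_0$ with positive conditional probability, producing an excess of $F_t$ over $\Phi$ at $x_0$ of order $Q_t$. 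The second is a Fourier-analytic estimate: since $F_t-\Phi\in L^1\cap L^\infty$ by Lemma~\ref{lem:reg_vary_mu+finitemoment}(e), integration by parts gives $\widehat{F_t-\Phi}(u)=(e^{-u^2/2}-e^{-\mu_t(u)})/(iu)$, whence Plancherel's identity and $\|F_t-\Phi\|_2^2\le\|F_t-\Phi\|_\infty\|F_t-\Phi\|_1$ yield $\int_\R u^{-2}|e^{-u^2/2}-e^{-\mu_t(u)}|^{2}\,\D u\le C\,d_{\mathscr K}(\xi_t,Z)$. Writing $\mu_t(u)-\tfrac{u^2}{2}=\tfrac{u^2}{2}h_t-\mathrm{curv}_t(u)+G_t(u)$, with $h_t\coloneqq tB(t)^{-2}H_Y(B(t))-1\to0$ for $H_Y(a)\coloneqq\Sigma_Y+\int_{|v|\le a}v^2\nu_Y(\D v)$, with $0\le\mathrm{curv}_t(u)\lesssim u^4\rho_t$ and $\rho_t\to0$, and with $G_t(u)=\int_{|w|>1}(1-\cos(uw))\,\lambda_t(\D w)\ge0$ for the rescaled Lévy measure $\lambda_t$ of $\xi_t$, one isolates $\int_{|u|\le\delta}u^{-2}G_t(u)^{2}\,\D u$, bounds it below by a constant multiple of $\delta^{-1}Q_t^{2}$ (Cauchy--Schwarz together with $\int_0^\delta G_t(u)\,\D u\ge\delta(1-1/\pi)\,\lambda_t(\{|w|>\pi/\delta\})$, so that $c=\pi/\delta$), and above by $C'd_{\mathscr K}(\xi_t,Z)+\delta^3 h_t^{2}+\delta^7\rho_t^{2}$.

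The main obstacle, which I expect to dominate the work and is consistent with this being the deepest step of the paper, is that the error quantities $h_t$, $\rho_t$ and the small-part-to-Gaussian discrepancy all tend to $0$ \emph{without a known rate} -- this is the genuinely continuous-time difficulty flagged before Theorem~\ref{thm:int-3}: an $\Loc(+\infty)$ function need not vanish and $B$ is only regularly varying, so there is no discrete-time monotonicity to exploit. The first route alone gives $c_0Q_t\le 2d_{\mathscr K}(\xi_t,Z)+Q_t+(\text{small-part discrepancy})$, in which the linear $Q_t$ cannot be absorbed; the second route alone gives only a quadratic bound $c_0Q_t^{2}\lesssim d_{\mathscr K}(\xi_t,Z)+h_t^{2}+\rho_t^{2}$, which does not yield $\int t^{-1}Q_t\,\D t<\infty$ even after integration. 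The resolution will be to (a) control $h_t$ and $\rho_t$ themselves by truncated-second-moment and Parseval estimates, using $|\E[\xi_t^{2}\wedge a^{2}]-\E[Z^{2}\wedge a^{2}]|\le 2a^{2}d_{\mathscr K}(\xi_t,Z)$ and the uniform moment bounds of Lemma~\ref{lem:reg_vary_mu+finitemoment}(d), thereby obtaining a coupled system of integral inequalities; and (b) close this system by first invoking Lemma~\ref{lem:integrable-sequence-to0} to pass to a sequence $t_n\to\infty$ with $t_{n+1}/t_n\to 1$ and $d_{\mathscr K}(\xi_{t_n},Z)\to0$, establishing the required finiteness along $(t_n)$ by the regular- and slow-variation machinery of Lemma~\ref{lem:reg_vary_mu+finitemoment}, and then bootstrapping back to $\int_M^\infty t^{-1}Q_t\,\D t<\infty$; by the first-step reduction this is~\eqref{eq:int_ell_y^2}.
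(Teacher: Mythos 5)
Your endpoints are sound: the reduction of \eqref{eq:int_ell_y^2} to $\int_M^\infty \ov\nu_Y(cB(t))\,\D t<\infty$ via asymptotic inverses and Fubini mirrors the paper's final step (which works with the truncated fourth moment $B(t)^{-4}\int_{(0,\pi B(t)]}y^4\nu_Y(\D y)$ instead of the tail, but the conversion is the same), and the Plancherel inequality $\int_\R u^{-2}\big(\varphi_{\xi_t}(u)-e^{-u^2/2}\big)^2\D u\lesssim d_{\mathscr K}(\xi_t,Z)$ is indeed one of the paper's two inputs. The gap is in the middle, and you have essentially diagnosed it yourself: the one-big-jump route leaves an unabsorbable $O(Q_t)$ on the right-hand side, and the Plancherel route yields only $Q_t^2\lesssim d_{\mathscr K}(\xi_t,Z)+h_t^2+\rho_t^2$, hence at best $Q_t\lesssim\sqrt{d_{\mathscr K}(\xi_t,Z)}$, which says nothing about $\int t^{-1}Q_t\,\D t$ (take $d_{\mathscr K}\asymp(\log t)^{-2}$). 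The proposed repair is not a proof: Lemma~\ref{lem:integrable-sequence-to0} produces a sequence along which $d_{\mathscr K}(\xi_{t_n},Z)\to0$ and can only deliver limit statements (this is exactly how the paper uses it, to prove regular variation of $B$ in Lemma~\ref{lem:reg_vary_mu+finitemoment}); it cannot establish finiteness of an integral over all $t$, and no mechanism is offered for upgrading the quadratic bound to a linear one or for the announced ``bootstrap''.

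The two ideas you are missing, which let the paper avoid ever estimating $h_t$ or $\rho_t$, are these. First, a \emph{linear} use of the hypothesis: pairing $F_t-\Phi$ against $H(x)=(z-x)xe^{x^2/2}\1_{(0,z)}(x)$, whose Fourier transform lies in $L^1(\R)$, gives $\big|\int_0^z(z-x)e^{x^2/2}\big(\varphi_{\xi_t}(x)-e^{-x^2/2}\big)\D x\big|\le\|\wh H\|_{L^1}\,d_{\mathscr K}(\xi_t,Z)$; since the second-order remainder $R_t(x)=e^{x^2/2-\mu_t(x)}-1-(x^2/2-\mu_t(x))$ is genuinely quadratic in $1-2\mu_t(x)/x^2$ and therefore controlled by the Plancherel bound, subtracting it leaves $\int_1^\infty\big|\int_0^z(z-x)\big(x^2-2\mu_t(x)\big)\D x\big|\,t^{-1}\D t<\infty$ for each fixed $z$ --- first-order, averaged-in-$x$ control of $\mu_t(x)-x^2/2$. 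Second, the algebraic cancellation: combining $z=1$ and $z=2$ to form $2\mu_t(x)-\tfrac12\mu_t(2x)$ annihilates the Gaussian target and the Brownian part of $Y$ exactly, and by $4(1-\cos\theta)-(1-\cos2\theta)=2(1-\cos\theta)^2$ the surviving jump contribution equals $2t\int_{(0,\infty)}\big(1-\cos(B(t)^{-1}xy)\big)^2\nu_Y(\D y)\ge0$, so the averaged absolute bound becomes an honest bound on a nonnegative quantity with \emph{no} error terms left to absorb. Restricting to $y\le\pi B(t)$ and using $1-\cos\theta\ge2\theta^2/\pi^2$ then gives $\int_1^\infty B(t)^{-4}\int_{(0,\pi B(t)]}y^4\nu_Y(\D y)\,\D t<\infty$, after which your regular-variation reduction does finish the proof. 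Without this cancellation, the ``no known rate'' obstruction you flag is real and your scheme does not close.
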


\begin{proof}
The proof requires a sequence of estimates given in the following six steps.

\noindent\underline{Step 1.} We first prove that 
\begin{equation}
\label{eq:maintechnicalbit_proof-aux1}
\varphi_{\xi_{t}}(\eta)-e^{-\eta^2/2}
=\int_{\R}(e^{i\eta x}-1)\D (F_t(x)-\Phi(x))
=- i\eta\int_{\R}(F_t(y)-\Phi(y)) e^{i\eta y}\D y.
\end{equation}
Recall that $\xi_t=B(t)^{-1}Y_t$, $F_t(x)=\p(B(t)^{-1}Y_t\le x)$ and $\Phi(x)=\p(Z\le x)$ for $t\ge 1$ and $x\in\R$. Applying Fubini's theorem gives
\begin{multline*}
\int_{\R_+}(e^{i\eta x}-1)\D (F_t(x)-\Phi(x))
=\int_{\R_+}\int_{0}^x i\eta e^{i\eta y}\D y\, \D (F_t(x)-\Phi(x))\\
=\int_{\R_+} i\eta e^{i\eta y}\, [(1-F_t(y))-(1-\Phi(y))]\D y
=- i\eta\int_{\R_+}(F_t(y)-\Phi(y)) e^{i\eta y}\D y.
\end{multline*} 
An analogous identity holds for the integrals over $\R_-$ and, hence, for the integrals over $\R$. Since $\int_\R\D(F_t(x)-\Phi(x))=1-1=0$, we obtain~\eqref{eq:maintechnicalbit_proof-aux1}.

\noindent \underline{Step 2.} Our next goal is to prove that, for any $z\in(0,\infty)$, we have 
\begin{equation}\label{eq:finite_integral_varphi_2}
\int_1^\infty\bigg|\int_0^z(z-x)\big(
    e^{x^2/2-\mu_t(x)}-1\big)\D x\bigg|\frac{\D t}{t}<\infty,
\end{equation} 
where  $\mu_t(x)=-\log\varphi_{\xi_t}(x)$ given in Lemma~\ref{lem:reg_vary_mu+finitemoment}. Fix $z\in(0,\infty)$, define $H(x)\coloneqq (z-x)x e^{x^2/2}\1_{(0,z)}(x)$, $x\in\R$, and let $\wh H(y)=\int_{\R} e^{ixy}H(x)\D x$, $y\in\R$, denote its Fourier transform.
Since $H(0)=H(z)=0$, integration by parts yields $\wh H(y)=-(1/iy)\int_0^z e^{ixy}H'(x)\D x$. Since $H''$ exists and is continuous on $(0,z)$ and $|H'(0)|, |H'(z)|<\infty$ , a further integration by parts implies $\wh H\in L^1(\R)$.

Equation~\eqref{eq:maintechnicalbit_proof-aux1} then yields
\begin{multline*}
 i\int_0^z (z-x)e^{x^2/2}(\varphi_{\xi_{t}}(x)
    -e^{-x^2/2})\D x\\
=\int_0^z(z-x)x e^{x^2/2}\int_{\R}(F_t(y)-\Phi(y))
    e^{ixy}\D y\,\D x
=\int_{\R}\int_{\R}(F_t(y)-\Phi(y))
    e^{ixy}\D y\,H(x)\D x\\
=\int_{\R}\int_{\R}
    e^{ixy}H(x)\D x\,(F_t(y)-\Phi(y))\D y
=\int_{\R}\wh H(y)(F_t(y)-\Phi(y))\D y.
\end{multline*}
Since $t\mapsto t^{-1}d_{\mathscr{K}}(\xi_t,Z)\in\Loc(+\infty)$ and $\wh H\in L^1(\R)$, we deduce that
\begin{equation*}
\int_1^\infty\bigg|\int_0^z (z-x)e^{x^2/2}
    (\varphi_{\xi_t}(x)-e^{-x^2/2})
        \D x\bigg|\frac{\D t}{t}
\le
\int_{\R}|\wh H(y)|\D y
    \int_1^\infty d_{\mathscr{K}}(\xi_t,Z)\frac{\D t}{t}<\infty,
\end{equation*} 
implying~\eqref{eq:finite_integral_varphi_2}.

\noindent \underline{Step 3.} Our next goal is to obtain an estimate for $\exp(x^2/2-\mu_t(x))$ that is uniform in $x\in[0,z]$ and valid for sufficiently large $t$. Note that, for any $r\ge 0$ and $u\in[-1,1]$, we have 
\[
|e^{ru}-1-ur|
= \bigg|u^2r\sum_{k=2}^\infty\frac{u^{k-2}r^{k-1}}{k!}\bigg|
\le u^2r\sum_{k=1}^\infty \frac{r^{k-1}}{(k-1)!}
=u^2re^{r}.
\]
By Lemma~\ref{lem:reg_vary_mu+finitemoment}(c), $\xi_t\cid Z$ as $t\to\infty$, so the characteristic functions converge uniformly on compact sets. Thus, for all $x\in[0,z]$, and all sufficiently large $t$, we have $|1-2\mu_t(x)/x^2|<1$. Together with the previous display (with $r=x^2/2$ and $u=1-2\mu_t(x)/x^2$), this yields, uniformly in $x\in[0,z]$ and for all sufficiently large $t$,
\begin{equation}\label{eq:R:t(eta)_bound}
\big|R_t(x)\big|
\le\frac{x^2}{2}e^{x^2/2}
    \big(1-2\mu_t(x)/x^2\big)^2,
    \enskip\text{where}\enskip
R_t(x)\coloneqq e^{x^2/2-\mu_t(x)}-1
-(x^2/2-\mu_t(x)).
\end{equation} 

\noindent \underline{Step 4.} Our next objective is to show that
\begin{equation}
\label{eq:integrability_varphi_Y_1}
\int_1^\infty \bigg|\int_0^1
    (1-x)\big(2\mu_t(x)-\tfrac{1}{2}\mu_t(2x)\big)
        \D x \bigg| \frac{\D t}{t}<\infty.
\end{equation}
As $x \mapsto F_t(x)-\Phi(x)\in L^1(\R)\cap L^2(\R)$ by Lemma~\ref{lem:reg_vary_mu+finitemoment}(e), Parseval--Plancherel's identity and~\eqref{eq:maintechnicalbit_proof-aux1} give
\begin{equation}\label{eq:second_moment_cdfs}
\int_\R (F_t(x)-\Phi(x))^2 \D x
= \frac{1}{2\pi} \int_\R 
    y^{-2}(\varphi_{\xi_{t}}(y)-e^{-y^2/2})^2 
    \D y<\infty.
\end{equation} 
Since $\sup_{t\ge 1}\int_\R |F_t(x)-\Phi(x)|\D x<\infty$ by Lemma~\ref{lem:reg_vary_mu+finitemoment}(e), the assumption $t\mapsto t^{-1}d_{\mathscr{K}}(\xi_t,Z)\in\Loc(+\infty)$ implies that
\begin{align*}
\int_1^\infty \int_\R(F_t(x)-\Phi(x))^2\D x\frac{\D t}{t}
\le \bigg(\sup_{s\ge 1}\int_\R
    |F_s(x)-\Phi(x)|\D x\bigg)
    \int_1^\infty d_{\mathscr{K}}(\xi_t,Z)
        \frac{\D t}{t}<\infty.
\end{align*} 
Thus,~\eqref{eq:second_moment_cdfs} implies the finiteness of the following integral for every $z\in(0,\infty)$: 
\begin{equation}\label{eq:second_moment_charac_funcs}
\int_1^\infty\int_0^z 
    x^{-2}\big(\varphi_{\xi_t}(x)-e^{-x^2/2}\big)^2
        \D x\,\frac{\D t}{t} 
\le \int_1^\infty \int_\R 
    x^{-2}\big(\varphi_{\xi_{t}}(x)-e^{-x^2/2}\big)^2
        \D x\,\frac{\D t}{t}<\infty.
\end{equation}
Next, note that~\eqref{eq:R:t(eta)_bound} and the uniform convergence on compact intervals $\mu_t(x)\to x^2/2$ as $t\to\infty$ imply
\begin{equation*}
\big|\varphi_{\xi_{t}}(x)-e^{-x^2/2}\big|
=\frac{x^2}{2}e^{-x^2/2}
    \bigg|1-\frac{2\mu_t(x)}{x^2}\bigg|(1+\rho(t,x)), 
\quad\text{where}\quad
\lim_{t\to\infty}\sup_{x\in[0,z]}|\rho(t,x)|=0.
\end{equation*} 
By~\eqref{eq:second_moment_charac_funcs}, the previous display yields 
\begin{equation*}
\int_1^\infty \int_0^z 
    x^2 e^{-x^2}\bigg(1-\frac{2\mu_t(x)}{x^2}\bigg)^2 
        \D x\, \frac{\D t}{t}< \infty.
\end{equation*}

Since $x\mapsto e^{-x^2}$ and $x\mapsto e^{x^2/2}$ are positive and bounded on $[0,z]$ and $0\le z-x\le z$, the previous display and~\eqref{eq:R:t(eta)_bound} give 
\begin{align*}
\int_1^\infty\int_0^z (z-x)|R_t(x)|\D x\,\frac{\D t}{t} 
\le\frac{z}{2}\int_1^\infty\int_0^z 
    x^2e^{x^2/2}\bigg(1-\frac{2\mu_t(x)}{x^2}\bigg)^2
        \D x\,\frac{\D t}{t}<\infty.
\end{align*} 
Recall from the definition of $R_t(x)$ in~\eqref{eq:R:t(eta)_bound} that $e^{x^2/2-\mu_t(x)}-1-R_t(x)=x^2(1-2x^{-2}\mu_t(x))/2$. Since the integral in the display above and the one in~\eqref{eq:finite_integral_varphi_2} are finite, their difference must be finite. Thus, the change of variables $x=yz$ gives 
\begin{equation*}
\int_1^\infty\bigg|\int_0^z
    (z-x)x^2\bigg(1-\frac{2\mu_t(x)}{x^2}\bigg) 
        \D x\bigg|\frac{\D t}{t}
=z^4\int_1^\infty\bigg|\int_0^1
    (1-y)y^2\bigg(1-\frac{2\mu_t(yz)}{y^2z^2}\bigg) 
        \D y\bigg|\frac{\D t}{t}
<\infty.
\end{equation*} 
The triangle inequality and the previous display (with $z=1$ and $z=2$) imply~\eqref{eq:integrability_varphi_Y_1}:
\begin{multline*}
\int_1^\infty \bigg|\int_0^1
    (1-x)\big(2\mu_t(x)-\tfrac{1}{2}\mu_t(2x)\big)
        \D x \bigg| \frac{\D t}{t} \\
\le\int_1^\infty \bigg|\int_0^1
    (1-x)x^2\bigg(1-\frac{2\mu_t(x)}{x^2}\bigg)
        \D x \bigg| \frac{\D t}{t}
+\int_1^\infty \bigg|\int_0^1
    (1-x)x^2\bigg(1-\frac{\mu_t(2x)}{2x^2}\bigg)
        \D x \bigg| \frac{\D t}{t}< \infty.
\end{multline*}

\noindent \underline{Step 5.} 
In this step, we will show that
\begin{equation}
\label{eq:integrability_B_t^4}
\int_1^\infty \int_{(0,\pi B(t)]} y^4 
    \nu_Y(\D y)\,B(t)^{-4}\D t<\infty.
\end{equation}
Recall from the definition of $\mu_t$ (see Lemma~\ref{lem:reg_vary_mu+finitemoment}) that
\[
2\mu_t(x)-\tfrac{1}{2}\mu_t(2x)
=t\int_{(0,\infty)}\big[4\big(1-\cos\big( B(t)^{-1}xy\big)\big)-\big(1-\cos\big(2 B(t)^{-1}xy\big)\big)\big] \nu_Y(\D y).
\]
Standard trigonometric identities give $4(1-\cos x)-(1-\cos(2x)) = 8\sin^4(x/2) = 2(1-\cos{x})^2$ for all $x\in\R$. Thus,~\eqref{eq:integrability_varphi_Y_1} yields 
\begin{equation}\label{eq:integrability_(1-cos)^2}
\begin{multlined}
\int_1^\infty\int_0^1 (1-x)\int_{(0,\pi B(t)]} 
    \left[1-\cos\big(B(t)^{-1}xy\big)\right]^2
        \nu_Y(\D y)\,\D x\,\D t\\
\quad\le\int_1^\infty \int_0^1 (1-x)\int_{(0,\infty)} 
    \left[1-\cos\big( B(t)^{-1}xy\big)\right]^2 
        \nu_Y(\D y)\,\D x\,\D t<\infty.
\end{multlined}
\end{equation}
Since $2\pi^{-2}\theta^{2} \le 1-\cos(\theta)$, $\theta\in[0,\pi]$, then $2\pi^{-2}(B(t)^{-1}yx)^{2}\le 1-\cos(B(t)^{-1}xy)$ for all $y\in(0,\pi B(t)]$ and $x\in[0,1]$, which, together with~\eqref{eq:integrability_(1-cos)^2}, yields~\eqref{eq:integrability_B_t^4}:
\begin{multline*}
\frac{1}{30}\int_1^\infty \int_{(0,\pi B(t)]} y^4 
    \nu_Y(\D y)\,B(t)^{-4}\D t
=\int_1^\infty \int_0^1 (1-x)x^4\int_{(0,\pi B(t)]} y^4 
    \nu_Y(\D y)\,\D x\,B(t)^{-4}\D t\\
\le\frac{\pi^4}{4}\int_1^\infty \int_0^1 (1-x)\int_{(0,\pi B(t)]} \left[1-\cos\big( B(t)^{-1}xy\big)\right]^2 
    \nu_Y(\D y)\,\D x\,\D t<\infty.
\end{multline*}

\noindent \underline{Step 6.} To complete the proof, we show that~\eqref{eq:integrability_B_t^4} implies~\eqref{eq:int_ell_y^2}. Since $\lim_{t\to\infty}B(t)=\infty$, we assume without loss of generality that $B(t)\ge 1$ for all $t\ge1$. Since $B$ is regularly varying at $\infty$ with index $1/2$ by Lemma~\ref{lem:reg_vary_mu+finitemoment}(b), \cite[Thm~1.5.3]{BinghamBook} implies the existence of a non-decreasing function $\wt B$ with $\wt B(t)\le B(t)$ for all $t\ge 1$ and $B(t)\sim\wt B(t)$ as $t\to\infty$. Then~\eqref{eq:integrability_B_t^4} implies that 
\[
\int_1^\infty\int_{[\wt B(1),\wt B(t)]}y^4\wt B(t)^{-4}
    \nu_Y(\D y)\,\D t
\le \int_1^\infty\int_{(0,B(t)]}y^4\wt B(t)^{-4}
    \nu_Y(\D y)\,\D t<\infty.
\]
Let $\wt B^\leftarrow(y)\coloneqq\inf\{t>0:\wt B(t)\ge y\}$, $y>0$, denote the generalised inverse of $\wt B$. By Fubini's theorem, 
\begin{equation}
\label{eq:integrability_B_t^4-2}
\int_{[\wt B(1),\infty)}
    \int_{(\wt B^\leftarrow(y),\infty)} 
    y^4 \wt B(t)^{-4}\D t\,\nu_Y(\D y)
\le\int_1^\infty\int_{[\wt B(1),\wt B(t)]} 
    y^4 \wt B(t)^{-4}\nu_Y(\D y)\,\D t<\infty.
\end{equation}

Since $t\mapsto\wt B(t)^{-4}$ is regularly varying at $\infty$ with index $-2$, then~\cite[Thm~1.5.11~\&~1.5.12]{BinghamBook} implies that, as $y\to\infty$, we have
\[
\int_{(y,\infty)} \wt B(t)^{-4}\D t
\sim  y\wt B(y)^{-4}
\enskip\text{and hence}\enskip
\int_{(\wt B^\leftarrow(y),\infty)} \wt B(t)^{-4}\D t
\sim\wt B^\leftarrow(y)\wt B(\wt B^\leftarrow(y))^{-4}
\sim\wt B^\leftarrow(y)y^{-4}.
\]
Thus,~\eqref{eq:integrability_B_t^4-2} yields $\int_{[\wt B(1),\infty)}\wt B^\leftarrow(y)\nu_Y(\D y)<\infty$. Since $\wt B^\leftarrow(y)\sim 1/(2\mu(1/y))=y^2/\ell(1/y)$ as $y\to\infty$ by Lemma~\ref{lem:reg_vary_mu+finitemoment}(b), the claim~\eqref{eq:int_ell_y^2} follows, completing the proof.
\end{proof}

The final ingredient for the proof of Theorem~\ref{thm:int-3_one-dim_version} is a generalisation of Kronecker's lemma.


\begin{lemma}
\label{lem:kronecker}
Let $t_0\in\R$ and $\Pi$ be a locally finite measure on $[t_0,\infty)$. Let $f:[t_0,\infty)\to[0,\infty)$ be a measurable non-increasing function with $\lim_{t\to\infty}f(t)=0$ and $\int_{[t_0,\infty)}f(x)\Pi(\D x)<\infty$. Then, we have
\[
\lim_{t\to\infty}f(t)\Pi([t_0,t])=0.
\]
\end{lemma}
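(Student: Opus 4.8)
The plan is to recognise the statement as the continuous-time analogue of Kronecker's lemma and to prove it by splitting the interval $[t_0,t]$ at an intermediate threshold $s$: the ``old'' mass $\Pi([t_0,s))$ will be controlled using local finiteness of $\Pi$ together with $f(t)\to 0$, while the ``recent'' mass $\Pi([s,t])$ will be controlled using the monotonicity of $f$ together with the integrability hypothesis.

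First I would set $\varepsilon(s)\coloneqq\int_{[s,\infty)}f(x)\,\Pi(\D x)$ for $s\ge t_0$ and note, by dominated convergence against the integrable dominating function $f$ (this is the only place the hypothesis $\int_{[t_0,\infty)}f\,\D\Pi<\infty$ is used), that $\varepsilon(s)\da 0$ as $s\to\infty$. Then, fixing $s\ge t_0$ and taking $t\ge s$, I would write $\Pi([t_0,t])=\Pi([t_0,s))+\Pi([s,t])$ and estimate
\[
f(t)\,\Pi([t_0,t])=f(t)\,\Pi([t_0,s))+\int_{[s,t]}f(t)\,\Pi(\D x)\le \Pi([t_0,s))\,f(t)+\varepsilon(s),
\]
where the bound on the terminal piece uses that $f$ is non-increasing, so $f(t)\le f(x)$ for $x\in[s,t]$, together with $f\ge 0$ to enlarge the domain of integration from $[s,t]$ to $[s,\infty)$.

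Since $[t_0,s)$ is bounded, local finiteness of $\Pi$ gives $\Pi([t_0,s))<\infty$, so letting $t\to\infty$ with $s$ held fixed and using $f(t)\to 0$ yields $\limsup_{t\to\infty}f(t)\,\Pi([t_0,t])\le\varepsilon(s)$; since $f,\Pi\ge 0$ we also have $\liminf_{t\to\infty}f(t)\,\Pi([t_0,t])\ge 0$. Finally, letting $s\to\infty$ and using $\varepsilon(s)\to 0$ forces $\lim_{t\to\infty}f(t)\,\Pi([t_0,t])=0$. The argument is short and presents no serious obstacle; the only delicate point is the order of the two limits — one must send $t\to\infty$ first, with $s$ fixed, and only afterwards send $s\to\infty$. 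I would also remark that monotonicity of $f$ is genuinely needed for the terminal-piece bound: a non-monotone $f$ tending to $0$ along a sparse set of times carrying little $\Pi$-mass could make $f(t)\,\Pi([t_0,t])$ fail to converge, so this hypothesis cannot be dropped.
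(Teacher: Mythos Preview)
Your proof is correct. The paper takes a slicker route: it observes that $g_t(x)\coloneqq \1_{[t_0,t]}(x)\,f(t)$ satisfies $0\le g_t\le f$ (by monotonicity of $f$) and $g_t(x)\to 0$ pointwise (since $f(t)\to 0$), so a single application of dominated convergence to $f(t)\Pi([t_0,t])=\int g_t\,\D\Pi$ gives the result in one line. Your splitting at an intermediate threshold $s$ is really the same idea unpacked by hand---the terminal-piece bound $f(t)\Pi([s,t])\le\int_{[s,t]}f\,\D\Pi$ is exactly the domination $g_t\le f$, and the two-limit structure mimics the proof of DCT itself. The paper's version is more concise; yours makes the role of each hypothesis (in particular local finiteness of $\Pi$) more explicit, and your closing remark on the necessity of monotonicity is a nice addition not present in the paper.
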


\begin{proof}
For every $t\ge t_0$ we have $f\ge \1_{[t_0,t]} f(t)$ and, for every $x\in[t_0,\infty)$, $\lim_{t\to\infty} \1_{[t_0,t]}(x) f(t)=0$.
By dominated convergence we get $f(t)\Pi([t_0,t])=\int_{[t_0,\infty)}\1_{[t_0,t]}(x) f(t)\Pi(\D x)\to0$ as $t\to\infty$.
\end{proof}


\begin{proof}[Proof of Theorem~\ref{thm:int-3_one-dim_version}]
Denote by $(a_t,0,c_t)$ the generating triplet of $\xi_t=B(t)^{-1}Y_{t}$ and note that the generating triplet of $Z$ is $(1,0,0)$ (see~\cite[Example~8.5]{SatoBookLevy}). By~\cite[Prop.~11.10]{SatoBookLevy}, we have that $a_t=t B(t)^{-2}\Sigma_Y$ and $c_t(A)=t\nu_Y(\{x:B(t)^{-1}x \in A\})$ for $A \in \mathcal{B}(\R\setminus\{0\})$. Moreover, from Lemma~\ref{lem:reg_vary_mu+finitemoment}(c), we have $\xi_t=B(t)^{-1}Y_{t}\cid Z$ as $t\to\infty$, and hence, by~\cite[Thm~7.7]{MR4226142}, the following convergence holds:
\begin{equation}\label{eq:a_t^h_to_0}
\wt a_t\coloneqq a_t+\int_{[-1,1]\setminus\{0\}}x^2 c_t(\D x)=t B(t)^{-2}\bigg(\Sigma_Y+ 2\int_{(0,B(t)]}x^2 \nu_Y(\D x)\bigg)\to 1, \quad \text{ as }t \to \infty.
\end{equation}
Recall from Lemma~\ref{lem:reg_vary_mu+finitemoment}(b) that, as $t\to\infty$, we have $t\sim 1/(2\mu(1/B(t)))$ and hence 
\begin{equation}
\label{eq:a_t^h_to_0-2}
tB(t)^{-2}
\sim \frac{1}{2B(t)^2\mu(1/B(t))}
=\frac{1}{\ell(1/B(t))}, 
\qquad \text{as }t \to \infty.
\end{equation}
Define the function $\wt\ell$ via $\wt \ell (\eta)\coloneqq\Sigma_Y+ 2\int_{(0,1/\eta]}x^2 \nu_Y(\D x)$, $\eta>0$, and note that $\wt\ell$ is slowly varying and monotone since $\wt\ell(\eta)\sim \ell(\eta)$ as $\eta \da 0$ by~\eqref{eq:a_t^h_to_0},~\eqref{eq:a_t^h_to_0-2} and Lemma~\ref{lem:reg_vary_mu+finitemoment}(b). 

We conclude the proof by contradiction: assume  $\E[Y_1^2]=\infty$. This implies $\int_{(0,\infty)}x^2 \nu_Y(\D x)=\infty$ and hence 
$\wt\ell(\eta)\to\infty$ as $\eta\da0$.
Since 
$\wt\ell(\eta)\sim \ell(\eta)$ as $\eta \da 0$,
Lemma~\ref{lem:maintechnicalbit_proof} implies $\int_{[1,\infty)}\wt\ell(1/x)^{-1}x^2\nu_Y(\D x)<\infty$. Since $\wt \ell$ is non-increasing, measurable and tends to infinity, Lemma~\ref{lem:kronecker} applied with $t_0=1$, $\Pi(\D x)=x^2 \nu_Y(\D x)$ and $f:t\mapsto1/\wt\ell(1/t)$, yields 
\begin{equation*}
\wt \ell(1/t)^{-1}\int_{[1,t]} x^2 \nu_Y(\D x) \to 0, \qquad \text{as }t \to \infty.
\end{equation*} 
Since $B(t)\to\infty$ as $t\to\infty$, this limit implies that
\begin{align*}
\wt \ell(1/B(t))^{-1} \int_{(0,B(t)]}x^2 \nu_Y(\D x) 
\le\wt \ell(1/B(t))^{-1} \int_{(0,1)}x^2 \nu_Y(\D x)
+\wt \ell(1/B(t))^{-1}\int_{[1,B(t)]}x^2 \nu_Y(\D x)  \to 0,
\end{align*} 
as $t\to\infty$. Since by~\eqref{eq:a_t^h_to_0-2} we have $tB(t)^{-2}\sim\wt\ell(1/B(t))^{-1}$ as $t\to\infty$, we obtain 
\begin{equation*}
t B(t)^{-2}\bigg(\Sigma_Y+2\int_{(0,B(t)]}x^2 \nu_Y(\D x) \bigg)
\sim \wt \ell(1/B(t))^{-1}\bigg(\Sigma_Y+2\int_{(0,B(t)]}x^2 \nu_Y(\D x) \bigg) \to 0, \quad \text{as }t \to \infty.
\end{equation*} 
This contradicts~\eqref{eq:a_t^h_to_0}. Thus we must have $\E[Y_1^2]<\infty$. Finally, the limit $d_\mathscr{K}(B(t)^{-1}Y_t,Z)\to 0$ follows from Lemma~\ref{lem:reg_vary_mu+finitemoment}(c) and Theorem~\ref{thm:appendix_conv_weak}, completing the proof. 
\end{proof}

\begin{lemma}
\label{lem:second-moment-from-symm}
Let $\xi_1$ and $\xi_2$ be independent variables and suppose $\E[(\xi_1-\xi_2)^2]<\infty$. Then $\E[\xi_1^2]<\infty$.
\end{lemma}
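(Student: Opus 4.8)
The plan is to reduce the assertion to a statement about $\xi_1$ tested against a single fixed point, using independence through Fubini--Tonelli. Writing $\mu_i$ for the law of $\xi_i$, independence gives that the pair $(\xi_1,\xi_2)$ has law $\mu_1\otimes\mu_2$. Consider the function $g(y)\coloneqq\E[(\xi_1-y)^2]=\int_\R(x-y)^2\,\mu_1(\D x)$, which is well defined with values in $[0,\infty]$ and measurable in $y$ by Tonelli's theorem (the integrand $(x-y)^2$ is jointly measurable and non-negative). A second application of Tonelli then yields $\E[(\xi_1-\xi_2)^2]=\int_\R g(y)\,\mu_2(\D y)$; no integrability hypothesis is needed for this interchange since the integrand is non-negative, so I expect this step to be entirely routine.

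The only genuinely useful inequality is the elementary bound $(a-b)^2\ge\tfrac12 a^2-b^2$ for all $a,b\in\R$, which follows from $(a-b)^2-\tfrac12 a^2+b^2=\tfrac12(a-2b)^2\ge0$. Applying it pointwise with $a=\xi_1$ and $b=y$ and taking expectations gives $g(y)\ge\tfrac12\E[\xi_1^2]-y^2$ for every $y\in\R$.

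I would then finish by contradiction. Suppose $\E[\xi_1^2]=\infty$. Then the previous display forces $g(y)=\infty$ for every $y\in\R$, whence $\E[(\xi_1-\xi_2)^2]=\int_\R g(y)\,\mu_2(\D y)=\infty$, because $\mu_2$ is a probability measure. This contradicts the hypothesis $\E[(\xi_1-\xi_2)^2]<\infty$, so $\E[\xi_1^2]<\infty$, as claimed. I do not anticipate any real obstacle here: the argument is a short measure-theoretic computation, and the only place calling for a little care is confirming the measurability of $g$ and the legitimacy of the Tonelli interchange, both of which are immediate from the non-negativity of $(x-y)^2$.
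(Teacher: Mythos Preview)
Your proof is correct and follows essentially the same route as the paper: both use independence (via Fubini/Tonelli or conditioning) to reduce to the single-point statement that $\E[(\xi_1-y)^2]<\infty$ for some fixed $y$ forces $\E[\xi_1^2]<\infty$, and both dispatch that step with an elementary quadratic inequality. The only cosmetic differences are that the paper argues directly (picking one $y$ in the support of $\xi_2$ and using $\xi_1^2\le 4(\xi_1-y)^2$ on $\{|\xi_1|>2|y|\}$), whereas you argue by contradiction with the global bound $(\xi_1-y)^2\ge\tfrac12\xi_1^2-y^2$; your version is arguably a touch cleaner.
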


\begin{proof}
First note that, by assumption, the random variable $\E[(\xi_1-\xi_2)^2|\xi_2]$ is finite a.s., thus, for a.e. $x$ in the support of $\xi_2$, the expectation $\E[(\xi_1-x)^2]$ is finite. In particular, for any such $x$, we have
\[
\E\big[\xi_1^2\1_{\{|\xi_1|>2|x|\}}\big]
\le \E\big[(2\xi_1-2x)^2\1_{\{|\xi_1|>2|x|\}}\big]
\le 4\E[(\xi_1-x)^2]<\infty,
\]
implying that $\xi_1$ has a finite second moment.
\end{proof}

\begin{lemma}
\label{lem:split-K-dist}
Let $\xi_1,\xi_2,\zeta_1,\zeta_2$ be random variables such that $\xi_1$ is independent of $\xi_2$ and $\zeta_1$ is independent of $\zeta_2$. Then, we have 
\[
d_\mathscr{K}(\xi_1+\xi_2,\zeta_1+\zeta_2)
\le d_\mathscr{K}(\xi_1,\zeta_1)
    +d_\mathscr{K}(\xi_2,\zeta_2).
\]
\end{lemma}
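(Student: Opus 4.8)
The plan is to insert an intermediate ``hybrid'' sum and condition on one summand at a time. Since $d_\mathscr{K}$ depends only on the marginal laws of its two arguments, and the law of $\xi_1+\xi_2$ (resp. $\zeta_1+\zeta_2$) equals the convolution of the laws of $\xi_1$ and $\xi_2$ (resp. $\zeta_1$ and $\zeta_2$) by the assumed independence within each pair, we may assume without loss of generality that $\xi_1,\xi_2,\zeta_1,\zeta_2$ are mutually independent: replacing them by mutually independent variables with the same marginals changes neither side of the claimed inequality. Write $F_i(z)\coloneqq\p(\xi_i\le z)$ and $G_i(z)\coloneqq\p(\zeta_i\le z)$, so that $d_\mathscr{K}(\xi_i,\zeta_i)=\sup_{z\in\R}|F_i(z)-G_i(z)|$ for $i\in\{1,2\}$.

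Next I would fix $x\in\R$ and split
\[
\p(\xi_1+\xi_2\le x)-\p(\zeta_1+\zeta_2\le x)
=\big[\p(\xi_1+\xi_2\le x)-\p(\zeta_1+\xi_2\le x)\big]
+\big[\p(\zeta_1+\xi_2\le x)-\p(\zeta_1+\zeta_2\le x)\big].
\]
For the first bracket, conditioning on $\xi_2$ (which is independent of both $\xi_1$ and $\zeta_1$) yields $\E\big[F_1(x-\xi_2)-G_1(x-\xi_2)\big]$, whose absolute value is at most $\E\big[|F_1(x-\xi_2)-G_1(x-\xi_2)|\big]\le d_\mathscr{K}(\xi_1,\zeta_1)$. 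For the second bracket, conditioning on $\zeta_1$ (independent of $\xi_2$ and $\zeta_2$) yields $\E\big[F_2(x-\zeta_1)-G_2(x-\zeta_1)\big]$, bounded in absolute value by $d_\mathscr{K}(\xi_2,\zeta_2)$ in the same way. Adding the two estimates and taking the supremum over $x\in\R$ gives the claim.

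I do not expect any genuine obstacle here: the only step needing a moment's thought is the reduction to mutual independence, which is justified because both the distances appearing in the inequality and the laws of the two sums are functions of the marginal laws alone. (If one prefers to avoid this reduction, the identical computation goes through verbatim with convolution integrals in place of conditional expectations, using $\p(\xi_1+\xi_2\le x)=\int F_1(x-y)\,\mathrm{Law}(\xi_2)(\D y)$ and the commutativity of convolution to identify the hybrid term $\int F_2(x-y)\,\mathrm{Law}(\zeta_1)(\D y)=\int G_1(x-y)\,\mathrm{Law}(\xi_2)(\D y)$.)
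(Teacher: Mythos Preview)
Your proof is correct and follows essentially the same approach as the paper: both insert the hybrid $\zeta_1+\xi_2$ (equivalently, the convolution $G_1*F_2$) and bound each half by the corresponding Kolmogorov distance. The paper phrases it directly via convolutions and the sup-norm identity $\|F_1*F_2-G_1*G_2\|_\infty\le\|(F_1-G_1)*F_2\|_\infty+\|(F_2-G_2)*G_1\|_\infty$, which is exactly your conditioning argument (and which you also sketch in your parenthetical remark).
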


\begin{proof}
Let $F_j$ and $G_j$ denote the distribution functions of $\xi_j$ and $\zeta_j$, respectively, $j\in\{1,2\}$. Further, 
let $\|\cdot\|_\infty:f\mapsto\sup_{x\in\R}|f(x)|$ denote the supremum norm. The triangle inequality and the distributive and commutativity of the convolution, imply
\[
d_\mathscr{K}(\xi_1+\xi_2,\zeta_1+\zeta_2)
=\|F_1\ast F_2 - G_1\ast G_2\|_\infty
\le \|(F_1-G_1)* F_2\|_\infty
+\|(F_2 - G_2)* G_1\|_\infty.
\]
Then, we have
\[
\|(F_1-G_1)*F_2\|_\infty
\le\int_\R\sup_{x\in\R}\big|F_1(x-y)-G_1(x-y)\big|\D F_2(y)
= d_{\mathscr{K}}(\xi_1,\zeta_1).
\]
Similarly, $\|(F_2 - G_2)* G_1\|_\infty\le d_{\mathscr{K}}(\xi_2,\zeta_2)$, completing the proof.
\end{proof}

Our final ingredient is the following lemma, establishing the asymptotic uniqueness of normalising functions. Despite these results being elementary and well known, we were unable to find a reference in the literature.

\begin{lemma}
\label{lem:asymptotic-uniqueness}
Let $\zeta$ be a non-degenerate random variable. If $d_\mathscr{K}(f_i(t)\xi_t,\zeta)\to 0$ as $t\to\infty$ for two positive measurable functions $f_1,f_2$, then $f_1(t)/f_2(t)\to 1$ as $t\to\infty$. Similarly, if $\zeta$ is absolutely continuous, $d_\mathscr{K}(f_1(t)\xi_t,\zeta)\to 0$ and $f_1(t)/f_2(t)\to 1$ as $t\to\infty$, then $d_\mathscr{K}(f_2(t)\xi_t,\zeta)\to 0$.
\end{lemma}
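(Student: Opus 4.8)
The plan is to view this as a scaling-only convergence-of-types statement and prove the two assertions separately, using only elementary facts about convergence in distribution together with Theorem~\ref{thm:appendix_conv_weak} (the equivalence between weak convergence to an absolutely continuous law and convergence in the Kolmogorov distance).

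For the first assertion, I would set $c(t)\coloneqq f_2(t)/f_1(t)\in(0,\infty)$ and $\eta_t\coloneqq f_1(t)\xi_t$, so that the hypotheses read $\eta_t\cid\zeta$ and $c(t)\eta_t=f_2(t)\xi_t\cid\zeta$ as $t\to\infty$; in particular the families $(\eta_t)_{t\ge1}$ and $(c(t)\eta_t)_{t\ge1}$ are tight. The argument is by contradiction: if $c(t)\not\to1$, then, by compactness of $[0,\infty]$, there is a sequence $t_n\to\infty$ with $c(t_n)\to c\in[0,\infty]\setminus\{1\}$. If $c\in(0,\infty)$, then $c(t_n)\eta_{t_n}\cid c\zeta$ by Slutsky's lemma, and comparing weak limits gives $c\zeta\eqd\zeta$; iterating (and replacing $c$ by $c^{-1}$ if necessary, so that $c<1$) yields $c^n\zeta\eqd\zeta$ for all $n\in\N$, and letting $n\to\infty$ forces $\zeta$ to be a.s.\ equal to a constant, contradicting non-degeneracy. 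If $c=0$, tightness of $(\eta_{t_n})$ gives $c(t_n)\eta_{t_n}\cip0$, hence $\zeta\eqd0$; if $c=\infty$, tightness of $(c(t_n)\eta_{t_n})$ together with $c(t_n)^{-1}\to0$ gives $\eta_{t_n}=c(t_n)^{-1}\big(c(t_n)\eta_{t_n}\big)\cip0$, hence again $\zeta\eqd0$; both contradict non-degeneracy. Therefore $c(t)\to1$, equivalently $f_1(t)/f_2(t)=1/c(t)\to1$.

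For the second assertion I would use that the Kolmogorov distance is invariant under multiplying both arguments by a common positive constant, $d_\mathscr{K}(aU,aV)=d_\mathscr{K}(U,V)$ for $a>0$. Writing $a(t)\coloneqq f_2(t)/f_1(t)$, the triangle inequality for the metric $d_\mathscr{K}$ gives
\[
d_\mathscr{K}(f_2(t)\xi_t,\zeta)
\le d_\mathscr{K}\big(a(t)f_1(t)\xi_t,a(t)\zeta\big)+d_\mathscr{K}\big(a(t)\zeta,\zeta\big)
= d_\mathscr{K}\big(f_1(t)\xi_t,\zeta\big)+d_\mathscr{K}\big(a(t)\zeta,\zeta\big).
\]
The first term tends to $0$ by hypothesis. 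For the second, $f_1(t)/f_2(t)\to1$ gives $a(t)\to1$, so $a(t)\zeta\to\zeta$ almost surely and hence in distribution; since $\zeta$ is absolutely continuous, its distribution function is continuous, and Theorem~\ref{thm:appendix_conv_weak} upgrades this to $d_\mathscr{K}(a(t)\zeta,\zeta)\to0$. Thus $d_\mathscr{K}(f_2(t)\xi_t,\zeta)\to0$.

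I expect the only mildly delicate point to be the continuous-parameter bookkeeping in the first assertion: convergence in distribution as $t\to\infty$ has to be handled through arbitrary sequences $t_n\to\infty$, and the three regimes $c\in(0,\infty)$, $c=0$, $c=\infty$ must be disposed of separately. Everything else reduces to Slutsky's lemma, the scale-invariance of $d_\mathscr{K}$, and the weak-convergence/Kolmogorov-distance equivalence already recorded in Theorem~\ref{thm:appendix_conv_weak}.
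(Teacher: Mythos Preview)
Your argument is correct. Both assertions are proved cleanly, and the handling of the three regimes $c\in(0,\infty)$, $c=0$, $c=\infty$ in the first part is careful and complete.

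For the first assertion your route differs from the paper's. You run the classical convergence-of-types argument: pass to a subsequence along which $c(t_n)=f_2(t_n)/f_1(t_n)$ converges in $[0,\infty]$, use Slutsky and tightness to identify the limit law, and rule out each case $c\ne 1$ by showing it forces $\zeta$ to be degenerate. The paper instead stays in the Kolmogorov metric throughout: using scale-invariance and routing the triangle inequality through $f_1(t)f_2(t)\xi_t$, it obtains
\[
d_\mathscr{K}\big((f_1(t)/f_2(t))\zeta,\zeta\big)
\le d_\mathscr{K}(f_2(t)\xi_t,\zeta)+d_\mathscr{K}(f_1(t)\xi_t,\zeta)\to 0,
\]
and then argues directly that if the ratio stayed bounded away from $1$ on a sequence, one could find a fixed $x$ with $|\p(\zeta\le cx)-\p(\zeta\le x)|>0$, giving a uniform lower bound on $d_\mathscr{K}((f_1/f_2)\zeta,\zeta)$ along that sequence. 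The paper's approach is shorter and avoids subsequence/compactness bookkeeping; yours is more explicit about the boundary cases $c\in\{0,\infty\}$ and makes the non-degeneracy conclusion transparent via the iteration $c^n\zeta\eqd\zeta$. For the second assertion your proof and the paper's are essentially the same (Slutsky plus Theorem~\ref{thm:appendix_conv_weak}), just packaged slightly differently.
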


\begin{proof}
Assume $d_\mathscr{K}(f_i(t)\xi_t,\zeta)\to 0$ for $i\in\{1,2\}$. Since multiplying both arguments by a positive constant does not affect the Kolmogorov distance, the triangle inequality yields
\begin{align*}
d_\mathscr{K}(f_2(t)^{-1}f_1(t)\zeta,\zeta)
=d_\mathscr{K}(f_1(t)\zeta,f_2(t)\zeta)
&\le d_\mathscr{K}(f_1(t)\zeta,f_1(t)f_2(t)\xi_t) 
+ d_\mathscr{K}(f_1(t)f_2(t)\xi_t,f_2(t)\zeta)\\
&= d_\mathscr{K}(\zeta,f_2(t)\xi_t) 
+ d_\mathscr{K}(f_1(t)\xi_t,\zeta)
\to 0,
\quad\text{as }t\to\infty.
\end{align*}
If $\lim_{t\to\infty}f_1(t)/f_2(t)$ does not exist or is not equal to $1$, then there exists some $c\in(0,\infty)\setminus\{1\}$ and a sequence of times $t_n\to\infty$ such that $f_1(t_n)/f_2(t_n)>c>1$ or $f_1(t_n)/f_2(t_n)<c<1$ for all $n\in\N$. We may assume the former without loss of generality. Since $\zeta$ is nontrivial, there exists some $x\in\R\setminus\{0\}$ such that $\ve\coloneqq|\p(\zeta\le c x)-\p(\zeta\le x)|>0$. Thus, we have $d_\mathscr{K}(f_2(t_n)^{-1}f_1(t_n)\zeta,\zeta)\ge\ve$ for $n\in\N$, a contradiction. Thus, $\lim_{t\to\infty}f_1(t)/f_2(t)=1$, as claimed.

Next, assume $\zeta$ is absolutely continuous, $d_\mathscr{K}(f_1(t)\xi_t,\zeta)\to 0$ and $f_1(t)/f_2(t)\to 1$ as $t\to\infty$. Then Slutsky's theorem gives $f_2(t)\xi_t=(f_2(t)/f_1(t))f_1(t)\xi_t\cid\zeta$ and the result follows from Theorem~\ref{thm:appendix_conv_weak}.
\end{proof}

We now turn to the proof of Theorem~\ref{thm:int-3}. Recall that the generating triplet of the L\'evy process $\bm{X}$,  corresponding to the cutoff function $\bm{v}\mapsto\1_{\{|\bm{v}|<1\}}$, is $(\bm{\Sigma},\bm{\gamma},\nu)$. The components of $\bm{X}$ (resp. $\bm{Z}$; $\bm{v}\in\R^d$; $\bm{M}\in\R^{d\times d}$) are denoted by $X^j$ (resp. $Z_j$; $\bm{v}_j$; $\bm{M}_{i,j}$) for $i,j\in\{1,\ldots,d\}$.

\begin{proof}[Proof of Theorem~\ref{thm:int-3}]
First, we reduce the problem to the one-dimensional case in Theorem~\ref{thm:int-3_one-dim_version}. 
Since $\E[|\bm{X}_1|^2]=\E[|X_1^1|^2+\cdots +|X_1^d|^2]$, to establish its finiteness, it suffices to show that $\E[|X_1^{\bm{v}}|^2]<\infty$, where $X^{\bm{v}}=\bm{v}^\intercal\bm{X}$ and $|\bm{v}|=1$. Moreover, by Theorem~\ref{thm:appendix_conv_weak}, for the limit of the convex distance, it suffices to show the weak convergence, which is further equivalent to $B_{\bm{v}}(t)^{-1}(X_t^{\bm{v}}-A^{\bm{v}}(t))\cid Z$ as $t\to\infty$ for any $\bm{v}\in\R^d$ with $|\bm{v}|=1$, where $A^{\bm{v}}\coloneqq\bm{v}^\intercal\bm{A}(t)$, $B_{\bm{v}}(t)\coloneqq|\bm{v}^\intercal\bm{B}(t)|$ and $Z\eqd Z_1$. Note that
\[
d_{\mathscr{C}}(\bm{X}_t-\bm{A}(t),\bm{B}(t)\bm{Z})
\ge d_{\mathscr{K}}(X_t^{\bm{v}}-A^{\bm{v}}(t),B_{\bm{v}}(t)Z)
=d_{\mathscr{K}}( B_{\bm{v}}(t)^{-1} (X_t^{\bm{v}}-A^{\bm{v}}(t)), Z).
\]
The assumed integrability then implies 
\begin{equation}
\label{eq:int-v-proj}
t\mapsto t^{-1}d_{\mathscr{K}}( B_{\bm{v}}(t)^{-1} (X_t^{\bm{v}}-A^{\bm{v}}(t)), Z)\in\Loc(+\infty).    
\end{equation} 
It suffices to show that the integrability in~\eqref{eq:int-v-proj} implies 
$\E[|X_1^{\bm{v}}|^2]< \infty$ and $B_{\bm{v}}(t)^{-1} (X_t^{\bm{v}}-t\E X_1^{\bm{v}})\cid Z$.

Note that $B_{\bm{v}}$ satisfies the assumptions in Theorem~\ref{thm:int-3_one-dim_version}. Indeed, $B_{\bm{v}}(t)^2=\bm{v}^\intercal \bm{B}(t)^\intercal \bm{B}(t) \bm{v}\to\infty$ by assumption and the fact that $|{\bm{v}}|=1$. The limit $ B_{\bm{v}}(t)^{-1} B_{\bm{v}}(f(t)) \to 1$ as $t \to \infty$ for any monotonic function $f(t)$ with $f(t)/t \to 1$ as $t \to \infty$ follows from the limit $\bm{B}(t)^{-1}\bm{B}(f(t)) \to \bm{I}_d$. Indeed, since
\[
B_{\bm{v}}(f(t))^2
=\bm{v}^\intercal\bm{B}(f(t))^\intercal  \bm{B}(f(t))\bm{v}
=\bm{v}^\intercal[\bm{B}(t)^{-1} \bm{B}(f(t))]^\intercal
    \bm{B}(t)^\intercal\bm{B}(t) 
    [\bm{B}(t)^{-1} \bm{B}(f(t))]\bm{v},
\]
it follows that $ B_{\bm{v}}(f(t))^2 \sim \bm{v}^\intercal\bm{B}(t)^\intercal \bm{B}(t)\bm{v}= B_{\bm{v}}(t)^2$, implying $B_{\bm{v}}(t)^{-1}B_{\bm{v}}(f(t)) \to 1$ as $t\to\infty$.

Let $\check{X}^{\bm{v}}$ be an independent copy of $X^{\bm{v}}$ and let $Y\coloneqq (X^{\bm{v}}-\check{X}^{\bm{v}})/\sqrt{2}$ be a symmetrisation of $X^{\bm{v}}$. Integrability in~\eqref{eq:int-v-proj} and Lemma~\ref{lem:split-K-dist} imply $t\mapsto t^{-1}d_\mathscr{K}(B_{\bm{v}}(t)^{-1}Y_t,Z)\in\Loc(+\infty)$.  Applying Theorem~\ref{thm:int-3_one-dim_version} to $Y$ yields $\E[Y_1^2]<\infty$ and $d_\mathscr{K}(B_{\bm{v}}(t)^{-1}Y_t,Z)\to 0$. Next, Lemma~\ref{lem:second-moment-from-symm} implies that $\E[(X^{\bm{v}}_1)^2]<\infty$ and in fact, $\varsigma^2\coloneqq\Var(X_1^{\bm{v}})=\Var(Y_1)$. The standard CLT and Theorem~\ref{thm:appendix_conv_weak} imply $d_\mathscr{K}(Y_t/\sqrt{t\varsigma^2}, Z)\to 0$. Hence $B_{\bm{v}}(t)\sim\sqrt{t\varsigma^2}$ by Lemma~\ref{lem:asymptotic-uniqueness}. Then, the standard CLT and Lemma~\ref{lem:asymptotic-uniqueness} again imply that $B_{\bm{v}}(t)^{-1}(X_t^{\bm{v}}-t\E X_1^{\bm{v}})\cid Z$, completing the proof. 
\end{proof}

\section{Concluding remarks}
\label{sec:conclusion}
In Theorem~\ref{thm:int-0} we proved that the second moment of $|\bm{X}_1|$ being finite is equivalent to the function $t \mapsto t^{-1}d_\mathscr{A}(\bm{X}_t-\bm{A}(t),\bm{B}(t) \bm{Z} )$ being locally integrable at $+\infty$ for a specific class of time-dependent matrices $\bm{B}(t)$ and centering vectors $\bm{A}(t)$ for both convex $\mathscr{A}=\mathscr{C}$ and multivariate Kolmogorov $\mathscr{A}=\mathscr{K}$ distances to Gaussianity in $\R^d$.  This constitutes a characterisation of L\'evy processes $\bm{X}$ with finite second moment in terms of the local integrability of $t \mapsto t^{-1}d_{\mathscr{A}}(\bm{X}_t-\bm{A}(t),\bm{B}(t) \bm{Z})$, but it also establishes a speed of convergence to $0$ for the distance $d_{\mathscr{A}}(\bm{X}_t-\bm{A}(t),\bm{B}(t) \bm{Z})$ for $\mathscr{A}\in\{\mathscr{K},\mathscr{C}\}$. Indeed, since $t \mapsto t^{-1}$ is not locally integrable at $+\infty$, Theorem~\ref{thm:int-0} implies that $t \mapsto d_{\mathscr{A}}(\bm{X}_t-\bm{A}(t),\bm{B}(t) \bm{Z})$ is sufficiently small to make the function $t \mapsto t^{-1}d_{\mathscr{A}}(\bm{X}_t-\bm{A}(t),\bm{B}(t) \bm{Z})$ locally integrable at infinity. 

Moreover, when $|\bm{X}_1|$ has a finite second moment, we constructed $\bm{B}(t)$ and $\bm{A}(t)$ explicitly in terms of the characteristics of a genuinely $d$-dimensional L\'evy process $\bm{X}$ as follows: 
\begin{itemize}
    \item 
    pick $\kappa\ge 1$ such that the matrix $\bm{\Sigma}(t)\coloneqq 
\bm{\Sigma}+\int_{\mathfrak{B}_{\bm{0}}(\kappa\sqrt{t})}\bm{v}\bm{v}^\intercal \nu(\D \bm{v})$ has full rank for $t\ge 1$ and let $\bm{\Delta}(t)$ be the unique symmetric $d \times d$ matrix such that $\bm{\Delta}(t)^2
= \bm{\Sigma}(t)$. Then set $\bm{B}_c(t)\coloneqq   \sqrt{t}\bm{\Delta}(t)$.
    \item For the centering, set $\bm{A}_c(t)\coloneqq t\E[\bm{X}_1]$.
\end{itemize}
We proved that in general, if local integrability at infinity is the goal, one cannot choose the scaling $\bm{B}(t)=\sqrt{t}\bm{\sigma}$. Indeed, in Theorem~\ref{thm:int-2} we show that $t\mapsto t^{-1}d_\mathscr{A}(\bm{X}_t/\sqrt{t}, \bm{\sigma}\bm{Z})$ is locally integrable at $+\infty$, for either $\mathscr{A}=\mathscr{C}$ or $\mathscr{A} = \mathscr{K}$, if and only if $|\bm{X}_1|$ has a finite $g$-moment for $g:x\mapsto x^2\log\max\{1,x\}$. 

In discrete time, Berry-Esseen-type bounds for independent but not necessarily identically distributed random variables and vectors are of great interest (see, e.g.,~\cite{MR722426,MR4003566}) as such increments arise frequently in applications. 
We believe our methods of proof of the implication Theorem~\ref{thm:int-0}: (a)$\implies$(b)  could be extended to  additive processes (i.e., time-inhomogeneous L\'evy processes as defined in~\cite[Def.~1.6]{SatoBookLevy}) with sufficiently regular characteristics. The second moment assumption and the construction of the matrix $\bm{B}_c$ would have to be replaced with appropriate expressions in terms of space-time integrals of the time-dependent L\'evy measure of the additive process. To the best of our knowledge, the \textit{equivalence} between the finiteness of the second moment and the rate of convergence in the Kolmogorov distance (i.e., the analogue of our Theorem~\ref{thm:int-0}) has, in discrete time, only been established in~\cite{MR334308} for one-dimensional independent and identically distributed increments. Moreover, by embedding a random walk into a compound Poisson process, our results appear to enable an extension of this equivalence to multivariate (discrete time) random walks, see Subsection~\ref{subsec:RW} below.

Another interesting question is whether results such as the ones from Theorems~\ref{thm:int-0} \&~\ref{thm:int-2} hold for convergence-determining metrics other than $d_\mathscr{A}$ for $\mathscr{K}\subset\mathscr{A}\subset\mathscr{C}$. This is in general an open problem. In Subsection~\ref{subsec:wasserstein} we discuss some simple consequences of our results for the Wasserstein distance.

\subsection{From continuous to discrete time}
\label{subsec:RW}

Consider a random walk $\bm{Y}=(\bm{Y}_n)_{n\in\N}$ in $\R^d$, i.e. the increments of $\bm{Y}$ are independent and identically distributed. Let $(N_t)_{t\ge 0}$ be a standard Poisson process independent of $\bm{Y}$. Then the compound Poisson process $\bm{X}=(\bm{X}_t)_{t\geq0}$, given by $\bm{X}_t\coloneqq\bm{Y}_{N_t}$, is a L\'evy process. The theorems of Section~\ref{sec:intro} may thus be applied to $\bm{X}$, yielding information about $\bm{Y}$. In fact, under this embedding, the L\'evy measure of $\bm{X}$ is simply the law of $\bm{Y}_1$. Thus, $\E[\bm{X}_1]=\E[\bm{Y}_1]$ and
\[
\bm\Sigma(t)
=\E\big[(\bm{Y}_1-\E[\bm{Y}_1])(\bm{Y}_1-\E[\bm{Y}_1])^\intercal\1_{\mathfrak{B}_{\bm{0}}(\kappa\sqrt{t})}(\bm{Y}_1)\big],\quad \bm{\Delta}(t)^2
= \bm{\Sigma}(t)\quad\&\quad \bm{B}_c(t)\coloneqq   \sqrt{t}\bm{\Delta}(t),
\quad t\ge 0.
\]
Discrete-time extensions of our results in Section~\ref{sec:intro} for random walks could thus be derived form the results presented here by merely controlling the ``discretisation'' errors, instead of developing a full proof in discrete time. 

Indeed, assuming $\E[\bm{X}_1]=\E[\bm{Y}_1]=\bm{0}$ for simplicity, the triangle inequality yields
\[
d_{\mathscr{A}}(\bm{Y}_n,\bm{B}_c(n)\bm{Z})
\le d_{\mathscr{A}}(\bm{X}_t,\bm{B}_c(t)\bm{Z})
+ d_{\mathscr{A}}(\bm{Y}_n,\bm{X}_t)
+ d_{\mathscr{A}}(\bm{B}_c(n)\bm{Z},\bm{B}_c(t)\bm{Z}),
\]
for any $n\in\N$ and $t\ge 0$. (Similarly, a lower bound on $d_{\mathscr{A}}(\bm{Y}_n,\bm{B}_c(n)\bm{Z})$ can be constructed as in the proof of Theorem~\ref{thm:int-2}.) Setting $n\coloneqq \lfloor t\rfloor$, multiplying the inequality by $1/n$ and integrating over all $t\ge 1$, on the left-hand side we obtain the sum 
$\sum_{n\in\N}d_{\mathscr{A}}(\bm{Y}_n,\bm{B}_c(n)\bm{Z})/n$.
On the right-hand side, the first integral  is controlled by Theorem~\ref{thm:int-0} and the latter two correspond to ``discretisation'' errors, which one would need to control directly. This embedding paves a way to establishing discrete time analogues of the results in Section~\ref{sec:intro} for (discrete-time) random walks. 

\subsection{The Wasserstein distance}
\label{subsec:wasserstein}
In recent times, the Wasserstein (or optimal transport) distance has become the focus of many works, especially when analysing rates of convergence, see e.g.~\cite{MR3833470,WassersteinPaper,MR4168389}. The fact that the Wasserstein distance can be used to control the Kolmogorov distance in $\R^d$ (see~\cite{MR4533915}) means that upper bounds established for the Wasserstein distance yield analogous bounds for the Kolmogorov distance. However, the lack of a converse relationship (i.e., a bound on the Wasserstein distance in terms of the Kolmogorov or convex distance) due to the integrability constraints inherent to the Wasserstein distance means that it is hard to establish results as in Theorems~\ref{thm:int-0} \&~\ref{thm:int-2}. In this section, we establish results where available, by using the Wasserstein distance as an upper bound on the Kolmogorov distance. A full characterisation of the second moment in terms of the local integrability of the Wasserstein distance at infinity remains an open problem.

Recall that for random vectors $\bm{\xi}$ and $\bm{\zeta}$ in $\R^d$, the $q$-Wasserstein distance is defined as
\[
\mW_q(\bm{\xi},\bm{\zeta})\coloneqq\inf_{\bm{\xi}'\eqd\bm{\xi}, \bm{\zeta}'\eqd\bm{\zeta}}\E[|\bm{\xi}'-\bm{\zeta}'|^q]^{1/(q\vee 1)},
\qquad q>0,
\] 
with the infimum taken over all couplings $(\bm{\xi}',\bm{\zeta}')$ with $\bm{\xi}'\eqd\bm{\xi}$ and $\bm{\zeta}'\eqd\bm{\zeta}$. In the context of the central limit theorem, the $q$-Wasserstein distance is convergence-determining, as shown next.

\begin{lemma}\label{lem:wasser_converg}
Let $\bm{X}$ be a L\'evy process and $\bm{Z}$ be a standard normal random vector with $\E[|\bm{X}_1|^2]<\infty$. 
Then, for any $q \in [1,2]$, the limit $\lim_{t \to \infty}\mW_q(\bm{X}_t/\sqrt{t},\bm{Z})=0$ is equivalent to $\bm{X}_t/\sqrt{t} \cid \bm{Z}$ as $t \to \infty$.
\end{lemma}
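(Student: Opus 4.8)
The statement has two directions. The forward implication is immediate: for $q\ge1$ one has $\mW_1\le\mW_q$ and $\mW_1$ metrises weak convergence on the space of probability measures with finite first moment, so $\mW_q(\bm{X}_t/\sqrt t,\bm{Z})\to0$ forces $\bm{X}_t/\sqrt t\cid\bm{Z}$ as $t\to\infty$. The content is the reverse implication; the plan is to first reduce $\mW_q$-convergence to a moment statement and then establish it.

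So assume $\bm{X}_t/\sqrt t\cid\bm{Z}$. Since $\E[|\bm{X}_1|^2]<\infty$, the multivariate CLT applied to the centred process gives $(\bm{X}_t-t\E\bm{X}_1)/\sqrt t\cid\bm{\sigma}\bm{Z}$, where $\bm{\sigma}^2\coloneqq\Var(\bm{X}_1)$. If $\E\bm{X}_1\ne\bm{0}$, set $\bm{u}\coloneqq\E\bm{X}_1/|\E\bm{X}_1|$: then $\bm{u}^\intercal\bm{X}_t/\sqrt t$ is tight (it converges weakly to $\bm{u}^\intercal\bm{Z}$), yet $\bm{u}^\intercal\bm{X}_t/\sqrt t-\sqrt t\,|\E\bm{X}_1|=\bm{u}^\intercal(\bm{X}_t-t\E\bm{X}_1)/\sqrt t$ also converges weakly (to a centred Gaussian), which is impossible because the deterministic shift $\sqrt t\,|\E\bm{X}_1|$ diverges. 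Hence $\E\bm{X}_1=\bm{0}$; comparing the two weak limits of $\bm{X}_t/\sqrt t$ then gives $\bm{\sigma}\bm{Z}\eqd\bm{Z}$, i.e.\ $\bm{\sigma}^2=\bm{I}_d$. Consequently $\E[|\bm{X}_t/\sqrt t|^2]=t^{-1}\trace(\Var(\bm{X}_t))=\trace(\bm{\sigma}^2)=d=\E[|\bm{Z}|^2]$ for every $t>0$.

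I would then upgrade the weak convergence to $\mW_q$-convergence. Fix an arbitrary sequence $t_n\to\infty$; it suffices to show $\mW_q(\bm{X}_{t_n}/\sqrt{t_n},\bm{Z})\to0$. By the Skorokhod representation theorem there exist, on a common probability space, random vectors $\bm{\eta}_n\eqd\bm{X}_{t_n}/\sqrt{t_n}$ and $\bm{\eta}\eqd\bm{Z}$ with $\bm{\eta}_n\to\bm{\eta}$ almost surely; using this coupling and $q\vee1=q$ one gets $\mW_q(\bm{X}_{t_n}/\sqrt{t_n},\bm{Z})^q\le\E[|\bm{\eta}_n-\bm{\eta}|^q]$, so it remains to prove $\E[|\bm{\eta}_n-\bm{\eta}|^q]\to0$. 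For $q\in[1,2)$ one has $\sup_n\E[(|\bm{\eta}_n|^q)^{2/q}]=\sup_n\E[|\bm{\eta}_n|^2]=d<\infty$ with exponent $2/q>1$, so $\{|\bm{\eta}_n|^q\}_n$, and hence $\{|\bm{\eta}_n-\bm{\eta}|^q\}_n$, is uniformly integrable; with $|\bm{\eta}_n-\bm{\eta}|^q\to0$ a.s., Vitali's theorem gives the claim. For $q=2$ one has $\bm{\eta}_n\to\bm{\eta}$ a.s.\ and $\E[|\bm{\eta}_n|^2]=d=\E[|\bm{\eta}|^2]$, so the Riesz-type criterion (a.e.\ convergence together with convergence of $L^2$-norms implies $L^2$-convergence) yields $\E[|\bm{\eta}_n-\bm{\eta}|^2]\to0$. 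This proves $\mW_q(\bm{X}_t/\sqrt t,\bm{Z})\to0$.

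The only mildly delicate point is the endpoint $q=2$, where $L^2$-boundedness of $\{|\bm{\eta}_n|^2\}_n$ alone does not give uniform integrability of $\{|\bm{\eta}_n-\bm{\eta}|^2\}_n$; this is circumvented using the exact identity $\E[|\bm{X}_t/\sqrt t|^2]=d$ established above. Everything else is routine. Alternatively, the whole reverse implication follows at once from the well-known characterisation that, for $q\ge1$, $\mW_q$-convergence is equivalent to weak convergence together with convergence of the $q$-th absolute moments, applied with $\E[|\bm{X}_t/\sqrt t|^q]\to\E[|\bm{Z}|^q]$.
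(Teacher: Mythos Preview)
Your proof is correct and follows essentially the same route as the paper. Both arguments first deduce $\E\bm{X}_1=\bm{0}$ and $\Var(\bm{X}_1)=\bm{I}_d$ from the assumed weak convergence, then use the resulting identity $\E[|\bm{X}_t/\sqrt t|^2]=d$ to upgrade weak convergence to $\mW_q$-convergence via uniform integrability for $q<2$ and the exact second-moment match for $q=2$; the only cosmetic difference is that the paper cites Villani's characterisation ($\mW_q$-convergence $\Leftrightarrow$ weak convergence plus convergence of $q$-th moments) directly, while you reprove that step by hand via Skorokhod representation together with Vitali's theorem and the Riesz criterion---an alternative you yourself mention at the end.
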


\begin{proof}
It follows from~\cite[Thm~5.11]{MR3409718} that $\lim_{t \to \infty}\mW_q(\bm{X}_t/\sqrt{t},\bm{Z})=0$ if and only if $\bm{X}_t/\sqrt{t} \cid \bm{Z}$ and $t^{-q/2}\E[|\bm{X}_t|^q] \to \E[|\bm{Z}|^q]$ as $t \to \infty$. 
The convergence of moments is obvious for $q=2$  since the scaling giving weak convergence is asymptotically unique by Lemma~\ref{lem:asymptotic-uniqueness} so the standard CLT implies that $\E[\bm{X}_1]=\bm{0}$ and $\E[\bm{X}_1\bm{X}_1^\intercal]=\bm{I}_d$. For $q<2$, the convergence of the $q$-moment follows by dominated convergence, Jensen's inequality and $\sup_{t \ge 1} t^{-1}\E[|\bm{X}_t|^2]= \E[|\bm{X}_1|^2]<\infty$, i.e. $t^{-q/2}|\bm{X}_t|^q$ is uniformly bounded in $L^{2/q}$ by $\E[|\bm{X}_1|^2]$.
\end{proof} 

\begin{remark}
The convergence in $\mW_q$ may fail if the $q$-moments do not converge, which is why we assume $|\bm{X}_1|$ has a finite second moment in Lemma~\ref{lem:wasser_converg}.
\end{remark} 

The following result is a direct corollary of Theorem~\ref{thm:int-2}, by using the Wasserstein distance as an upper bound on the Kolmogorov distance.
\begin{corollary}
Let $\bm{X}$ and $\bm{Z}$ be as in Theorem~\ref{thm:int-2} with $\bm{\sigma}^2=\bm{I}_d$, i.e. $\bm{X}$ has finite second moment and $\bm{Z}$ has an absolutely continuous distribution. If $q \in [1,2]$ and $\E[|\bm{X}_1|^2\max\{0,\log(|\bm{X}_1|)\}]=\infty$, then $t \mapsto t^{-1}\mW_q(\bm{X}_t/\sqrt{t},\bm{Z})^{1/2} \notin \Loc(+\infty)$.
\end{corollary}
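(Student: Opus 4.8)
The plan is to derive this directly from Theorem~\ref{thm:int-2}, using the Wasserstein distance merely as an upper bound for the multivariate Kolmogorov distance. As in Theorem~\ref{thm:int-2}, I assume $\E[\bm{X}_1]=\bm{0}$ (which is needed for the conclusion: for $\E[\bm{X}_1]\ne\bm 0$ one has $\mW_q(\bm{X}_t/\sqrt{t},\bm{Z})\ge\mW_1(\bm{X}_t/\sqrt{t},\bm{Z})\ge\sqrt{t}\,|\E[\bm{X}_1]|$, which would make $t\mapsto t^{-1}\mW_q(\bm{X}_t/\sqrt{t},\bm{Z})^{1/2}$ locally integrable at $+\infty$). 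Since $\bm{\sigma}$ is the symmetric square root of $\bm{\sigma}^2=\bm{I}_d$, we have $\bm{\sigma}=\bm{I}_d$ and hence $\bm{\sigma}\bm{Z}=\bm{Z}$.

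First I would record two elementary comparisons. Because the standard Gaussian law has a bounded density, \cite{MR4533915} provides a constant $c=c_d\in(0,\infty)$, depending only on $d$, such that $d_{\mathscr{K}}(\bm{\xi},\bm{Z})\le c\,\mW_1(\bm{\xi},\bm{Z})^{1/2}$ for every random vector $\bm{\xi}$ in $\R^d$. Moreover, for $q\ge 1$, Jensen's inequality applied to an arbitrary coupling $(\bm{\xi}',\bm{\zeta}')$ gives $\E[|\bm{\xi}'-\bm{\zeta}'|]\le\E[|\bm{\xi}'-\bm{\zeta}'|^{q}]^{1/q}$, and taking the infimum over couplings yields $\mW_1(\bm{\xi},\bm{Z})\le\mW_q(\bm{\xi},\bm{Z})$. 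Combining these two facts, for every $t\ge 1$,
\[
t^{-1}\mW_q(\bm{X}_t/\sqrt{t},\bm{Z})^{1/2}
\ge t^{-1}\mW_1(\bm{X}_t/\sqrt{t},\bm{Z})^{1/2}
\ge c^{-1}\,t^{-1}d_{\mathscr{K}}(\bm{X}_t/\sqrt{t},\bm{Z}),
\]
all three functions being non-negative on $[1,\infty)$.

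Next I would apply Theorem~\ref{thm:int-2} with $\mathscr{A}=\mathscr{K}$, which is admissible since $\mathscr{K}$ satisfies the required hypothesis on $\mathscr{A}$ via $U=(-\infty,-1]$ (as observed just after the statement of Theorem~\ref{thm:int-2}). The assumption $\E[|\bm{X}_1|^2\max\{0,\log(|\bm{X}_1|)\}]=\infty$ together with Theorem~\ref{thm:int-2} (in contrapositive form) gives that $t\mapsto t^{-1}d_{\mathscr{K}}((\bm{X}_t-t\E\bm{X}_1)/\sqrt{t},\bm{\sigma}\bm{Z})\notin\Loc(+\infty)$; with $\E[\bm{X}_1]=\bm{0}$ and $\bm{\sigma}=\bm{I}_d$ this function coincides with $t\mapsto t^{-1}d_{\mathscr{K}}(\bm{X}_t/\sqrt{t},\bm{Z})$. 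By the displayed inequality, the non-negative function $t\mapsto t^{-1}\mW_q(\bm{X}_t/\sqrt{t},\bm{Z})^{1/2}$ dominates $t\mapsto c^{-1}t^{-1}d_{\mathscr{K}}(\bm{X}_t/\sqrt{t},\bm{Z})$, which is not locally integrable at $+\infty$; hence neither is the former, proving the corollary.

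There is no genuine obstacle here, since Theorem~\ref{thm:int-2} carries the load; the two points to get right are the \emph{direction} of the comparison---one needs an \emph{upper} bound $d_{\mathscr{K}}\le c\,\mW_q^{1/2}$, so that the non-integrability of $t^{-1}d_{\mathscr{K}}$ transfers to $t^{-1}\mW_q^{1/2}$---and the exponent $1/2$ in the Kolmogorov--Wasserstein inequality, which is precisely what produces the square root in the statement. Note that no analogous conclusion for $t\mapsto t^{-1}\mW_q(\bm{X}_t/\sqrt{t},\bm{Z})$, without the square root, follows by this route, since $d_{\mathscr{K}}$ does not bound $\mW_q$ from below.
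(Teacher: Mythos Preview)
Your proof is correct and follows essentially the same route as the paper: reduce $\mW_q$ to $\mW_1$ by monotonicity in $q$, bound $d_{\mathscr{K}}\le c\,\mW_1^{1/2}$ via~\cite{MR4533915}, and invoke the contrapositive of Theorem~\ref{thm:int-2}.

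One minor slip in your parenthetical aside: if $\E[\bm{X}_1]\ne\bm{0}$, your lower bound $\mW_q(\bm{X}_t/\sqrt{t},\bm{Z})\ge\sqrt{t}\,|\E[\bm{X}_1]|$ gives $t^{-1}\mW_q^{1/2}\gtrsim t^{-3/4}$, which is \emph{not} in $\Loc(+\infty)$, so the conclusion of the corollary would hold trivially in that case rather than fail as you suggest. This does not affect your main argument.
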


\begin{proof}
By definition of $\mW_q$ with $q \ge 1$, it follows that $\mW_q(\bm{X}_t/\sqrt{t},\bm{Z}) \ge \mW_1(\bm{X}_t/\sqrt{t},\bm{Z})$. Hence, it suffices to show that $t \mapsto t^{-1}\mW_1(\bm{X}_t/\sqrt{t},\bm{Z})^{1/2} \notin \Loc(+\infty)$. By~\cite[Prop.~2.4]{MR4533915} for $m=1$, there exists a positive finite constant $C$ such that $d_{\mathscr{K}}(\bm{X}_t/\sqrt{t},\bm{Z}) \le C \mW_1(\bm{X}_t/\sqrt{t},\bm{Z})^{1/2}$. Theorem~\ref{thm:int-2} yields $t \mapsto t^{-1}d_{\mathscr{K}}(\bm{X}_t/\sqrt{t},\bm{Z}) \notin \Loc(+\infty)$, since $\E[|\bm{X}_1|^2\max\{0,\log(|\bm{X}_1|)\}]=\infty$, concluding the proof. 
\end{proof}

Despite the fact that, in the CLT, the convergence in distribution is equivalent to the convergence in $\mW_q$ (as established in Lemma~\ref{lem:wasser_converg} above), an analogous result to Theorem~\ref{thm:int-0} for $\mW_q$ does not follow easily from the work in the present paper. This is due to the lack of bounds dominating the Wasserstein distance $\mW_q$ in terms of the convex distance. However, by appropriately modifying the arguments, the proof of Theorem~\ref{thm:int-0} suggests an approach to an analogous characterisation result for the finiteness of the second moment in terms of the integrability of the Wasserstein distance $\mW_q$
with respect to the measure $t^{-1}\D t$ at infinity.

\section*{Acknowledgements}

\thanks{
\noindent
JGC and AM were supported by EPSRC grant EP/V009478/1 and The Alan Turing Institute under the EPSRC grant EP/N510129/1; JGC was also supported by DGAPA-PAPIIT grant 36-IA104425; AM was also supported by EPSRC grant EP/W006227/1; DKB is supported by AUFF NOVA grant AUFF-E-2022-9-39. 
The authors would like to thank the Isaac Newton Institute for Mathematical Sciences, Cambridge, for support during the INI satellite programme \textit{Heavy tails in machine learning}, hosted by The Alan Turing Institute, London, and the INI programme \textit{Stochastic systems for anomalous diffusion} hosted at INI in Cambridge, where work on this paper was undertaken. This work was supported by EPSRC grant EP/R014604/1 and by a short visit sponsored by CIC grant COIC/STIA/10133/2025. 

The authors would also like to thank Robert E Gaunt and Martin Rai\v{c} for useful comments.
}

\bibliographystyle{abbrv}
\bibliography{referencer}


\appendix 
\section{Convergence metrics and the proof of the characterisation of the multidimensional domain of normal attraction}
\label{sec:appendix}

The following classical extension of P\'olya's theorem  is due to Ranga Rao~\cite{RaoWeak}. Recall that the family $\mathscr{C}\coloneqq\{A \in \mathcal{B}(\R^d):A\text{ is convex}\}$ denotes the set of all convex Borel subsets $\mathcal{B}(\R^d)$ of $\R^d$ and $\mathscr{K}\coloneqq\{(-\infty,x_1]\times\cdots\times(-\infty,x_d]:x_1,\ldots,x_d \in \R\}$ denotes the set of all hyper-rays in $\R^d$.

\begin{theorem}[{\cite[Thms~3.4~\&~4.2]{RaoWeak}}]\label{thm:appendix_conv_weak}
Let be $(\bm{\xi}_n)_{n \ge 0}$ be a sequence of random vectors in $\R^d$ and $\bm{\xi}$ be an absolutely continuous random vector in $\R^d$. Then the following limits (as $n\to\infty$) are equivalent:
\begin{align}
\bm{\xi}_n \cid \bm{\xi}, \qquad 
d_\mathscr{K}(\bm{\xi}_n, \bm{\xi} ) \to 0,\qquad
d_\mathscr{C}(\bm{\xi}_n, \bm{\xi} ) \to 0. \end{align}
\end{theorem}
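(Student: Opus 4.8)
The plan is to close the cycle of implications $d_\mathscr{C}(\bm{\xi}_n,\bm{\xi})\to0\Rightarrow d_\mathscr{K}(\bm{\xi}_n,\bm{\xi})\to0\Rightarrow\bm{\xi}_n\cid\bm{\xi}\Rightarrow d_\mathscr{C}(\bm{\xi}_n,\bm{\xi})\to0$, which yields all three equivalences at once. The first implication is immediate from the inclusion $\mathscr{K}\subset\mathscr{C}$. For the second, convergence in $d_\mathscr{K}$ means the multivariate distribution functions converge uniformly and hence pointwise to that of $\bm{\xi}$, which is a classical sufficient condition for $\bm{\xi}_n\cid\bm{\xi}$ (equivalently, $\p(\bm{\xi}_n\in R)\to\p(\bm{\xi}\in R)$ for every box $R$ whose boundary is $\bm{\xi}$-null, whence the Portmanteau theorem applies). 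All the substance lies in the third implication.

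For $\bm{\xi}_n\cid\bm{\xi}\Rightarrow d_\mathscr{C}(\bm{\xi}_n,\bm{\xi})\to0$, set $\mu\coloneqq\mathcal{L}(\bm{\xi})$, which is absolutely continuous by hypothesis, fix $\varepsilon>0$, and assemble three ingredients. First, tightness of $(\bm{\xi}_n)_n$ (a consequence of weak convergence) provides $R$ with $\sup_n\p(|\bm{\xi}_n|>R)<\varepsilon$ and $\p(|\bm{\xi}|>R)<\varepsilon$; since $A\cap\mathfrak{B}_{\bm{0}}(R)$ is convex whenever $A$ is, it then suffices to approximate $\p(\bm{\xi}_n\in A)$ by $\mu(A)$ uniformly over convex $A\subseteq\mathfrak{B}_{\bm{0}}(R)$, up to an additive $2\varepsilon$. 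Second, a \emph{uniform collar estimate}: writing $K^{+\delta}\coloneqq K+\mathfrak{B}_{\bm{0}}(\delta)$ and $K^{-\delta}\coloneqq\{x:x+\mathfrak{B}_{\bm{0}}(\delta)\subseteq K\}$ for a convex body $K$, one shows that $\omega(\delta)\coloneqq\sup\{\mu(K^{+\delta}\setminus K^{-\delta}):K\subseteq\mathfrak{B}_{\bm{0}}(R)\text{ convex}\}\to0$ as $\delta\downarrow0$. Indeed, Steiner's formula together with the monotonicity of quermassintegrals gives, for any convex $K\subseteq\mathfrak{B}_{\bm{0}}(R)$, the uniform bound $\Leb(K^{+\delta})-\Leb(K^{-\delta})\le\kappa_d((R+\delta)^d-(R-\delta)^d)$ with $\kappa_d=\Leb(\mathfrak{B}_{\bm{0}}(1))$, which vanishes as $\delta\downarrow0$; since $\mu\ll\Leb$, the $\varepsilon$--$\delta$ characterisation of absolute continuity converts this into $\omega(\delta)\to0$. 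Third, Blaschke's selection theorem: the family of convex compact subsets of $\mathfrak{B}_{\bm{0}}(R)$ is compact in the Hausdorff metric, so for each $\delta$ there is a finite net $K_1,\dots,K_m$ such that every convex $A\subseteq\mathfrak{B}_{\bm{0}}(R)$ admits an index $i$ with $K_i^{-\delta}\subseteq\overline{A}\subseteq K_i^{+\delta}$ (after a harmless shrinking/dilation to make the matching exact, using that $\partial A$ is Lebesgue-, hence $\mu$-, null).

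With these in place the estimate closes by a standard argument: pick $\delta$ with $\omega(\delta)<\varepsilon$, then the finite net; since boundaries of convex sets are $\mu$-null, the Portmanteau theorem yields $\p(\bm{\xi}_n\in K_i^{\pm\delta})\to\mu(K_i^{\pm\delta})$ for the finitely many $i$, so some $N$ makes these gaps smaller than $\varepsilon$ for all $i$ and $n\ge N$; sandwiching $\p(\bm{\xi}_n\in A)$ between $\p(\bm{\xi}_n\in K_i^{-\delta})$ and $\p(\bm{\xi}_n\in K_i^{+\delta})$ and using $\mu(K_i^{+\delta})-\mu(K_i^{-\delta})\le\omega(\delta)<\varepsilon$ gives $|\p(\bm{\xi}_n\in A)-\mu(A)|\le C\varepsilon$ uniformly in convex $A$ and $n\ge N$; letting $\varepsilon\downarrow0$ concludes, and combining with the truncation from the first ingredient finishes the proof. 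I expect the main obstacle to be the uniform collar estimate, i.e.\ showing that an absolutely continuous law puts uniformly small mass on thin boundary shells of \emph{arbitrary} convex bodies in a fixed ball --- this is precisely where convexity (via Steiner's formula / Blaschke compactness) and the absolute-continuity hypothesis on $\bm{\xi}$ are indispensable; everything else is routine Portmanteau/compactness bookkeeping.
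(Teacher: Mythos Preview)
The paper does not prove this theorem at all: it is stated with a direct citation to Ranga Rao~\cite[Thms~3.4~\&~4.2]{RaoWeak} and used as a black box throughout. There is therefore no ``paper's own proof'' to compare against.

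Your sketch is a correct outline of the standard argument that convex sets form a $\mu$-uniformity class when $\mu$ is absolutely continuous, and this is essentially how Ranga Rao's original proof proceeds. One point deserves a little more care: the displayed Steiner-type inequality $\Leb(K^{+\delta})-\Leb(K^{-\delta})\le\kappa_d\big((R+\delta)^d-(R-\delta)^d\big)$ is not literally what Steiner's formula gives, because the inner parallel body does not obey a clean polynomial expansion (and for thin $K$ one may have $K^{-\delta}=\emptyset$). The robust way to get the uniform collar bound is via the monotonicity of surface area for nested convex bodies: for convex $K\subseteq\mathfrak{B}_{\bm{0}}(R)$ one has $\mathcal{H}^{d-1}(\partial K^{\pm s})\le\mathcal{H}^{d-1}(\partial\mathfrak{B}_{\bm{0}}(R+\delta))$, and integrating in $s$ over $[0,\delta]$ gives $\Leb(K^{+\delta}\setminus K^{-\delta})\le 2\delta\,d\kappa_d(R+\delta)^{d-1}$, which is the uniform Lebesgue bound you need before invoking absolute continuity of $\mu$. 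With that adjustment the rest of your argument (Blaschke compactness for the finite net, Portmanteau on the finitely many $K_i^{\pm\delta}$, sandwiching) goes through as written.
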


The next example demonstrates that the metrics $d_{\mathscr{C}}$ and $d_{\mathscr{K}}$ are not equivalent for $d>1$ in general. For two real functions $h,g:\R \to \R$, we write $h(x)=\Oh(g(x))$ as $x \to \infty$ if $\limsup_{x \to \infty}|h(x)|/g(x)<\infty$.

\begin{example}\label{ex:C_vs_K}
Let $d>1$ and $\bm{\xi}$ be uniformly distributed \emph{on} the closed unit circle $C=\{(x_1,\ldots,x_d)\in\R^d : x_1^2+x_2^2=1,\,x_3=\cdots=x_d=0\}$. For $n\in\N$, let $\bm{Y}_n\coloneqq (1+1/n)\bm{\xi}$. Clearly $d_\mathscr{C}(\bm{Y}_n,\bm{\xi})=1$ for all $n \in \N$, since, for the closed ball of radius one $A$ in $\R^d$, we have $\p(\bm{\xi} \in A)=1$ and 
$\p(\bm{Y}_n \in A)=0$. On the other hand, we have that $\lim_{n\to\infty}d_{\mathscr{K}}(\bm{Y}_n,\bm{\xi})=0$. Indeed, it suffices to compare the angles generated by the intersections of the rays $\{x\}\times(-\infty,y]$ and $(-\infty,x]\times\{y\}$ with the unit circle $C$ and the circle $(1+1/n)C$. Hence, it suffices to show that
\begin{align*}
\sup_{y \in (-1,1) }\big|\arctan \big( -y\big((1+1/n)^2-y^2\big)^{-1/2}\big)-\arctan \big( -y(1-y^2)^{-1/2}\big)\big| &\to 0, \quad \text{ and }\\
\sup_{x \in (-1,1)\setminus\{0\}}\big|\arctan \big(- x^{-1}\sqrt{(1+1/n)^2-x^2}\big)-\arctan \big( -x^{-1}\sqrt{1-x^2}\big)\big| &\to 0, \quad \text{ as } n \to \infty. 
\end{align*}
It can be shown that both terms in the display are of order $\Oh(1/\sqrt{n})$ as $n \to \infty$. Indeed, this follows after elementary calculations from the asymmetry of $\arctan$ (i.e. $\arctan(-x)=-\arctan(x)$) and the elementary inequalities
\[
\arctan(x+\delta)-\arctan(x)\le \frac{\pi\delta/2}{1+x^2}
\quad\text{and}\quad
\sqrt{x+\delta}-\sqrt{x}\le\sqrt{\delta},
\quad\text{for }\delta,x>0.\qedhere
\]
\end{example}

\begin{proof}[Proof of Proposition~\ref{prop:DoA}]
Let $\bm{X}$ be in the DoA of $\bm{Z}$. 
By definition $\bm{B}(t)^{-1}(\bm{X}_t-\bm{A}(t))\cid \bm{Z}$ as $t\to\infty$.
By Theorem~\ref{thm:appendix_conv_weak}  this weak convergence  implies the convergence in convex distance:
\[
d_\mathscr{C}(\bm{X}_t-\bm{A}(t),\bm{B}(t)\bm{Z})\to 0,
\quad\text{as }t\to\infty.
\]
Since $\bm{B}(t)$ is symmetric positive definite, we may consider its diagonalisation $\bm{B}(t)=\bm{P}(t)^\intercal \bm{\Lambda}(t)\bm{P}(t)$, where $\bm{P}(t)$ is an orthogonal matrix (i.e. $\bm{P}(t)^\intercal \bm{P}(t)=\bm{I}_d$) 
and $\bm{\Lambda}(t)$ is diagonal with positive diagonal elements. Since $\bm{P}(t)\bm{Z}\eqd\bm{Z}$, we have $d_\mathscr{C}(\bm{X}_t-\bm{A}(t),\bm{P}(t)^\intercal\bm{\Lambda}(t)\bm{Z})\to 0$. The $j$-th component of the limit satisfies 
$\bm{e}_j^\intercal \bm{P}(t)^\intercal\bm{\Lambda}(t)\bm{Z}\eqd |\bm{P}_j^\intercal(t)\bm\Lambda(t)| Z$, where $\bm{P}_j^\intercal=\bm{e}_j^\intercal \bm{P}^\intercal$ and $Z\sim N(0,1)$.
Denoting $A_j\coloneqq \bm{e}_j^\intercal\bm{A}$, the $j$-th component $X^j\coloneqq \bm{e}_j^\intercal\bm{X}$ of the L\'evy process $\bm{X}$ satisfies
\[
d_\mathscr{C}(X^j_t-A_j(t),\bm{P}_j^\intercal(t)\bm\Lambda(t)\bm{Z})
=d_\mathscr{C}(|\bm{P}_j^\intercal(t)\bm\Lambda(t)|^{-1}(X^j_t-A_j(t)),Z)
\to 0,
\quad\text{as }t\to\infty.
\]
The component $X^j$ is in  DoA of $Z$ and thus, by~\cite[Thm~6.18]{MR4226142},  $L_j(x)=\E[|X^j_1|^2\1_{\{|X_1^j|\le x\}}]$ varies slowly at infinity.
Since, for any $p\in[0,2)$, we have
\[
\E\big[|X_1^j|^p\1_{\{x<|X_1^j|\le y\}}\big]
\le x^{p-2}\E\big[|X^j_1|^2\1_{\{x<|X^j_1|\le y\}}\big]
=x^{p-2}(L_j(y)-L_j(x)),\quad 0<x<y,
\]
the following inequalities hold for arbitrary $j\in\{1,\ldots,d\}$:
\[
\E\big[|X^j_1|^p\big]
\le 1+\E\big[|X^j_1|^p\1_{\{1<|X^j_1|\}}\big]
\le 1+\sum_{n=0}^\infty 2^{n(p-2)}\bigg(\frac{L_j(2^{n+1})}{L_j(2^{n})}-1\bigg)L_j(2^n)<\infty.
\]
The sum is finite since $L_j$ is slowly varying: the limit $L_j(2^{n+1})/L_j(2^{n})\to 1$ holds and Potter's bound~\cite[Thm~1.5.6]{BinghamBook} yields $L_j(2^n)\le C_j 2^{(1-p/2)n}$ for some constant $C_j>0$ and all $n> 0$. Hence $\E[|\bm{X}_1|^p]<\infty$.

It remains to show that $\E[|\bm{X}_1|^2]<\infty$ if and only if $\bm{X}$ is in the DoNA of $\bm{Z}$. 
If $\E[|\bm{X}_1|^2]<\infty$, then 
$\trace(\bm{\sigma}^2)=\E[|\bm{X}_1|^2]<\infty$ and we may set
$\bm{B}(t)\coloneqq\sqrt{t}\bm{\sigma}$ and $\bm{A}(t)\coloneqq t\E[\bm{X}_1]$. The classical CLT implies  
$\bm{X}$ is in the DoNA of $\bm{Z}$. 
For the converse, suppose $\limsup_{t\to\infty}t^{-1/2}\trace(\bm{B}(t))<\infty$. Since $|\bm{P}_j(t)\bm\Lambda(t)|\le \trace(\bm\Lambda(t))=\trace(\bm{B}(t))$, we have
$\limsup_{t\to\infty}t^{-1/2}|\bm{P}_j^\intercal(t)\bm\Lambda(t)|<\infty$. 
If every $L_j$ had a finite limit,  $\E[|\bm{X}_1|^2]=\lim_{x\to\infty}\sum_{j=1}^d L_j(x)<\infty$, as desired. Suppose this is not the case: $L_j(x)\to\E[|X_1^j|^2]=\infty$ as $x\to\infty$ for some $j$. However, $X^j_1$ has infinite variance and is in the DoA of $Z$ since $d_\mathscr{C}(|\bm{P}_j^\intercal(t)\bm\Lambda(t)|^{-1}(X^j_t-A_j(t)),Z)\to 0$ as $t\to\infty$. This weak convergence (along integers, i.e. for $t=n\in\N$)   
requires $\lim_{n\to\infty}n^{-1/2}|\bm{P}_j^\intercal(n)\bm\Lambda(n)|=\infty$ by~\cite[Ch.~XVII,~Eq.~(5.23)]{MR270403}, a contradiction. This completes the proof.
\end{proof}

\end{document}